\documentclass{amsart}

\usepackage{amsmath,amssymb,enumerate}
\usepackage{epsfig}

\newtheorem{theorem}{Theorem}

\newtheorem{conjecture}{Conjecture}
\newtheorem{example}{Example}
\newtheorem{proposition}{Proposition}
\newtheorem{lemma}[proposition]{Lemma}
\newtheorem{corollary}[proposition]{Corollary}
\newtheorem{fact}[proposition]{Fact}

\newtheorem{definition}[proposition]{Definition}
\newtheorem{remark}[proposition]{Remark}

%\numberwithin{proposition}{section}
%\numberwithin{equation}{section}

\newcommand{\occult}[1]{}

\newcommand{\new}[1]{{\bf #1}}

\newcommand\eps{\epsilon}

\newcommand\lip{{\operatorname{lip}}}
\newcommand\Lip{{\operatorname{Lip}}}
\newcommand\NN{{\mathbb N}}
\newcommand\RR{{\mathbb R}}
\renewcommand\top{{\operatorname{top}}}
\newcommand\TT{{\mathbb T}}
\newcommand\vol{{\operatorname{vol}}}
\newcommand\ZZ{{\mathbb Z}}

\begin{document}

\title{Dimensional Entropies and\\
Semi-Uniform Hyperbolicity}
\date{Talk at ICMP - Rio de Janeiro, Brazil, August 2006}

\begin{abstract}
We describe \emph{dimensional entropies} introduced in \cite{BuzziSMF},
list some of their properties, giving some proofs.
These entropies allowed the definition in \cite{BuzziEE,BuzziSaintFlour} 
of \emph{entropy-expanding maps}. We introduce a new notion of
\emph{entropy-hyperbolicity} for diffeomorphisms. We indicate some
simple sufficient conditions (some of them new) for these properties.
We conclude by some work in progress and more questions.
%
%We explain how these forms of "semi-uniform" hyperbolicity can allow extensions
%of results about measures maximizing entropy to higher dimensions. 
\end{abstract}

\author{J\'er\^ome Buzzi}

\maketitle

%\tableofcontents

\section{Introduction}

We are interested in using robust entropy conditions to study chaotic dynamical
systems. These entropy conditions imply some "semi-uniform" hyperbolicity. This is
a type of hyperbolicity which is definitely weaker than classical uniform hyperbolicity
but which is stronger than Pesin hyperbolicity, that is, non vanishing of the
Lyapunov exponents of some relevant measure. This type of conditions allows the
generalization of some properties of interval maps 
and surface diffeomorphisms to arbitrary dimensions.

In this paper, we first explain what is known in low dimension just assuming the
non-vanishing of the topological entropy $h_\top(f)$. Then we give a detailed description of the
\emph{dimensional entropies}. These are $d+1$ numbers, if $d$ is the dimension
of the manifold,
 $$
  0 = h^0_\top(f)\leq h^1_\top(f)\leq\dots\leq h^d_\top(f)=h_\top(f).
 $$ 
$h^k_\top(f)$ "counts" the number of orbits starting from an arbitrary compact and smooth
$k$-dimensional submanifold.
We both recall known properties and establish new ones. We then recall the definition of \emph{entropy-expanding maps} which
generalize the complexity of interval dynamics with non-zero topological entropy. We also introduce
a similar notion for diffeomorphisms:

\begin{definition}
A diffeomorphism of a $d$-dimensional manifold is \new{entropy-hyperbolic} if
there are integers $d_u,d_s$ such that:
 \begin{itemize}
  \item $h^{d_u}_\top(f)=h_\top(f)$ and this fails for every dimension $k<d_u$;
  \item $h^{d_s}_\top(f^{-1})=h_\top(f)$ and this fails for every dimension $k<d_s$;
  \item $d_u+d_s=d$.
 \end{itemize}
\end{definition}

We give simple sufficient conditions for entropy-expansion and entropy-hyperbolicity.
Finally we announce some work in progress and state a number of questions.

\medbreak

We now recall some classical notions which may be found in \cite{KH}. 

A basic measure of orbit complexity of a map $f:M\to M$ is the \emph{entropy}. 
The \emph{topological entropy} $h_\top(f)$  "counts" all the orbits
and the \emph{measure-theoretic entropy} (also known as Kolmogorov-Sinai entropy
or ergodic entropy) $h(f,\mu)$ "counts" the orbits "relevant" to some given invariant
probability measure $\mu$. They are related by the following rather general variational 
principle. If, e.g., $f$ is continuous and $M$ is compact, then
 $$
    h_\top(f) = \sup_\mu h(f,\mu)
 $$
where $\mu$ ranges over all invariant probability measures. One can also restrict
$\mu$ to \emph{ergodic} invariant probability measures. 

This brings to the fore measures which realize the above supremum, when they
exist, and more generally measures which have entropy close to this supremum.

As $\mu\mapsto h(f,\mu)$
is affine, $\mu$ has maximum entropy if and only if almost every ergodic component
of it has maximum entropy. Hence, with respect to entropy, it is enough to study 
ergodic measures.

\begin{definition}
A {\bf maximum measure} is an ergodic and invariant probability measure $\mu$ such
that $h(f,\mu)=\sup_\nu h(f,\nu)$.

A {\bf large entropy measure} is  an ergodic and invariant probability measure $\mu$ such
that $h(f,\mu)$ is close to $\sup_\nu h(f,\nu)$.
\end{definition}

The \emph{Lyapunov exponents} for some ergodic and invariant probability measure $\mu$
are the possible values $\mu$-a.e. of the limit $\lim_{n\to\infty}\frac1n\log\|T_xf^n.v\|$
where $\|\cdot\|$ is some Riemannian structure and $T_xf$ is the differential of
$f$ and $v$ ranges over the non-zero vectors of the tangent space $T_xM$.

A basic result connecting entropy and hyperbolicity is
the following theorem (proved by Margulis for volume preserving flows):

\begin{theorem}[Ruelle's inequality]
Let $f:M\to M$ be a $C^1$ map on a compact manifold. 
Let $\mu$ be an $f$-invariant ergodic probability measure.
Let $\lambda_1(\mu)\geq\dots$ be its Lyapunov exponents 
repeated according to multiplicity. Then,
 $$
   h(f,\mu) \leq \sum_{i=1}^d \lambda_i(\mu)^+
 $$
\end{theorem}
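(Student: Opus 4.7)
The plan is to combine a partition-based upper bound on the entropy with a geometric estimate on the size of forward images of small balls, the latter coming from Oseledets' theorem and the $C^1$ regularity of $f$.

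First I would reduce to controlling a single conditional entropy term. For any finite partition $\mathcal{P}$ of $M$, one has the elementary identity $H_\mu(\bigvee_{k=0}^{n-1} f^{-k}\mathcal{P}) = H_\mu(\mathcal{P}) + \sum_{k=1}^{n-1} H_\mu(f^{-k}\mathcal{P} \mid \bigvee_{j=0}^{k-1} f^{-j}\mathcal{P})$, and by $f$-invariance of $\mu$ together with the fact that conditioning on a finer $\sigma$-algebra decreases conditional entropy, each summand is bounded by $H_\mu(f^{-1}\mathcal{P} \mid \mathcal{P})$. Applied to $g = f^N$ and divided by $n$, this yields $h(f^N,\mu,\mathcal{P}) \leq H_\mu(f^{-N}\mathcal{P} \mid \mathcal{P})$. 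Since $h(f,\mu) = h(f^N,\mu)/N$, and since for $\mathcal{P}$ fine enough $h(f^N,\mu,\mathcal{P})$ is close to $h(f^N,\mu)$, it suffices to prove that
\[
\frac{1}{N}\, H_\mu(f^{-N}\mathcal{P} \mid \mathcal{P}) \longrightarrow \sum_{i=1}^d \lambda_i(\mu)^+
\]
as $N \to \infty$, up to an error that vanishes when the mesh of $\mathcal{P}$ tends to $0$.

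Next I would choose $\mathcal{P}$ to be a partition of $M$ into Borel cells of diameter at most $\epsilon$ with piecewise smooth boundaries, so that $H_\mu(f^{-N}\mathcal{P}\mid\mathcal{P}) \leq \sum_{P} \mu(P) \log \#\{Q\in\mathcal{P} : f^N(P)\cap Q \neq \emptyset\}$. The crucial geometric step is then to bound, for $\mu$-a.e. $x$, the number of cells of $\mathcal{P}$ intersected by $f^N(B(x,\epsilon))$. By Oseledets' theorem and the $C^1$ regularity of $f$, for any $\eta>0$ one has, for $x$ in a suitable Pesin-type regular set and for $N$ large, that $f^N(B(x,\epsilon))$ is contained in an ellipsoid whose semi-axes are at most $\epsilon\, e^{N(\lambda_i(\mu)+\eta)}$. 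The number of $\epsilon$-cells needed to cover such an ellipsoid is $O\!\bigl(\prod_i \max(e^{N(\lambda_i+\eta)},1)\bigr) = O\!\bigl(\exp\!\bigl(N\sum_i (\lambda_i+\eta)^+\bigr)\bigr)$, which is exactly the bound we want up to the $\eta$-slack.

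The main obstacle lies in making this geometric estimate uniform enough to be integrated against $\mu$. Oseledets' convergence is only $\mu$-almost everywhere and not uniform, so I would invoke Egorov's theorem to extract a compact set $K$ of $\mu$-measure at least $1-\delta$ on which the convergence rates and the norms $\|(Tf^N)^{\pm 1}\|$ are controlled uniformly. On $K$ the ellipsoidal cover gives the desired estimate. On the complement one falls back on the crude bound coming from $\|Tf\|_\infty$, contributing at most $\delta\cdot O(N\log\|Tf\|_\infty)$. Dividing by $N$, letting $N\to\infty$, and then sending $\eta,\delta,\epsilon \to 0$ eliminates all error terms and yields $h(f,\mu) \leq \sum_i \lambda_i(\mu)^+$. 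The subtle point, and the one to be handled with care, is that the exceptional set may interact badly with the partition boundaries; this can be addressed by choosing $\mathcal{P}$ generic with respect to $\mu$ (so that $\mu(\partial\mathcal{P})=0$) and by absorbing boundary effects into the error term $O(\eta)$.
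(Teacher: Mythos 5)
The paper does not give a proof of Ruelle's inequality; it simply cites it as a classical theorem (Ruelle, with the volume-preserving flow case due to Margulis) and uses it as a black box, so there is no paper proof to compare against. Your proposal is the standard textbook argument (Ruelle's original proof, as found e.g.\ in Ma\~n\'e or Katok--Hasselblatt) and is essentially correct: reduce to a single conditional entropy $H_\mu(f^{-N}\mathcal P\mid\mathcal P)$, bound it cell by cell via the number of cells met by $f^N(P)$, and estimate that count by the singular values of $T_xf^N$ on an Egorov/Oseledets set, with a crude $\|Tf\|_\infty$ bound on the small-measure complement.

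Two points deserve sharpening. First, the statement that $f^N(B(x,\epsilon))$ lies in an ellipsoid with semi-axes $\le \epsilon\,e^{N(\lambda_i+\eta)}$ is not quite what the multiplicative ergodic theorem gives: the semi-axes of $T_xf^N(B(0,\epsilon))$ are the singular values $\epsilon\,\sigma_i(T_xf^N)$, and these need not be individually controlled by $e^{N\lambda_i}$. What Oseledets does control, uniformly on the Egorov set, is each exterior-power norm $\|\Lambda^\ell T_xf^N\|=\sigma_1\cdots\sigma_\ell$; since the count of $\epsilon$-cells covering the ellipsoid is $O\bigl(\prod_i\max(\sigma_i,1)\bigr)=O\bigl(\max_\ell\|\Lambda^\ell T_xf^N\|\bigr)$, only these products are needed, and the argument survives, but it should be phrased via $\Lambda^\ell Tf^N$ rather than via individual exponents. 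Second, the order of limits needs care: the linearization error of $f^N$ over a ball of radius $\epsilon$ is controlled by the modulus of continuity of $Tf^N$, so $\epsilon$ must be taken small \emph{after} $N$ is fixed; the correct scheme is to fix $\eta,\delta$, fix $N$, let the mesh $\epsilon\to0$ (using that $h(f,\mu,\mathcal P)\to h(f,\mu)$ and that the cell-count bound holds for all sufficiently small $\epsilon$), obtaining $h(f,\mu)\le\sum_i\lambda_i^++C(d,\eta)+\delta\cdot d\log^+\|Tf\|_\infty+O(1/N)$, and only then let $N\to\infty$ and $\eta,\delta\to0$. With these adjustments your sketch is a complete and correct proof.
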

In good cases (with enough hyperbolicity), the entropy is also reflected in
the existence of many periodic orbits:

\begin{definition}
The periodic points of some map $f:M\to M$ satisfy a {\bf multiplicative lower bound}, 
if, for some integer $p\geq1$:
 $$
   \liminf_{n\to\infty,p|n} e^{-n h_\top(f)} \#\{x\in[0,1]: f^nx=x\} > 0.
 $$
\end{definition}

Recall that many diffeomorphisms have infinitely many more periodic orbits (see
\cite{Kaloshin,KaloshinHunt}).

The following type of isomorphism will be relevant to describe all
"large entropy measures".

\begin{definition}
For a given measurable dynamical system $f:M\to M$, a subset $S\subset M$ is
\new{entropy-negligible} if there exists $h<\sup_\mu h(f,\mu)$ such that for
all ergodic and invariant probability measures $\mu$ with $h(f,\mu)>h$,
$\mu(S)=0$.

An {\bf entropy-conjugacy} between two measurable dynamical systems $f:M\to M$
and $g:N\to N$ is a bi-measurable invertible mapping $\psi:M\setminus M_0\to N\setminus
N_0$ such that: $\psi$ is a conjugacy (i.e., $g\circ\psi=\psi\circ f$) and 
$M_0$ and $N_0$ are entropy-negligible.
\end{definition}

\section{Low Dimension}

Low dimension dynamical systems here means interval maps
and surface diffeomorphisms - those systems for which non-zero
entropy is enough to ensure hyperbolicity of the large entropy
measures.

\subsection{Interval Maps}

Indeed, an immediate consequence of Ruelle's inequality on the interval 
is that a lower bound on the measure-theoretic entropy gives a lower bound on the (unique)
Lyapunov exponent. Thus, invariant measures with nonzero topological
entropy are hyperbolic in the sense of Pesin. One can obtain much more from the
topological entropy:

\begin{theorem}
Let $f:[0,1]\to[0,1]$ be $C^\infty$. If $h_\top(f)>0$
then $f$ has finitely many maximum measures. Also the periodic points satisfy a multiplicative lower
bound.
\end{theorem}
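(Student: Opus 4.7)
The plan is to reduce the smooth interval dynamics to a countable-state symbolic system, where results from the theory of countable Markov shifts give both conclusions in a unified way. The symbolic model of choice is Hofbauer's Markov diagram (Markov extension): one builds a countable-state topological Markov shift $\hat f:\hat I\to\hat I$ together with a factor map $\pi:\hat I\to [0,1]$ semiconjugating $\hat f$ to $f$. The extension is designed so that invariant measures of sufficiently large entropy lift uniquely: this uses Ruelle's inequality (any measure with $h(f,\mu)>0$ has nonzero Lyapunov exponent, hence is "hyperbolic" in one dimension) together with the fact that hyperbolic measures do not see the "boundary" of the partition used to build the diagram.

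Once measures are lifted, I would invoke the Gurevich--Sarig theory of countable topological Markov shifts. Decompose $\hat f$ into its transitive components $(\hat I_\alpha)$. On each transitive component, there is at most one measure of maximal entropy, and it exists exactly when the component is positive recurrent with Gurevich entropy equal to its supremum over invariant measures. Moreover, positive recurrence of a component with maximal entropy gives the multiplicative lower bound $\liminf_{n\to\infty,\,p|n} e^{-nh_\top(f)}\,\#\{x:\hat f^n x = x\text{ in }\hat I_\alpha\}>0$ (a Gurevich--Sarig renewal estimate), and these periodic points project down to distinct periodic points of $f$ for large enough $n$.

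It remains to prove the \emph{finiteness} of the transitive components of $\hat I$ whose Gurevich entropy equals $h_\top(f)$, and to verify positive recurrence of at least one of them. This is where the $C^\infty$ hypothesis enters decisively, via Yomdin's theorem on volume growth: Yomdin's estimates imply that the "local" or "tail" entropy $h^*(f)$ of $f$ vanishes, and hence the entropy functional $\mu\mapsto h(f,\mu)$ is upper semi-continuous (Newhouse). Transferred to the Markov extension, this smoothness input forces all but finitely many components to have Gurevich entropy strictly less than $h_\top(f)$ (otherwise one could build weak-$*$ limits of measures carried by deeper and deeper levels of the tower, producing a limit measure with entropy $h_\top(f)$ supported where the lifted dynamics is no longer defined, contradicting upper semi-continuity) and forces positive recurrence on the finitely many that remain.

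The main obstacle is this last step: controlling the entropy contribution of "infinity" in the Hofbauer tower. Upper semi-continuity is the right conceptual tool, but combining it with the combinatorial structure of the Markov diagram to rule out maximal-entropy behavior at infinity, and to promote null recurrence to positive recurrence, is the technical heart of the argument. Once these finiteness and recurrence properties are in hand, the two conclusions of the theorem — finitely many maximum measures, and the multiplicative periodic orbit lower bound — fall out directly from the corresponding statements on each of the finitely many maximal, positive recurrent transitive components.
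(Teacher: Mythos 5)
Your proposal matches the paper's approach exactly: the paper (citing \cite{Hofbauer1,Hofbauer2} and \cite{BuzziSIM}) proves the theorem by building an entropy-conjugacy to a countable-state Markov shift (Hofbauer's diagram, extended to the $C^\infty$ non--piecewise-monotone case via Yomdin/tail-entropy control) and then applying the Gurevi\v{c}/Vere-Jones theory of such shifts. You correctly identify all the key ingredients, including where the $C^\infty$ hypothesis enters (vanishing tail entropy $\Rightarrow$ upper semi-continuity $\Rightarrow$ finiteness and positive recurrence of the maximal-entropy components), so there is nothing to add.
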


This was first proved by F. Hofbauer \cite{Hofbauer1,Hofbauer2} for
piecewise monotone maps (admitting finitely many points $a_0=0<a_1<
\dots<a_N$ such that $f|]a_i,a_{i+1}[$ is continuous and monotone).
It was then extended to arbitrary $C^\infty$ maps in \cite{BuzziSIM}.
In both settings, one builds an entropy-conjugacy to a combinatorial 
model called a Markov shift (which is a subshift of finite type over
an infinite alphabet). One can then apply some results of D. Vere-Jones
\cite{VJ} and B. Gurevi\v{c} \cite{Gurevic1}.

\medbreak

We can even classify these dynamics. Recall that the \emph{natural extension}
of $f:M\to M$ is $\bar f:\bar M\to\bar M$ defined as $\bar M:=\{(x_n)_{n\in\ZZ}\in
M^\ZZ:\forall n\in\ZZ\; x_{n+1}=f(x_n)\}$ and $\bar f((x_n)_{n\in\ZZ})=(f(x_n))_{n\in\ZZ}$.
Recall that $\bar\pi:(x_n)_{n\in\ZZ}\mapsto x_0$ induces a homeomorphism 
between the spaces of invariant probability measures which respects entropy and ergodicity.

\begin{theorem}
The natural extensions of $C^\infty$ interval maps with
non-zero topological entropy are classified up
to entropy-conjugacy by their topological entropy and finitely many
integers (which are "periods" of the maximum measures).
\end{theorem}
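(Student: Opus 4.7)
The plan is to reduce everything to countable Markov shifts via the previous theorem, and then to classify their natural extensions up to measurable isomorphism using the Vere-Jones/Gureviĉ theory together with Ornstein's theorem. First, by the preceding theorem applied to each $C^\infty$ interval map $f$ with $h_\top(f)>0$, I obtain an entropy-conjugacy $\psi_f$ between $f$ and a countable-state Markov shift $\Sigma_f$. Entropy-conjugacies between $f$ and $g$ lift to entropy-conjugacies between the natural extensions $\bar f$ and $\bar g$ (the exceptional sets lift under $\bar\pi$ to entropy-negligible sets because $\bar\pi$ is a measure-preserving, entropy- and ergodicity-respecting factor map). Hence it suffices to classify the natural extensions $\bar\Sigma_f$ up to entropy-conjugacy.

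Next, I decompose $\Sigma_f$ into its irreducible components. By Vere-Jones/Gureviĉ, the large-entropy ergodic measures are all supported by the finitely many irreducible components whose Gureviĉ entropy equals $h_\top(f)$, and each such component carries a unique maximum measure with a well-defined period $p_i$ (the gcd of loop lengths through any chosen state). The union of the remaining components, along with the non-maximal measures' support, has all its ergodic measures of strictly smaller entropy, and so forms an entropy-negligible set. Thus up to entropy-conjugacy the natural extension $\bar\Sigma_f$ splits as a finite disjoint union of natural extensions of irreducible, positive-recurrent Markov shifts, each labeled by its period.

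On each such irreducible component the Friedman-Ornstein theorem implies that, after passing to the $p_i$-fold iterate and restricting to a cylinder on a base state, the maximum-measure system is measurably isomorphic to a Bernoulli shift of entropy $p_i h_\top(f)$; Ornstein's isomorphism theorem then identifies any two such Bernoulli factors with the same entropy, and the $p_i$-cyclic structure is reassembled canonically from the period. Consequently, the natural extensions $\bar\Sigma_f$ and $\bar\Sigma_g$ are entropy-conjugate as soon as they have the same topological entropy and the same multiset of periods $(p_1,\dots,p_r)$; conversely, entropy-conjugacy preserves topological entropy (trivially) and the set of periods of maximum measures (since these are invariants of the ergodic decomposition of large-entropy measures).

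The main obstacle is the bookkeeping of entropy-negligible sets through the chain of reductions: one must verify that discarding the non-maximal-entropy irreducible components, then passing through Friedman-Ornstein and Ornstein isomorphisms piece by piece, and finally patching the resulting isomorphisms across the $r$ components of $\bar\Sigma_f$ and $\bar\Sigma_g$, produces a single bi-measurable conjugacy whose complement on both sides is genuinely entropy-negligible. The cleanest way to handle this is to observe that a finite disjoint union of entropy-negligible sets is entropy-negligible, so the patching is automatic once the componentwise isomorphisms are in place.
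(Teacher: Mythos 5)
Your proposal correctly identifies the overall reduction to countable-state Markov shifts and the role of the spectral decomposition, but the last step contains a genuine gap that the paper's citation to \cite{BBG} is precisely designed to fill, and that Ornstein theory alone cannot.

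The issue is the strength of the equivalence. An entropy-conjugacy is a \emph{single} bi-measurable bijection $\psi:\bar M\setminus M_0\to \bar N\setminus N_0$, where $M_0,N_0$ are required to be negligible \emph{simultaneously for every ergodic invariant measure of entropy above some threshold $h<h_\top$}. Friedman--Ornstein and Ornstein's isomorphism theorem produce, for each maximum measure $\mu$, a measure-theoretic isomorphism defined only up to $\mu$-null sets; the set where this map fails to be defined or injective may well carry positive mass for other large-entropy measures (e.g.\ equilibrium states of H\"older potentials, or measures supported on the many periodic orbits). So patching componentwise Ornstein isomorphisms does not produce an entropy-conjugacy: you would only get a ``maximum-measure conjugacy,'' which is a weaker invariant. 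Concretely, two mixing positive-recurrent Markov shifts of the same Gurevi\v{c} entropy always have Bernoulli MMEs of the same entropy by Ornstein, yet they need not be entropy-conjugate, because their families of high-entropy measures can differ.

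The paper instead invokes the Boyle--Buzzi--G\'omez result \cite{BBG}, which classifies (suitable, strongly positive recurrent) countable Markov shifts up to \emph{almost isomorphism}: a Borel conjugacy off a set that is null for all ergodic measures giving full mass to the recurrent part, hence in particular for all large-entropy measures. That is the correct structural tool; it yields a single Borel map working for the whole cone of high-entropy measures at once, which is what entropy-conjugacy demands. Your decomposition into maximal-entropy irreducible components and your handling of the lift $\bar\pi$ and the finite union of negligible sets are fine; the replacement of almost isomorphism by Ornstein's theorem is where the argument breaks.
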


The classification is deduced from the proof of the previous theorem
by using a classification result \cite{BBG} for the invertible Markov shifts
involved.
 
\medbreak

The $C^\infty$ is necessary: for each finite $r$,
there are $C^r$ interval maps with non-zero topological entropy having
infinitely many maximum measures and others with none.

\begin{remark}
These examples show  in particular that Pesin hyperbolicity of maximum measures or even of
large entropy measures (which are both consequences of Ruelle's
inequality here) are not enough to ensure the finite number of maximum
measures.
\end{remark}

\subsection{Surface Transformations}

As observed by Katok \cite{KatokPeriodic}, Ruelle's inequality
applied to a surface diffeomorphism and its inverse (which has
opposite Lyapunov exponents) shows that a lower-bound on measure-theoretic
entropy bounds away from zero the Lyapunov exponents of the measure.
Thus, for surface diffeomorphisms also, nonzero entropy implies
Pesin hyperbolicity.

It is believed that surface diffeomorphisms should behave as
interval maps, leading to the following folklore conjecture:

\begin{conjecture}\label{conj:surface}
Let $f:M\to M$ be a $C^\infty$ surface diffeomorphism. If
$h_\top(f)>0$ then $f$ has finitely many maximum measures.
\end{conjecture}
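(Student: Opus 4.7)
The plan is to imitate the interval-map strategy as closely as possible: produce an entropy-conjugacy between $f$ (or rather its natural extension) and a countable-alphabet topological Markov shift, then invoke Vere-Jones/Gurevi\v c theory to conclude that there are only finitely many maximum measures. The key input that makes this reasonable is Katok's observation recalled above: on a surface, any ergodic measure with $h(f,\mu)>0$ has one positive and one negative Lyapunov exponent bounded away from zero by a constant depending only on a lower bound on the entropy. So every ``large entropy measure'' in the sense of the paper is uniformly Pesin-hyperbolic, and the analysis can be confined to a Pesin block where stable and unstable manifolds have definite size.

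First I would fix $h<h_\top(f)$ and consider the set $\mathcal M_h$ of ergodic invariant measures with $h(f,\mu)>h$. Using Katok's approximation by horseshoes and the uniform hyperbolicity on Pesin blocks, I would build a countable cover of a full-measure set for every $\mu\in\mathcal M_h$ by Pesin rectangles whose boundaries are pieces of stable/unstable manifolds. The core technical step is to refine this cover into a genuine countable Markov partition: one needs the usual Bowen/Sinai ``proper family'' construction carried out with the non-uniform Pesin estimates, producing a locally finite coding $\pi:\Sigma\to M$ by a countable Markov shift $\Sigma$ whose topological entropy equals $h_\top(f)$ and whose image is $\mathcal M_h$-full-measure, i.e.\ the uncoded set is entropy-negligible. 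This gives an entropy-conjugacy between the natural extension $\bar f$ and $\sigma:\Sigma\to\Sigma$.

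Once this is in place, I would check that $\Sigma$ is positively recurrent in the sense of Gurevi\v c/Sarig: every large entropy ergodic measure of $\sigma$ is carried by an irreducible component of Gurevi\v c entropy equal to $h_\top(f)$, and there are only finitely many such components because the collection of irreducible Markov shifts of a fixed Gurevi\v c entropy that embed into $\Sigma$ is tight (compare with \cite{BBG}). Applying Vere-Jones/Gurevi\v c to each yields at most one maximum measure per component, hence finitely many in total, which then pushes down to finitely many maximum measures for $f$ via the entropy-conjugacy.

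The main obstacle is unquestionably the construction of the countable Markov partition itself. Unlike in the interval-map setting, the ``natural'' combinatorial model is not handed to us by a monotonicity structure; one must manufacture a Markov coding from non-uniformly hyperbolic data, making uniform choices of local stable/unstable manifolds, transversals, and return times across all of $\mathcal M_h$ simultaneously, and then prove that the set of points that fail to be coded is entropy-negligible. A secondary but nontrivial difficulty is controlling the tail of the partition well enough to apply the positive-recurrence criteria, and handling the possible loss of ergodicity when passing from the shift back to $f$ through the projection $\pi$. Once these are overcome, the rest of the argument is a direct transcription of the interval-map proof.
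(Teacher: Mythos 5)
This statement is \emph{not a theorem in the paper}: it is explicitly labeled a conjecture, introduced with the words ``It is believed that surface diffeomorphisms should behave as interval maps, leading to the following folklore conjecture,'' and later in Section~7 the author writes that the surface case ``is still open (see Conjecture~\ref{conj:surface}), despite the result on a toy model.'' So there is no proof of this statement in the paper for your attempt to be compared against, and your writeup should not be read as a proof but as a research program.

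As a program, what you have written is the natural strategy and closely parallels the known interval-map argument and the piecewise-affine toy model of \cite{BuzziPWAH}. But the step you flag as ``the main obstacle'' is not an obstacle inside an otherwise complete argument --- it \emph{is} the open problem. Producing a countable Markov shift entropy-conjugate to $\bar f$ from purely non-uniform Pesin data (with uniform control of stable/unstable sizes only on Pesin blocks of each $\mu\in\mathcal M_h$, and no a priori compatibility of these blocks across different measures), and then showing the uncoded set is entropy-negligible, is precisely the content that would turn the conjecture into a theorem; Katok's horseshoe theorem gives approximation in entropy but not a single coding capturing all large-entropy measures simultaneously. Likewise, the finiteness of irreducible components attaining the Gurevi\v c entropy is not automatic for a countable Markov shift (there are countable shifts with infinitely many components of the same entropy, or none that attains it); in the interval case this finiteness is extracted from the specific combinatorics of the Hofbauer tower, and you would need an analogous structural fact for whatever coding you build. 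In the paper's framework this is exactly where the entropy-expansion/entropy-hyperbolicity hypothesis and the boundary-entropy bound $H^{d-1}_\top(f)$ enter for maps (Theorem~\ref{thm-hexp}); on surfaces the hyperbolicity is automatic from Ruelle and Katok, but the Markov structure is not, and that gap is what leaves Conjecture~\ref{conj:surface} open in this paper.
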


I would think that, again like for interval maps, finite
smoothness is not enough for the above result. However
counter-examples to this (or to existence) are known only
in dimension $\geq4$ \cite{Misiurewicz}.

The best result for surface diffeomorphisms at this point is
the following "approximation in entropy" \cite{KatokPeriodic}:

\begin{theorem}[A. Katok]\label{thm:Katok}
Let $f:M\to M$ be a $C^{1+\eps}$ surface diffeomorphism. For
any $\eps>0$, there exists a horseshoe\footnote{A horseshoe is an invariant
compact subset on which some iterate of $f$ is conjugate with a full shift on
finitely many symbols.} $\Lambda\subset M$ such
that $h_\top(f|\Lambda)>h_\top(f)-\eps$. In particular, the
periodic points of $f$ satisfy a logarithmic lower bound:
 $$
   \limsup_{n\to\infty} \frac1n\log\#\{x\in M:f^n(x)=x\} \geq h_\top(f).
 $$
\end{theorem}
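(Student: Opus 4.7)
The plan is to reduce to the hyperbolic ergodic case via the variational principle and Ruelle's inequality, then invoke Pesin theory to obtain uniform hyperbolicity on a set of positive measure, and finally carry out a combinatorial/geometric construction of a horseshoe on that set. The corollary on periodic points follows cheaply from the horseshoe structure.

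\textbf{Step 1: Reduction to a hyperbolic ergodic measure.} By the variational principle, for each $\eps>0$ there is an ergodic invariant probability measure $\mu$ with $h(f,\mu)>h_\top(f)-\eps/2$. Since $M$ is a surface, $\mu$ has only two Lyapunov exponents $\lambda_1\geq\lambda_2$. Applying Ruelle's inequality to $f$ gives $h(f,\mu)\leq\lambda_1^+$, and applying it to $f^{-1}$ (whose exponents are $-\lambda_2\geq-\lambda_1$) gives $h(f,\mu)=h(f^{-1},\mu)\leq(-\lambda_2)^+$. Thus $\lambda_1\geq h(f,\mu)>0$ and $\lambda_2\leq-h(f,\mu)<0$, so $\mu$ is a hyperbolic ergodic measure with exponents bounded away from zero.

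\textbf{Step 2: Pesin blocks.} Because $f$ is $C^{1+\eps}$ and $\mu$ is hyperbolic, Pesin theory gives a nested sequence of compact Pesin regular sets $\Lambda_\ell$ on which the hyperbolic splitting is continuous, the local stable and unstable manifolds $W^s_{\rm loc}(x),W^u_{\rm loc}(x)$ have uniform size and uniform $C^{1}$ geometry, the contraction/expansion rates are uniform on each level, and $\mu(\Lambda_\ell)\to 1$. Fix $\ell$ large so that $\mu(\Lambda_\ell)>1/2$.

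\textbf{Step 3: Counting separated orbits returning to a Pesin block.} This is the technical heart. Using Brin--Katok's local entropy formula (or a direct Shannon--McMillan--Breiman argument), the number $N(n,\delta)$ of $(n,\delta)$-separated points whose entire orbit spends a fixed fraction of time in $\Lambda_\ell$ satisfies
\begin{equation*}
\liminf_{n\to\infty}\tfrac1n\log N(n,\delta)\ \geq\ h(f,\mu)-\eps/2
\end{equation*}
for some $\delta=\delta(\ell)>0$. By Poincaré recurrence (or the Birkhoff theorem applied to $1_{\Lambda_\ell}$), a positive-density subset of these orbits both begin and end inside $\Lambda_\ell$.

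\textbf{Step 4: From separated orbits to a horseshoe.} Among the $N(n,\delta)$ points one selects a large sub-collection $E_n$ whose initial and terminal points lie in a single small ball of $\Lambda_\ell$ of diameter much smaller than the local stable/unstable manifolds. The uniform size of $W^{s,u}_{\rm loc}$ on $\Lambda_\ell$ together with a $\lambda$-lemma argument produces, for each $x\in E_n$, an approximate periodic orbit of period $n$; the Pesin shadowing (closing) lemma then yields a genuine periodic point $p_x$ near $x$. The local product structure on $\Lambda_\ell$ ensures that the local unstable manifold through $f^np_x$ transversally intersects the local stable manifold through $p_x$, giving a Smale-type horseshoe $\Lambda$ for $f^n$ on $\#E_n$ symbols. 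Since $\#E_n\geq e^{n(h(f,\mu)-\eps/2)}$, one obtains
\begin{equation*}
h_\top(f|\Lambda)\ =\ \tfrac1n\log\#E_n\ \geq\ h(f,\mu)-\eps/2\ >\ h_\top(f)-\eps.
\end{equation*}

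\textbf{Step 5: Corollary on periodic points.} By construction, $f^n|\Lambda$ is conjugate to a full shift on $\#E_n$ symbols, hence has at least $(\#E_n)^k$ fixed points for $f^{nk}$ for every $k\geq1$. Therefore
\begin{equation*}
\limsup_{N\to\infty}\tfrac1N\log\#\{f^Nx=x\}\ \geq\ \tfrac1n\log\#E_n\ \geq\ h_\top(f)-\eps,
\end{equation*}
and letting $\eps\to0$ yields the stated logarithmic lower bound.

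\textbf{Main obstacle.} The delicate part is Step~4: turning an exponentially large family of $(n,\delta)$-separated orbits in a Pesin block into a genuine topological horseshoe. This requires the Pesin shadowing lemma for pseudo-orbits lying in a Pesin set, together with enough local product structure so that the resulting periodic points really form a hyperbolic set conjugate to a full shift; the $C^{1+\eps}$ hypothesis enters precisely to make Pesin theory (absolute continuity, shadowing, $W^{s,u}_{\rm loc}$) available, which is why Katok's theorem is stated in that smoothness class.
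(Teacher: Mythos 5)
The paper does not actually prove this theorem; it cites \cite{KatokPeriodic} and supplies only the surface-specific reduction, namely that on a surface any ergodic measure of positive entropy is hyperbolic (Ruelle's inequality applied to $f$ and $f^{-1}$), so Katok's general result for hyperbolic ergodic measures applies. Your Step~1 reproduces exactly that reduction, and your Steps~2--5 are a compressed outline of the proof of the general Katok theorem, which the paper leaves entirely to the reference. So you are consistent with the paper but go further, and the right question is whether your sketch of the cited result holds up.

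As an outline of Katok's argument it is in the right spirit, with two caveats. In Step~3, Katok's own proof rests on his entropy formula from the same 1980 paper (cardinality of $(n,\delta)$-spanning sets of sets of $\mu$-measure $\geq1-\delta'$), not on the later Brin--Katok pointwise local entropy; the spanning-set formula is what interfaces cleanly with the Pesin-block localization you need, so you should cite that rather than Brin--Katok. The real work is hidden in Step~4, and your sketch glosses over three nontrivial points: (i) you must extract exponentially many $(n,\delta)$-separated points whose initial and terminal iterates land in the \emph{same} small ball of a fixed Pesin block --- this is a pigeonhole over a finite cover together with a recurrence argument, and one must check the pigeonholing only costs a subexponential factor; (ii) the periodic points produced by the Pesin closing lemma from distinct separated orbits must themselves remain distinct and uniformly hyperbolic; and (iii) one must verify local product structure among these periodic points so that the invariant set carries a full-shift conjugacy rather than merely containing many periodic orbits. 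None of these is a consequence of the statements you invoke; they are precisely the content of Katok's construction, and the $C^{1+\eps}$ smoothness enters at each of (i)--(iii) through Pesin theory (uniform size of local stable/unstable manifolds on a block, shadowing, absolute continuity). If you want a self-contained proof rather than a pointer, Step~4 needs to be expanded into a full closing-lemma-plus-product-structure argument.
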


Katok in fact proved a more general fact, valid for any $C^{1+\eps}$-diffeomorphism
of a compact manifold of any dimension. Namely, if $\mu$ is an ergodic invariant probability
measure without zero Lyapunov exponent :
 $$
  \limsup_{n\to\infty} \frac1n\log\#\{x\in M:f^n(x)=x\} \geq h(f,\mu).
 $$
On surfaces, Ruelle's inequality and the variational principle imply the theorem as explained
above.

\medbreak

I have proved the conjecture for a model class, which replaces
distortion with (simple) singularities \cite{BuzziPWAH}:

\begin{theorem}
Let $f:M\to M$ be a piecewise affine homeomorphism. If $h_\top(f)>0$
then $f$ has finitely many maximum measures.
\end{theorem}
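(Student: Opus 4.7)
The plan is to model $f$ by a Markov shift on a countable alphabet via an entropy-conjugacy and then to invoke the Vere-Jones--Gurevi\v{c} structure theory, exactly as in the interval-map theorem cited above. What makes the piecewise affine setting more tractable than a general $C^r$ surface diffeomorphism is the complete absence of distortion: on each domain of affinity, $f$ coincides with its own differential, so that the Pesin-style uniformization is replaced by purely linear bookkeeping. The price paid is a codimension-one singular locus $S$ where $f$ or $f^{-1}$ fails to be smooth.

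First I would fix the finite partition $\mathcal{P}$ into maximal open domains of affinity of both $f$ and $f^{-1}$ and code orbits by their $\mathcal{P}$-itineraries. A preliminary step is to show that ergodic measures whose generic orbits come too close to $S$ too often have entropy strictly less than $h_\top(f)$; in the piecewise affine setting this should be a soft counting argument, as the cells of the refined partition $\bigvee_{i=-n}^n f^{-i}\mathcal{P}$ that accumulate near $S$ are few relative to the total cell count $e^{n h_\top(f)+o(n)}$. The remaining ``good'' measures live on a subshift over the finite alphabet $\mathcal{P}$.

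Next I would build a Hofbauer-type Markov diagram $\hat M$ whose vertices encode the maximal affine stable/unstable continuations of admissible cylinders, with transitions dictated by the combinatorics of how such affine pieces cross $\mathcal{P}$ under $f$. This yields a countable directed graph, i.e.\ a topological Markov shift $\Sigma$, together with a natural factor map $\pi:\Sigma\to M$ intertwining $\hat f$ and $f$. The key step is to check that $\pi$ is an entropy-conjugacy: any ergodic $\mu$ with $h(f,\mu)$ close to $h_\top(f)$ lifts uniquely to an $\hat f$-invariant $\hat\mu$ with $h(\hat f,\hat\mu)=h(f,\mu)$, while the orbits not admitting a lift form an entropy-negligible set thanks to the singular-set estimate above.

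Finally, Vere-Jones--Gurevi\v{c} (or its extension by Sarig) gives that each irreducible component of $\Sigma$ carries at most one measure of maximal Gurevi\v{c} entropy, and only the components whose Gurevi\v{c} entropy equals $h_\top(f)$ can support maximum measures of $f$. The main obstacle, and where I expect most of the work to lie, is proving that only finitely many irreducible components have Gurevi\v{c} entropy equal to $h_\top(f)$. I would attack this through a quantitative entropy-gap estimate: any component with Gurevi\v{c} entropy sufficiently close to $h_\top(f)$ must contain enough admissible words to eventually intersect every other such component inside the Hofbauer graph, forcing them to merge; this, together with an a priori bound on the geometric complexity of the affine pieces used as states, should leave only finitely many transitive components carrying maximum measures.
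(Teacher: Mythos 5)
The paper does not actually contain a proof of this theorem; it simply states the result and refers to \cite{BuzziPWAH} for the proof, so a line-by-line comparison is not possible. That said, your sketch does follow the general strategy the paper attributes to the cited reference: build an entropy-conjugacy between the map and a countable-state Markov shift via a Hofbauer-type diagram, then invoke Vere-Jones and Gurevi\v{c} to conclude that each irreducible component carries at most one maximal-entropy measure.

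You have, however, correctly identified where the substance of the proof lies and then left it unresolved. The step you flag as the ``main obstacle''---showing that only finitely many irreducible components of the Markov diagram attain the maximal Gurevi\v{c} entropy---is precisely the place where the interval argument does not transfer for free, and your proposed resolution (``enough admissible words to eventually intersect every other such component... forcing them to merge'') is a heuristic, not an argument. In the interval case, Hofbauer's finiteness comes from a uniform escape estimate tied to the Markov structure; in the surface case, the diagram has to be built around \emph{bilateral} itineraries (stable and unstable continuations simultaneously), and the combinatorics of how affine pieces refine under both $f$ and $f^{-1}$ is considerably more delicate. Your step (2) also deserves scrutiny: the claim that measures whose orbits visit a neighborhood of the singular locus $S$ too often have a definite entropy gap is not a ``soft counting argument'' in two dimensions, because $S$ is a codimension-one set whose iterates can fold and accumulate in ways that a crude count of refined cells does not control. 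Both of these points are genuine gaps; the outline is in the right family of arguments, but it does not yet constitute a proof.
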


\section{Dimensional Entropies}

We are going to define the dimensional entropies for a smooth self-map or 
diffeomorphism $f:M\to M$ of a $d$-dimensional compact manifold.
We will then investigate these quantities by considering
other growth rates obtained from the volume and size of the derivatives.
Finally we shall establish the topological variational principle stated in
the introduction by a variant of Pliss Lemma.

\subsection{Singular disks}

The basic object is:

\begin{definition}
A \new{(singular) $k$-disk} is a map $\phi:Q^k\to M$ with $Q^k:=[-1,1]^k$. 
It is $C^r$ if it can be extended to a $C^r$ map on a neighborhood of $Q^k$.
\end{definition}

We need to define the $C^r$ size $\|\phi\|_{C^r}$ of a singular disk $\phi$
for $1\leq r\leq\infty$ as well as the corresponding topologies on the space of such disks.
This involves some technicalities as vectors in different tangent spaces are
not comparable {\it a priori}. We refer to Appendix \ref{appendix:Cr-size}
for the precise definitions, which are rather obvious for finite $r$. For $r=\infty$,
we need an approximation property (which fails for some otherwise very reasonable definitions
of $C^r$ size), Fact \ref{fact:Cinfty-approx}, which is used to prove  Lemma \ref{lem:h-Cinfty} below. 

From now on, we fix some $C^r$
size arbitrarily on the manifold $M$. We will later check that the entropies we are
interested in are in fact independent of this choice.

\medbreak

\noindent{\sl Notations.}
It will be convenient to sometimes write $\phi$ instead of $\phi(Q^k)$, e.g., $h_\top(f,\phi)$
instead of $h_\top(f,\phi(Q^k))$.

\subsection{Entropy of collections of subsets}

Given a collection $\mathcal D$ of subsets of $M$, we associate the following entropies.
Recall that the $(\eps,n)$-covering number of some subset $S\subset M$ is:
 $$
   r_f(\eps,n,S):=\min\{\#C:\bigcup_{x\in S} B_f(\eps,n,x) \supset S\}
 $$
where $B_f(\eps,n,x):=\{y\in M:\forall 0\leq k<n\; d(f^ky,f^kx)<\eps \}$ is the $(\eps,n)$-dynamic
ball. The classical Bowen-Dinaburg formula for the topological entropy of $S\subset M$ is
$h_\top(f,S)=\lim_{\eps\to0}\limsup_{n\to\infty}\frac1n\log r_f(\eps,n,S)$ and $h_\top(f)=
h_\top(f,M)$.

\begin{definition}
The \new{topological entropy} of $\mathcal D$ is:
 $$
    h_\top(f,\mathcal D) := \sup_{D\in\mathcal D} h_\top(f,D)
      = \sup_{D\in\mathcal D} \lim_{\eps\to0} \limsup_{n\to\infty} \frac1n\log r_f(\eps,n,D)
 $$
The \new{uniform topological entropy} of $\mathcal D$ is:
 $$
    H_\top(f,\mathcal D) := 
     \lim_{\eps\to0} \limsup_{n\to\infty} \frac1n\log \sup_{D\in\mathcal D} r_f(\eps,n,D).
 $$
\end{definition}

Clearly $h_\top(f,\mathcal D)\leq H_\top(f,\mathcal D)$. The inequality can be strict as 
shown in the following examples (the first one involving non-compactness, the second one
involving non-smoothness).

\begin{example}
Let $T:\TT^2\to\TT^2$ be a linear endomorphism with two eigenvalues $\Lambda_1,\Lambda_2$ with $1<|\Lambda_1|
<|\Lambda_2|$. Let $\mathcal L$
be the set of finite line segments. We have
 $$
   0 < h_\top(T,\mathcal L)=\log|\Lambda_2| < H_\top(T,\mathcal L) = \log|\Lambda_1| +\log|\Lambda_2|.
 $$ 
\end{example}

\begin{example}
There exist a $C^\infty$ self-map $F$ of $[0,1]^2$ and a collection $\mathcal C$ of $C^r$ curves with 
bounded $C^r$ norm such that $0 < h_\top(F,\mathcal C) < H_\top(F,\mathcal C)$. This can be deduced from the 
example with $h^1_\top(f\times g)>\max(h_\top(f),h_\top(g))$ in \cite{BuzziSMF} by considering curves 
with finitely many bumps converging $C^{r-1}$ to the example curve there, which has infinitely many bumps. 
\end{example}

\subsection{Definitions of the dimensional entropies}

We can now properly define the dimensional entropies. Recall that we have endowed $M$ with
a $C^r$ size.

\begin{definition}
For each $1\leq r\leq\infty$, the \new{standard family} of $C^r$ singular $k$-disks is the collection
of all $C^r$ singular $k$-disks. For finite $r$, the \new{standard uniform family} of $C^r$ singular 
$k$-disks is the collection of all $C^r$ singular $k$-disks with $C^r$ size bounded by $1$.
\end{definition}

\begin{definition}
The \new{$C^r$, $k$-dimensional entropy} of a self-map $f$ of a compact manifold is:
 $$
     h^{k,C^r}_\top(f) := h_\top(f,\mathcal D^k_r)
 $$
where $\mathcal D^k_r$ is a standard family of $C^r$ $k$-disks of $M$. We write
$h^k_\top(f)$ for $h^{k,C^\infty}(f)_\top$ and call it the $k$-dimensional entropy. 

The \new{$C^r$, $k$-dimensional uniform entropy} $H^{k,C^r}_\top(f)$ is obtained 
by replacing $h_\top(f,\mathcal D^k_r)$ with $H_\top(f,D^k_r)$ in the 
above definition where $D^k_r$ is the standard uniform family. We write $H^k_\top(f)$ for $H^{k,C^\infty}_\top(f)$ and call it the $k$-dimensional 
uniform entropy.
\end{definition}

Observe that $h^{k,C^r}_\top(f)$ and $H^{k,C^r}_\top(f)$ are non-decreasing functions of $k$ and non-increasing
functions of $r$. Indeed, (1) $\mathcal D^k_r\supset\mathcal D^k_s$ and $D^k_r\supset D^k_s$ if $r\leq s$;
(2) for any $0\leq\ell\leq
k\leq d$, restricting a $k$-disk to $[0,1]^\ell\times\{0\}^{k-\ell}$ does not increase its
$C^r$ size. Observe also that $h^{0,C^r}_\top(f)=0$ and $h^d_\top(f)=h_\top(f)$.

\begin{lemma}\label{lem:h-Cinfty}
Let $f:M\to M$ be a $C^\infty$ self-map of a compact manifold. We have:
 $$
   H^k_\top(f) = \lim_{r\to\infty} H^{k,C^r}_\top(f)
 $$
and the limit is non-increasing.
\end{lemma}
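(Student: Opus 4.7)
The monotonicity $r\mapsto H^{k,C^r}_\top(f)$ is already part of the observation made just before the lemma: $D^k_s\subset D^k_r$ for $r\leq s$, so $H^{k,C^r}_\top(f)\geq H^{k,C^s}_\top(f)$; the limit as $r\to\infty$ therefore exists. Moreover, any $\psi\in D^k_\infty$ has bounded $C^r$ size for every finite $r$ (a $C^\infty$ disk of $C^\infty$ size $\le 1$ has $C^r$ size $\le 1$ with the natural conventions of the appendix), so $D^k_\infty\subset D^k_r$ and $H^{k,C^r}_\top(f)\geq H^k_\top(f)$ for every finite $r$. This gives one of the two inequalities.

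For the opposite inequality my plan is to convert uniform covering numbers over $D^k_r$ into uniform covering numbers over $D^k_\infty$ using Fact \ref{fact:Cinfty-approx}. That Fact is designed to provide, for each $\delta>0$, a threshold $r_0(\delta)$ and a bound $N(\delta)$ such that every $\phi\in D^k_r$ with $r\geq r_0(\delta)$ admits a $C^\infty$ disk $\psi$ with $\|\psi\|_{C^\infty}\leq N(\delta)$ and $\|\psi-\phi\|_{C^0}\leq\delta$. A routine subdivision of $Q^k$ and rescaling then expresses $\psi(Q^k)$ as the union of $K(\delta)$ images of disks in $D^k_\infty$. Setting $L:=\sup_M\|Df\|$ and choosing $\delta<\eps/(2L^{n-1})$, the $C^0$-closeness of $\phi$ to $\psi$ propagates along the orbit, so that each $(\eps/2,n)$-ball centered on $\psi(Q^k)$ absorbs the corresponding point of $\phi(Q^k)$ into an $(\eps,n)$-ball; this yields
$$
\sup_{\phi\in D^k_r} r_f(\eps,n,\phi)\;\leq\;K(\delta)\sup_{\psi\in D^k_\infty} r_f(\eps/2,n,\psi).
$$

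The main obstacle will be the coupled growth in $\delta$ of $r_0(\delta)$, $N(\delta)$, and $K(\delta)$, which matters because $\delta$ must scale like $L^{-n}$ in order to handle orbit segments of length~$n$. The technical content of Fact \ref{fact:Cinfty-approx} is precisely that these quantities grow sub-exponentially in $1/\delta$, so that $(1/n)\log K(\delta_n)\to 0$ when $\delta_n:=\eps/(2L^{n-1})$. Once this is granted, a diagonal selection of $r$ (large enough in terms of $\eps$ alone) gives
$$
\limsup_{n}\frac{1}{n}\log\sup_{\phi\in D^k_r}r_f(\eps,n,\phi)\;\le\;\limsup_{n}\frac{1}{n}\log\sup_{\psi\in D^k_\infty}r_f(\eps/2,n,\psi),
$$
after which letting first $r\to\infty$ and then $\eps\to 0$ yields $\lim_{r\to\infty}H^{k,C^r}_\top(f)\le H^k_\top(f)$. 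Combined with the opposite bound from the first paragraph, this proves the equality, and the monotonicity of the limit is recorded along the way.
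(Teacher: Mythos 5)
Your overall architecture coincides with the paper's: approximate a $C^r$ disk on small subcubes by $C^\infty$ disks using Fact~\ref{fact:Cinfty-approx}, choose the subcube diameter so that the $C^0$-error is dominated by $\eps\,\Lip(f)^{-n}$, propagate this along the orbit of length $n$, and account for the number of subcubes. However, the quantitative step at the heart of the argument is wrong as stated, and the wording of Fact~\ref{fact:Cinfty-approx} is also misread.

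Fact~\ref{fact:Cinfty-approx} does not produce a single $\psi$ with $\|\psi-\phi\|_{C^0}\le\delta$ for some threshold $r_0(\delta)$ and bound $N(\delta)$; it produces, for each base point $t_0$, a $C^\infty$ disk $\phi_\infty$ with $\|\phi_\infty\|_{C^\infty}\le 2\|\phi\|_{C^r}$ whose error at $t$ is $\|\phi\|_{C^r}\,\|t-t_0\|^r$. This is a local (Taylor-type) approximation: on the whole cube $Q^k$ the error is of order $(2\sqrt k)^r$, which is large. The subdivision into subcubes of diameter $\rho$ is therefore forced \emph{before} applying the Fact, with $\rho^r\le\delta$, hence $\rho\sim\delta^{1/r}$ and a subcube count $K(\delta)\sim\delta^{-k/r}$. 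Taking $\delta_n=\tfrac\eps2\,\Lip(f)^{-n}$ then gives
\[
K(\delta_n)\sim \left(\tfrac{2}{\eps}\right)^{k/r}\Lip(f)^{\frac{k}{r}n},
\qquad\text{so}\qquad
\frac1n\log K(\delta_n)\longrightarrow \frac{k}{r}\,\lip(f)\neq 0
\]
for each fixed finite $r$. Thus your displayed inequality
$\limsup_n\frac1n\log\sup_{\phi\in D^k_r}r_f(\eps,n,\phi)\le \limsup_n\frac1n\log\sup_{\psi\in D^k_\infty}r_f(\eps/2,n,\psi)$
is false for finite $r$: it must carry the additive error $\tfrac{k}{r}\lip(f)$. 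What one actually gets, exactly as in the paper's proof, is
\[
H^{k,C^r}_\top(f)\ \le\ \frac{k}{r}\,\lip(f)\ +\ H^{k}_\top(f),
\]
and the clean conclusion $\lim_{r\to\infty}H^{k,C^r}_\top(f)\le H^k_\top(f)$ follows only after letting $r\to\infty$, not from a ``sub-exponential in $1/\delta$'' bound on $K(\delta)$ at fixed $r$. The first half of your proposal (monotonicity and the trivial inequality) is fine, and the architecture would work once the extra term $\tfrac{k}{r}\lip(f)$ is tracked and killed by $r\to\infty$.
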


We shall see later in Proposition \ref{prop:h-and-H} that the same holds for $h^k_\top(f)$.

\begin{proof}
We use one of the (simpler) ideas of Yomdin's theory. For each $n\geq1$,
we divide $Q^k$ into small cubes with diameter at most $(\eps/4)^{1/r}\|\phi\|_{C^r}^{-1/r}\Lip(f)^{-n/r}$. 
We need $(\eps/4)^{-k/r}\sqrt{k}^k \|\phi\|_{C^r}^{k/r}\Lip(f)^{\frac kr n}$ such cubes. Let $q$ be
one of them. By Fact \ref{fact:Cinfty-approx}, there exists a $C^\infty$ $k$-disk $\phi_q$ such that
$\|\phi_q\|_{C^\infty}\leq2\|\phi\|_{C^r}$ and
 $$
   \forall t\in q\quad d(\phi_q(t),\phi(t))\leq \|\phi\|_{C^r} \|t-t_q\|^r
     \leq  \|\phi\|_{C^r} \times \frac\eps2 \|\phi\|_{C^r}^{-1}\Lip(f)^{-n}
     \leq \frac\eps2 \Lip(f)^{-n}
 $$
It follows that $r_f(\eps,n,\phi\cap q)\leq r_f(\eps/2,n,\phi_q)$. Thus,
 $$
    r_f(\eps,n,\mathcal D^k_r) \leq \sqrt{k}^k(\eps/4)^{-k/r} \|\phi\|_{C^r}^{k/r}\Lip(f)^{\frac kr n} r_f(\eps/2,n,\mathcal D^k_\infty)
 $$
Hence, writing $\lip(f):=\max(\log\Lip(f),0)$,
 $$
    H^{k,C^r}_\top(f) \leq \frac kr \lip(f) + H^k_\top(f).
 $$
The inequality $H^{k,C^r}_\top(f) \geq H^k_\top(f)$ is obvious, concluding the proof.
\end{proof}

\begin{lemma}
The  numbers $H^{k,C^r}_\top(f)$ do not depend on the underlying choice of a $C^r$ size.
\end{lemma}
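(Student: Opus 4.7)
The plan is to absorb any change of $C^r$ size into a multiplicative constant in the covering numbers, which is invisible to the exponential growth rate defining $H_\top$. The key point is that on a compact manifold any two $C^r$ sizes are equivalent up to a bounded factor, and a bounded constant can be compensated by subdividing the parameter cube $Q^k$ into a bounded number of affinely parametrised sub-cubes.

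First, I would fix two $C^r$ sizes $\|\cdot\|_{(1)}$ and $\|\cdot\|_{(2)}$. Since the definitions in the appendix are local and $M$ is compact (covered by a finite atlas on which the two sizes differ by a smooth change of coordinates), there exists $C\geq1$ such that $C^{-1}\|\phi\|_{(1)}\leq\|\phi\|_{(2)}\leq C\|\phi\|_{(1)}$ for every $C^r$ singular $k$-disk $\phi$. Writing $\mathcal F_i:=\{\phi:\|\phi\|_{(i)}\leq1\}$, this immediately gives $\mathcal F_2\subset\{\phi:\|\phi\|_{(1)}\leq C\}$.

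Second, I would show that any $\phi$ with $\|\phi\|_{(1)}\leq C$ has $\phi(Q^k)$ covered by at most $N=(2C)^k$ images of disks from $\mathcal F_1$. To do this, partition $Q^k$ into sub-cubes of side $2\delta$ with $\delta=1/C$, and for each sub-cube $q$ let $\psi_q:Q^k\to q$ be the orientation-preserving affine bijection. Then $\phi\circ\psi_q$ is a $C^r$ singular $k$-disk whose images still cover $\phi(Q^k)$, and since $\psi_q$ is affine with derivative of norm $\delta$ and vanishing higher derivatives, the chain rule gives $\|D^s(\phi\circ\psi_q)\|\leq\delta^s\|D^s\phi\|$ for every $s\geq1$. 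Hence $\|\phi\circ\psi_q\|_{(1)}\leq\delta\cdot\|\phi\|_{(1)}\leq1$, so $\phi\circ\psi_q\in\mathcal F_1$.

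Combining the two steps yields $r_f(\eps,n,\phi)\leq N\cdot\sup_{\phi'\in\mathcal F_1}r_f(\eps,n,\phi')$ for every $\phi\in\mathcal F_2$, with $N$ independent of $\phi$, $\eps$, and $n$. Taking $\sup$ over $\phi\in\mathcal F_2$, then $\frac{1}{n}\log$, and the limits $n\to\infty$, $\eps\to0$ gives $H_\top(f,\mathcal F_2)\leq H_\top(f,\mathcal F_1)$; swapping the two sizes gives the reverse inequality. The main obstacle I foresee is the scaling estimate $\|\phi\circ\psi_q\|_{(1)}\leq\delta\|\phi\|_{(1)}$: for the natural sup-of-derivatives definition of $C^r$ size, this is an immediate chain-rule computation, but if the $C^r$ size implicitly incorporates a $C^0$ contribution one has to argue separately that this term is uniformly bounded in terms of $\diam(M)$ and absorbed by choosing $\delta$ small enough — this is exactly the kind of technicality the paper defers to Appendix~\ref{appendix:Cr-size}.
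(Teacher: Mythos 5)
Your argument is essentially the paper's own for finite $r$: Fact~\ref{fact:equiv-Cr} gives a uniform constant $C$ comparing the two sizes, and an affine subdivision of $Q^k$ into $O(C^k)$ sub-cubes turns a unit-size disk for one norm into a bounded number of unit-size disks for the other, yielding
\[
\sup_{\phi\in\mathcal F_2} r_f(\eps,n,\phi)\leq N\,\sup_{\phi'\in\mathcal F_1} r_f(\eps,n,\phi'),
\]
which is exactly the display in the paper's proof. The scaling computation $\|D^s(\phi\circ\psi_q)\|\le\delta^s\|D^s\phi\|$ is correct, and with $\delta\le1$ it kills all orders $1\le s\le r$ simultaneously, so the subdivision step is sound for every finite $r$.

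However, there is a genuine gap: you treat the case $r=\infty$ (which is the most important one, since $H^k_\top(f)$ is \emph{defined} as $H^{k,C^\infty}_\top(f)$) by simply invoking the comparison constant $C$ for the $C^\infty$ size as well. That is exactly what cannot be done directly. Two $C^\infty$ sizes are built from two atlases whose transition maps are composed with $\phi$; by Fa\`a di Bruno, the bound on the $s$-th derivative of $(\chi^1)^{-1}\circ\phi$ in terms of those of $(\chi^2)^{-1}\circ\phi$ involves combinatorial constants that grow without bound as $s\to\infty$, so there is no single $C$ working for \emph{all} orders simultaneously. (The $C^0$ diameter issue you flag at the end is a red herring; the real obstruction is the growth in $s$.) The paper avoids this entirely: its proof opens with ``Using Lemma~\ref{lem:h-Cinfty}, it is enough to treat the case with finite smoothness,'' i.e.\ it first writes $H^k_\top(f)=\lim_{r\to\infty}H^{k,C^r}_\top(f)$ and then applies the finite-$r$ comparison argument to each term. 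You should add that reduction; once you do, your finite-$r$ argument carries the day and matches the paper step for step.
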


\begin{proof}
Using Lemma \ref{lem:h-Cinfty}, it is enough to treat the case with finite smoothness.
Let $\mathcal D_1,\mathcal D_2$ be two standard families of $k$-disks, defined by two $C^r$ sizes
$\|\cdot\|_{C^r}^1,\|\cdot\|_{C^r}^2$. By Fact \ref{fact:equiv-Cr}, there exists $C<\infty$ such that
$\|\cdot\|_{C^r}^1\leq C\|\cdot\|_{C^r}^2$. Hence setting $K:=([C]+1)^k$, for any $k$-disk 
$\phi_1\in\mathcal D_1$ can be \emph{linearly
subdivided}\footnote{That is, each $\phi_2^j=\phi_1\circ L_j$ with $L_j:Q^k\to Q^k$ linear and
$\bigcup_{j=1}^K L_j(Q^k)=Q^k$.} into $K$ $k$-disks $\phi_2^1,\dots,\phi_2^K\in\mathcal D_2$.
Thus
 $$\forall n\geq0\quad r_f(\eps,n,\phi_1)\leq K\max_j r_f(\eps,n,\phi_2^j).$$
It follows immediately that $H(f,\mathcal D_1) 
\leq H(f,\mathcal D_2)$. The claimed equality follow in turn by symmetry.
\end{proof}

\section{Other growth rates of submanifolds}

\subsection{Volume growth}

Entropy is a growth rate under iteration. Equipping $M$ with a Riemannian structure 
allows the definition of volume growth of submanifolds.

\begin{definition}
Let $\phi:Q^k\to M$ be a singular $k$-disk. Its \new{(upper) growth rate} is:
 $$
   \gamma(f,\phi):=\limsup_{n\to\infty} \frac1n\log \vol(f^n\circ\phi) \text{ with }
    \vol(\psi):=\int_{Q^k} \| \Lambda^k\psi(x)\| \, dx
 $$
where $\|\Lambda^k\psi\|$ is the Jacobian of $\psi:Q^k\to M$ wrt the obvious Riemannian structures.
The volume growth exponent of $f$ in dimension $k$ is:
 $$
    \gamma^k(f):= \sup_{\phi\in\mathcal D^k_r} \gamma(f,\phi),
 $$
$\gamma(f):=\max_{0\leq k\leq d}\gamma^k(f)$ is simply called the \new{volume growth} of $f$.
\end{definition}

Observe that the value of the growth rates defined above are independent of the choice of the Riemannian structure, by compactness of the manifold.

The volume growth dominates the entropy quite generally:

\begin{theorem}[Newhouse \cite{NewhouseVol}]
Let $f:M\to M$ be a $C^{1+\alpha}$, $\alpha>0$, smooth self-map of a compact
manifold. Then:
 $$
    h_\top(f) \leq \gamma(f).
 $$
\end{theorem}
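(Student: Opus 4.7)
The plan is to combine the variational principle with Ruelle's inequality and a Pesin-theoretic lower bound on the volume growth of unstable manifolds, reducing matters to a single ergodic measure. By the variational principle recalled in the introduction, it suffices to prove $h(f,\mu) \leq \gamma(f)$ for every ergodic invariant probability measure $\mu$. Fix such a $\mu$ with Lyapunov exponents $\lambda_1(\mu) \geq \cdots \geq \lambda_d(\mu)$; Ruelle's inequality gives $h(f,\mu) \leq \sum_i \lambda_i(\mu)^+$. Let $d_u$ be the number of strictly positive exponents. If $d_u = 0$ the bound is trivial, so assume $d_u \geq 1$.

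The heart of the argument is to exhibit a singular $d_u$-disk whose volume growth rate is at least $\sum_i \lambda_i(\mu)^+$. Since $f$ is $C^{1+\alpha}$, Pesin's unstable manifold theorem applies: for $\mu$-almost every $x$ there is a local unstable manifold $W^u_{\mathrm{loc}}(x)$, a $C^{1+\alpha}$ embedded disk of dimension $d_u$ tangent at $x$ to the Oseledets subspace associated with the positive exponents. After an affine reparametrization, a small piece of $W^u_{\mathrm{loc}}(x)$ may be written as the image of a singular $d_u$-disk $\phi : Q^{d_u} \to M$.

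Next I would bound $\vol(f^n \circ \phi)$ from below. Since the tangent space to $\phi(Q^{d_u})$ at $\phi(t)$ lies in the unstable direction of Pesin theory, the integrand $\|\Lambda^{d_u}(f^n \circ \phi)(t)\|$ is the product of the singular values of $Df^n$ along this expanding direction. Working in Lyapunov charts around a Pesin block of positive $\mu$-measure, and invoking Birkhoff's theorem to select times $n$ at which the orbit of $x$ has spent a large fraction of its history in the block, one obtains, for any fixed $\eta > 0$, a uniform lower bound of order $\exp\bigl(n\sum_i\lambda_i(\mu)^+ - n\eta\bigr)$ on this integrand. Integrating over $Q^{d_u}$ yields
\[
\limsup_{n \to \infty} \frac{1}{n} \log \vol(f^n \circ \phi) \geq \sum_i \lambda_i(\mu)^+ - \eta,
\]
and letting $\eta \to 0$ gives $\gamma(f,\phi) \geq \sum_i \lambda_i(\mu)^+$. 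Therefore $\gamma(f) \geq \gamma^{d_u}(f) \geq h(f,\mu)$, and taking the supremum over $\mu$ yields the theorem.

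The main obstacle is precisely this uniformity: Oseledets supplies pointwise asymptotics for the Jacobian along the orbit of $x$, but one needs a lower bound that holds uniformly over a genuine $d_u$-dimensional disk. Achieving this requires Pesin blocks together with H\"older continuity of the Oseledets splitting on those blocks, which is where the $C^{1+\alpha}$ hypothesis is indispensable; Pesin's theory is known to fail for merely $C^1$ maps, so $\alpha > 0$ cannot be dropped. Once this uniform Pesin-chart estimate is in place, the remaining steps (integrating, passing to the limit, invoking the variational principle) are essentially formal.
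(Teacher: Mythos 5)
The chain you propose — Ruelle's inequality $h(f,\mu) \le \sum_i \lambda_i(\mu)^+$ followed by the lower bound $\gamma(f,\phi) \ge \sum_i\lambda_i(\mu)^+$ for a Pesin unstable disk $\phi$ — breaks at the second step, and the failure is not a mere technicality of uniformity.

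Consider a linear Smale horseshoe in $[0,1]^2$ with unstable expansion $\lambda_u = 3$: two vertical strips of width $1/3$ are sent onto two full horizontal strips, the middle third escapes (say to a sink). Let $\mu$ be the measure of maximal entropy on the invariant Cantor set, so $h(f,\mu) = \log 2$ and $\sum_i\lambda_i(\mu)^+ = \log 3$. A local unstable $1$-disk $\phi$ is a horizontal segment. After $n$ iterates, the part that has stayed in the rectangle consists of $2^n$ segments of length $1$ each, hence total length $2^n$; the escaping part is absorbed by the sink, where the leaf direction contracts. Thus $\gamma(f,\phi) = \log 2 < \log 3 = \sum_i\lambda_i(\mu)^+$. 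Your intermediate inequality is false precisely because Oseledets gives $\mu$-a.e.\ regularity, not Lebesgue-a.e.\ regularity along the unstable leaf: Lebesgue-almost every point of $\phi$ escapes to the sink, and the integral $\int_{Q^{d_u}}\|\Lambda^{d_u}(f^n\circ\phi)\|$ is not driven by $J^u_n(x)\approx e^{n\sum\lambda_i^+}$ but by the survival rate, which matches the entropy. So the statement $\gamma(f) \ge h(f,\mu)$ is saved only because Ruelle's inequality is far from sharp in exactly the same situations.

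The route that does work — and what Newhouse actually does, as the paper's subsequent remark about $\gamma^{d^{cu}}$ and Cogswell's refinement $h(f,\mu)\le h_\top(f,\Delta)\le\gamma(f,\Delta)$ both hint at — is to bypass Ruelle entirely and bound the $(\eps,n)$-covering numbers of a Pesin block \emph{directly} by $\vol(f^n\circ\phi)$: inside a Pesin block one shows that $(\eps,n)$-separated orbits are shadowed by $\eps$-separated points on the iterated unstable disk, so $r_f(\eps,n,\cdot)\lesssim\eps^{-d_u}\vol(f^n\circ\phi)$, whence $h(f,\mu)\le\gamma^{d_u(\mu)}(f)$ and the variational principle finishes. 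Ruelle's inequality is not used. A secondary issue you should also address is that the theorem is stated for $C^{1+\alpha}$ \emph{self-maps}, not merely diffeomorphisms, so the Pesin unstable manifolds must be built through the natural extension; your sketch implicitly assumes invertibility.
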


\begin{remark}
More precisely, his proof gave
 $$
   h_\top(f) \leq \gamma^{d^{cu}}(f)
 $$
where $d^{cu}$ is such that the variational principle $h_\top(f)=\sup_\mu h(f,\mu)$ still holds when
$\mu$ is restricted to measures with exactly $d^{cu}$ nonpositive Lyapunov exponents.

For $C^{1+1}$-diffeomorphisms, deeper ergodic techniques due to Ledrappier and Young are available 
and Cogswell \cite{Cogswell} has shown that, for any ergodic invariant probability measure $\mu$, there 
exists a disk $\Delta$ such that $h_\top(f,\mu)\leq h_\top(f,\Delta)\leq \gamma(f,\Delta)$.
More precisely, the dimension of this disk is the number of positive Lyapunov exponents. For $C^\infty$
diffeomorphisms (more generally if there is a maximum measure) there exists a disk $\Delta_{\max}$ such that $$h_\top(f)=h_\top(f,\Delta_{\max})=\gamma(f,\Delta_{\max}).$$

I do not know if Newhouse's inequality fails for $C^1$ maps.
\end{remark}

The proof of Newhouse inequality involves \emph{ergodic theory} and especially \emph{Pesin theory}. 
Indeed, this type of inequality does not hold uniformly:

\begin{example}
There exist a $C^\infty$ self-map $F$ of a surface and a 
$C^\infty$ curve $\phi$ such that, for some sequence $n_i\to\infty$,
 $$
    \lim_{i\to\infty} \frac1{n_i}\log r_F(\eps,n_i,\phi) > 
    \lim_{i\to\infty} \frac1{n_i}\log \vol(F^{n_i}\circ\phi).
 $$
\end{example}

\begin{proof}
Let $\alpha>0$ be some small number. Let $I:=[0,1]$. Let $f:I\to I$ be a $C^\infty$
map such that: (i) $f(0)=f(1)=0$; (ii) $f(1/2)=1$; (iii) $f|[0,1/2]$ is increasing
and $f|[1/2,1]$ is decreasing; (iv) $f'|[0,1/2-\alpha]=2(1+\alpha)$ and $f'|[1/2+\alpha]=
-2(1+\alpha)$. 
% ... contracting map ...
As $\alpha$ is small, $1/2$ has a preimage in  $[0,1/2-\alpha]$. 
Let $x_{-n}$ be the leftmost preimage in $f^{-n}(1)$: $x_0=1$, $x_{-1}=1/2$,
and for all $n\geq2$, $x_{-n}=2^{-n}(1+\alpha)^{-n+1}$.
Let $g:I\to I$ be another $C^\infty$ map such that: (i) $g(0)=0$; (ii) $0<g'<1$;
(iii) $g(x_{-n})=x_{-n-1}$ for all $n\geq0$.

Consider the following composition of length $3n$ for some $n\geq1$:
 \begin{equation}\label{eq:composition-1}
    I \longrightarrow^{f^n} 2^n\times I \longrightarrow^{g^n} 2^n \times [0,x_{-n}]
    \longrightarrow^{f^n} 2^n\times I.
 \end{equation}
Observe that after time $2n$, the length of the curve $g^n\circ f^n$ is
$2^n\cdot x_{-n}=(1+\alpha)^{-n+1}$ whereas the number of $(\eps,n)$-separated orbits is
less than $\eps^{-1}n2^n$. After time $3n$, the curve $f^n\circ g^n\circ f^n$ has
image $I$ with multiplicity $2^n$. It is therefore easy to analyze the dynamics of 
compositions of such sequences.

We build our example by considering a skew-product for which the curve will be a fiber
over a point which will drive the application of sequences as above.

Let $h:S^1\to S^1$ be the circle map defined by $h(\theta)=4\theta\mod 2\pi$.
Let $F:S^1\times I\to S^1\times I$ be a $C^\infty$ map such that:
$F(\theta,x)=(h(\theta),f(x))$ if $\theta\in[0,\frac16]$ and
$F(\theta,y)=(h(\theta),g(x))$ if $\theta\in[\frac12,\frac23]$

Recall that the expansion in basis $4$ of $\theta\in S^1$ is the sequence
$a_1a_2\dots\in\{0,1,2,3\}^\NN$ such that $\theta=2\pi\sum_{k\geq1} a_k 4^{-k}$.
We write $\theta=\overline{0.a_1a_2a_3\dots}^4$.

Observe that, whenever $\theta$ has only $0$s and $2$s in its expansion, 
 $$h^n(\theta)\in[0,\frac16] =[0,\overline{0.02222\dots}^4]$$
whenever its $n$th digit is $0$ and 
 $$h^n(\theta)\in[\frac12,\frac23]=[\overline{0.2000\dots}^4,
\overline{0.222\dots}^4]
 $$ 
whenever its $n$th digit is $2$.
Thus we can specify the desired compositions of $f$ and $g$ just by picking $\theta\in S^1$
with the right expansion. We pick:
 $$
    \theta_1 = \overline{0.0^{n_1}2^{n_1}0^{n_1+n_2}2^{n_2}0^{n_2+n_3}2^{n_3}0^{n_3+n_4}....}^4
 $$
so that we shall have a sequence of compositions of the type (\ref{eq:composition-1}). We write $N_i:=3n_1+\dots+3n_i$. We set $n_i:=i!$ so that 
$n_{i+1}/N_i\to\infty$. Let $\phi^1:Q^1\to S^1\times I$ be defined by $\phi^1(s)=(\theta_1,
(s+1)/2)$.

The previous analysis shows that $F^{N_i}\circ\phi^1$ has image $I$ with
multiplicity $2^{n_1+\dots+n_i}=2^{\frac13N_i}$. $F^{N_i+n_{i+1}}\circ\phi^1$
has image $I$ with multiplicity $2^{\frac13N_i}\times 2^{n_{i+1}}$. $F^{N_i+n_{i+1}}\circ\phi^1$
has image $[0,x_{-n_{i+1}}]$ with multiplicity $2^{\frac23N_i}\times 2^{n_{i+1}}$. It follows
that, setting $t_i:=N_i+2n_{i+1}\approx 2n_{i+1}$,
 $$
   \log r_F(\eps,t_i,\phi^1) \approx (\frac13 N_i+n_{i+1})\log 2 
 $$
whereas
 $$
   \vol(F^{t_i}\circ\phi^1) = x_{-n_{i+1}}\times 2^{\frac13N_i+n_{i+1}} =
   (1+\alpha)^{-n_{i+1}} 2^{\frac13N_i}.
 $$
Hence,
 $$
   \frac1{t_i} \log r_F(\eps,t_i,\phi^1) \approx \frac12\log 2 \text{ whereas }
  \frac1{t_i} \log \vol(F^{t_i}\circ\phi^1)\preceq -\frac12\alpha,
 $$
as claimed.
\end{proof}

\begin{remark}
The inequality in the previous example is obtained as the
length is contracted after a large expansion. For curves, this is in
fact general and it is
easily shown that, for any $C^1$ $1$-disk $\phi$ with unit length, for any 
$0<\eps<1$:
 \begin{equation}\label{eq:counterexample2}
    \forall n\geq0\quad \eps \cdot r_f(\eps,n,\phi) \leq \max_{0\leq k< n} \vol(f^k\circ\phi) + 1.
 \end{equation}
(\ref{eq:counterexample2}) implies that, for curves,
 \begin{equation}\label{eq:htop-less-vol}
   h_\top(f,\phi)\leq\gamma(f,\phi),
  \end{equation} 
both quantities being defined by $\limsup$ (this would fail using $\liminf$).
However, one can find similarly as above, a $C^\infty$ self-map of a $3$-dimensional compact manifold 
and a $C^\infty$ smooth $2$-disk such that (\ref{eq:counterexample2}) fails though
(\ref{eq:htop-less-vol}) seems to hold.
\end{remark}

We ask the following:

\begin{question}
{\it Let $f:M\to M$ be a $C^\infty$ self-map of a compact $d$-dimensional manifold. 
Is it true that, for any singular $k$-disk $\psi$ ($0\leq k\leq d$)
 $$
   h_\top(f,\psi)\leq\max_{\phi\subset\psi} \gamma(f,\phi)\quad ?
 $$ 
(both rates being defined using $\limsup$ and $\phi$ ranging over singular $\ell$-disks, $0\leq\ell<k$, with
$\phi(Q^\ell)\subset\psi(Q^k)$)? Is it at least true that 
 $$
   h^k_\top(f) \leq \max_{0\leq \ell\leq k} \gamma^\ell(f)\quad ?
 $$
These might even hold for finite smoothness for all I know.}
\end{question}

Conversely, entropy also provides some bounds on volume growth

\begin{theorem}[Yomdin \cite{Yomdin1}]
Let $f:M\to M$ be a $C^r$, $r\geq1$, smooth self-map of a compact
manifold. Let $\alpha>0$. Then there exist $C(r,\alpha)<\infty$ and $\eps_0(r)>0$ with the following
property. Let $\phi:Q^k\to M$ be any $C^r$ singular $k$-disk with unit $C^r$ size for some
$0\leq k\leq d$. Then, for any $n\geq0$,
 $$
    \vol(f^n\circ\phi) \leq C(r,\alpha) \Lip(f)^{(\frac kr+\alpha) n} \cdot r_f(\eps_0,n,\phi).
 $$
In particular,
 $$
   \gamma^k(f) \leq h^k_\top(f) + \frac kr \lip(f).
 $$
\end{theorem}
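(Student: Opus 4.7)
The plan is to follow Yomdin's strategy by combining a dynamical cover of $\phi(Q^k)$ with the algebraic reparametrization lemma. First, I fix $\eps_0>0$ small (to be chosen in terms of $r$ and $\alpha$) and take a minimal $(\eps_0,n)$-spanning set $\{x_1,\dots,x_N\}$ for $\phi(Q^k)$, so $N=r_f(\eps_0,n,\phi)$. The preimages $A_j:=\phi^{-1}(B_f(\eps_0,n,x_j))$ cover $Q^k$, and
$$\vol(f^n\circ\phi)\leq\sum_{j=1}^N\vol(f^n\circ\phi|_{A_j}).$$
The problem therefore reduces to showing $\vol(f^n\circ\phi|_{A_j})\leq C(r,\alpha)\Lip(f)^{(k/r+\alpha)n}$ uniformly in $j$.

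The heart of the proof is an iterated form of Yomdin's algebraic lemma, which I would use as follows: whenever a $C^r$ $k$-disk $\psi$ of unit $C^r$ size has its $f$-orbit confined to a single $(\eps_0,n)$-dynamical ball, one can cover $Q^k$ by images of affine contractions $\sigma_1,\dots,\sigma_m$ with
$$m\leq C(r,k,\alpha)\,\Lip(f)^{(k/r+\alpha)n},$$
such that each $f^n\circ\psi\circ\sigma_i$ has $C^r$ size (and hence $C^1$ norm) at most $\eps_0$ on $Q^k$. The one-step version divides $Q^k$ into $\approx\Lip(f)^{k/r}$ affine subcubes of side $\sim\Lip(f)^{-1/r}$, on each of which $\psi$ is replaced by its $(r-1)$-Taylor polynomial to within $\sim\Lip(f)^{-1}$; composing with $f$ returns a reparametrized disk of bounded $C^r$ size. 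Iterating $n$ times produces $\Lip(f)^{nk/r}$ pieces. The volume of each piece is at most $C(k)\eps_0^k$ (Lipschitz constant $\leq\eps_0$ on $Q^k=[-1,1]^k$), and so
$$\vol(f^n\circ\phi|_{A_j})\leq m\cdot C(k)\eps_0^k\leq C(r,\alpha)\Lip(f)^{(k/r+\alpha)n};$$
summing over $j$ gives the first inequality of the theorem.

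For the \emph{in particular} statement, I take the logarithm, divide by $n$, and pass to $\limsup_{n\to\infty}$, obtaining
$$\gamma(f,\phi)\leq (k/r+\alpha)\log\Lip(f)+\limsup_{n\to\infty}\tfrac{1}{n}\log r_f(\eps_0,n,\phi).$$
Since $r_f(\eps,n,\phi)$ is non-increasing in $\eps$, the second term is bounded by $h_\top(f,\phi)\leq h^k_\top(f)$, and $\log\Lip(f)\leq\lip(f)$; taking the supremum over $\phi$ and sending $\alpha\downarrow0$ yields $\gamma^k(f)\leq h^k_\top(f)+(k/r)\lip(f)$.

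The main obstacle is Step 2, the iterated Yomdin reparametrization lemma. The one-step version is a nontrivial piece of real semialgebraic geometry (polynomial approximation with simultaneous control of all derivatives up to order $r$), and iterating it requires careful bookkeeping so that the multiplicative combinatorial constant coming out of each of the $n$ iterations compounds into something absorbable in the $\Lip(f)^{\alpha n}$ slack. The slack $\alpha$ is genuinely needed, both to swallow these constants and to convert the generic "bounded diameter" output of Yomdin's lemma into the quantitative bound $\eps_0$ on the Lipschitz constant of each final piece, which in turn requires coordinating the choice of $\eps_0$ with $r$, $k$ and $\alpha$.
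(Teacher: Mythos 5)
This theorem is stated in the paper with attribution to Yomdin \cite{Yomdin1} and is not reproved there; the paper instead packages the key estimate as Proposition \ref{prop:CardRes} on resolutions, referring to \cite{BuzziPhD} for the core semi-algebraic reparametrization argument. Your sketch is the standard Yomdin route and is essentially the same approach as the resolution framework: your ``cover $Q^k$ by affine contractions $\sigma_1,\dots,\sigma_m$ with $m\lesssim\Lip(f)^{(k/r+\alpha)n}$ so that $f^n\circ\psi\circ\sigma_i$ has bounded $C^r$ size'' is exactly what a $C^r$-resolution of order $n$ is, and your bound on $m$ is the second inequality of Proposition \ref{prop:CardRes}. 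You also honestly flag, rather than hide, that the iterated algebraic lemma with its constant bookkeeping is the genuinely hard part; this is precisely the part the paper itself delegates to \cite{BuzziPhD}/\cite{Yomdin1}. The derivation of the ``in particular'' from the main estimate — take $\tfrac1n\log$, pass to $\limsup$, use monotonicity of $r_f$ in $\eps$, then $\sup_\phi$ and $\alpha\downarrow0$ — is correct. One small imprecision worth noting (also present in the paper's own notation): for $f$ only $C^r$ the right-hand side should read $h^{k,C^r}_\top(f)$ rather than $h^k_\top(f)=h^{k,C^\infty}_\top(f)$, since the disks entering $\gamma^k$ are $C^r$ and $h^{k,C^r}_\top\geq h^k_\top$; the two agree when $f$ is $C^\infty$ by Proposition \ref{prop:h-and-H}.
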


\begin{remark}
The above extra term is indeed necessary as shown already by examples attributed by Yomdin
\cite{Yomdin1} to Margulis: there is $f:[0,1]\to[0,1]$, $C^r$ with $h_\top(f)=0$ and $\gamma(f)=
\lip(f)/r$.
\end{remark}

\begin{remark}
Yomdin's estimate is \emph{uniform} holding for each disk and each iterate. Its proof involves very little dynamics and no ergodic theory, in contrast to Newhouse's inequality quoted above.
\end{remark}

\begin{corollary}
Let $f:M\to M$ be a self-map of a compact manifold. If $f$ is $C^\infty$, then
 $$
   h_\top(f) = \gamma(f).
 $$
\end{corollary}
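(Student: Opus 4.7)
The plan is to prove the two inequalities $h_\top(f)\leq \gamma(f)$ and $\gamma(f)\leq h_\top(f)$ separately, each following from one of the two theorems just quoted, together with the monotonicity of $k\mapsto h^k_\top(f)$ and the limit in $r$.

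First, since $f$ is $C^\infty$ it is in particular $C^{1+\alpha}$ for any $\alpha>0$, so Newhouse's theorem applies directly and gives $h_\top(f)\leq \gamma(f)$. This half requires no additional work.

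For the reverse inequality I would apply Yomdin's theorem in each dimension $0\leq k\leq d$ and every finite smoothness $r\geq 1$: since $f\in C^\infty\subset C^r$, we have
$$
  \gamma^k(f)\leq h^k_\top(f)+\frac{k}{r}\lip(f).
$$
The left-hand side is independent of $r$, so letting $r\to\infty$ yields $\gamma^k(f)\leq h^k_\top(f)$. By the monotonicity of the dimensional entropies noted after their definition, $h^k_\top(f)\leq h^d_\top(f)=h_\top(f)$. Taking the maximum over $k$ in the defining equality $\gamma(f)=\max_{0\leq k\leq d}\gamma^k(f)$ gives $\gamma(f)\leq h_\top(f)$, and combining the two bounds closes the equality.

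There is really no obstacle here beyond a careful reading of the two theorems: Yomdin's bound applies to any $C^r$ singular disk regardless of the smoothness of $f$, but the right-hand side error term $(k/r)\lip(f)$ can be absorbed only because $f$ itself is $C^\infty$, so that we are free to choose $r$ as large as we wish. This is precisely where the $C^\infty$ hypothesis enters, and, as the Margulis example cited in the preceding remark shows, the corollary genuinely fails for maps of only finite smoothness.
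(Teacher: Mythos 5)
Your proof is correct and is the natural argument the paper intends; the corollary is stated without a written proof, but it clearly follows exactly as you lay out: one inequality from Newhouse (applicable since $C^\infty\subset C^{1+\alpha}$) and the other from Yomdin's bound $\gamma^k(f)\leq h^k_\top(f)+\frac{k}{r}\lip(f)$, with the error term killed by letting $r\to\infty$, then monotonicity $h^k_\top(f)\leq h^d_\top(f)=h_\top(f)$ and the defining maximum $\gamma(f)=\max_k\gamma^k(f)$. Your closing remark — that the $C^\infty$ hypothesis is used precisely to let $r\to\infty$, and that the Margulis-type $C^r$ examples show the statement is false at finite smoothness — is exactly the right thing to observe.
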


Let $f_*:H_*(M,\RR)\to H_*(M,\RR)$ be the total homological action of $f$.
Let $\rho(f_*)$ be its spectral radius. As the $\ell^1$-norm in homology
gives a lower bound on the volume, we have $\gamma(f)\geq \log \rho(f_*)$.
Hence, the following special case of the Shub Entropy Conjecture 
is proved:

\begin{corollary}[Yomdin \cite{Yomdin1}] 
Let $f:M\to M$ be a self-map of a compact manifold. If $f$ is $C^\infty$, then
 $$
   \log \rho(f_*) \leq h_\top(f).
 $$
\end{corollary}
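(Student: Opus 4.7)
The plan is to combine the preceding corollary $h_\top(f)=\gamma(f)$ (valid for $C^\infty$ self-maps) with the geometric estimate $\gamma(f)\geq\log\rho(f_*)$ sketched in the paragraph above. Only the latter needs an argument. Since $H_*(M,\RR)=\bigoplus_{k=0}^d H_k(M,\RR)$ is preserved degree by degree, $\rho(f_*)=\max_k\rho(f_{*,k})$, so it is enough to prove $\gamma^k(f)\geq\log\rho(f_{*,k})$ for each $k$, where $f_{*,k}$ is the action on $H_k(M,\RR)$.

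Fix $k$. Since $H_k(M,\RR)$ is finite-dimensional, I pick $\alpha\in H_k(M,\RR)$ with $\limsup_n\|f_{*,k}^n\alpha\|^{1/n}=\rho(f_{*,k})$ for any norm $\|\cdot\|$. Using a smooth triangulation of $M$, represent $\alpha$ by a finite smooth cycle $c=\sum_{i=1}^N a_i\phi_i$, with each $\phi_i$ a $C^\infty$ singular $k$-disk (each smooth simplex may be reparametrized through $Q^k$ after a corner smoothing). Then $f^n\circ c$ represents $f_{*,k}^n\alpha$, and subadditivity of volume under linear combinations of chains yields
\[
  \sum_{i=1}^N|a_i|\,\vol(f^n\circ\phi_i)\;\geq\;\vol(f^n\circ c),
\]
so that at least one $\phi_i$ satisfies $\vol(f^n\circ\phi_i)\geq \vol(f^n\circ c)/(N\max_j|a_j|)$ along a subsequence of $n$.

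The geometric crux is the lower bound $\vol(f^n\circ c)\geq c_0\,\|f_{*,k}^n\alpha\|$, which I would obtain by de Rham duality: choose a smooth closed $k$-form $\omega$ on $M$ such that the pairing $\langle[\omega],f_{*,k}^n\alpha\rangle$ grows like $\rho(f_{*,k})^n$ along a subsequence. Then
\[
  \bigl|\langle[\omega],f_{*,k}^n\alpha\rangle\bigr|\;=\;\left|\int_{f^n\circ c}\omega\right|\;\leq\;\|\omega\|_\infty\cdot\vol(f^n\circ c).
\]
Combining the two displays gives $\gamma(f,\phi_i)\geq\log\rho(f_{*,k})$ for some $i$, hence $\gamma^k(f)\geq\log\rho(f_{*,k})$. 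Maximizing over $k$ yields $\gamma(f)\geq\log\rho(f_*)$, and the $C^\infty$ corollary $h_\top(f)=\gamma(f)$ finishes the proof.

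The main obstacle I anticipate is the choice of $\omega$: if $f^*_{*,k}$ on $H^k(M,\RR)$ has non-trivial Jordan blocks at an eigenvalue of maximal modulus, one must select $[\omega]$ in the appropriate generalized eigenspace and $\alpha$ in a matching generalized eigenspace so that the pairing is nonzero; the polynomial Jordan factors are then harmlessly absorbed into the $\limsup^{1/n}$. This is a finite-dimensional linear-algebra point, with no dynamics beyond the preceding corollary.
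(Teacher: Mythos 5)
Your proof is correct and takes essentially the same route as the paper, which disposes of this corollary in one sentence: combine the preceding corollary $h_\top(f)=\gamma(f)$ with the fact that a norm on $H_k(M,\RR)$ bounds the volume of representing cycles from below, hence $\gamma(f)\geq\log\rho(f_*)$. You have simply unpacked that one-liner: the de Rham pairing $\bigl|\int_{f^n\circ c}\omega\bigr|\leq\|\omega\|_\infty\sum_i|a_i|\vol(f^n\circ\phi_i)$ is precisely what makes the mass/$\ell^1$ lower bound work, the subadditivity step plus a pigeonhole along a subsequence correctly extracts a single $k$-disk $\phi_i$ realizing the growth, and your final remark about choosing $[\omega]$ and $\alpha$ in matching generalized eigenspaces (equivalently, noting that a fixed finite basis $\omega_1,\dots,\omega_m$ of $H^k_{dR}(M)$ satisfies $\|f_{*,k}^n\alpha\|\leq C\max_j|\langle\omega_j,f_{*,k}^n\alpha\rangle|$) is exactly the standard finite-dimensional point needed to guarantee a single form works along a subsequence.
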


\subsection{Resolution entropies}

The previous results of Yomdin and more can be obtained by computing a growth
rate taking into account the full structure of singular disks. A variant of this idea is
explained in Gromov's Bourbaki Seminar \cite{Gromov} on Yomdin's results.
We build on \cite{BuzziPhD}.

\begin{definition}
Let $r\geq1$. Let $\phi:Q^k\to M$ be a $C^r$ singular $k$-disk. A \new{$C^r$-resolution} $\mathcal R$ of order $n$ of
$\phi$ is a collection of $C^r$ maps $\psi_\omega:Q^k\to Q^k$, for $\omega\in\Omega$ with
$\Omega$ a finite collection of words of length at most $n$ with the following properties.
For each $\omega\in\Omega$, let $\Psi_\omega:=\psi_{\sigma^{|\omega|-1}\omega}\circ\dots\circ\psi_\omega(Q^k)$.
We require:
  \begin{enumerate}
   \item $\bigcup_{|\omega|=n} \Psi_\omega(Q^k)=Q^k$;
   \item $\|\psi_\omega\|_{C^r}\leq 1$ for all $\omega\in\Omega$;
   \item $\|f^{|\omega|}\circ\Psi_\omega\|_{C^r}\leq 1$ for all $\omega\in\Omega$,
 \end{enumerate}
The \new{size} $|\mathcal R|$ of the resolution is the number of words in $\Omega$
with length $n$.
\end{definition}

Condition (2) added in \cite{BuzziPhD} much simplifies the link between resolutions 
and entropy. It no longer relies on Newhouse application of Pesin theory and becomes
straightforward:

\begin{fact}\label{fact:resolution-cover}
Let $\mathcal R:=\{\psi_\omega:Q^k\to Q^k:\omega\in\Omega\}$ be a $C^r$-resolution of order
$n$ of $\phi:Q^k\to M$. Let $\eps>0$ and $Q^k_\eps$ be $\eps$-dense in $Q^k$, i.e., 
$Q^k\subset \bigcup_{t\in Q^k_\eps} B(x,\eps)$. Then
 $$
    \{\Psi_\omega(t):t\in Q^k_\eps\text{ and }\omega\in\Omega\text{ with }|\omega|=n\}
      \text{ is a $(\eps,n)$-cover of } \phi(Q^k).
 $$
\end{fact}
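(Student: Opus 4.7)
The plan is to verify the $(\eps,n)$-cover property pointwise. Given $x\in\phi(Q^k)$, write $x=\phi(s)$ with $s\in Q^k$. Condition~(1) gives $\omega\in\Omega$ with $|\omega|=n$ and $s'\in Q^k$ such that $\Psi_\omega(s')=s$; by $\eps$-density, I pick $t\in Q^k_\eps$ with $\|t-s'\|\leq\eps$. It then suffices to show that $\phi\circ\Psi_\omega(t)$ lies in the dynamical ball $B_f(\eps,n,x)=B_f(\eps,n,\phi\Psi_\omega(s'))$, that is, to establish
$$d\bigl(f^j(\phi\Psi_\omega(t)),\, f^j(\phi\Psi_\omega(s'))\bigr)\leq\eps \qquad\text{for every } 0\leq j<n.$$

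The core of the argument is a chain-rule bound. For each such $j$, factor
$$\Psi_\omega \;=\; \Psi_{\omega'}\circ\Theta_j,$$
where $\omega':=\sigma^{n-j}\omega$ is the length-$j$ suffix of $\omega$ and $\Theta_j:=\psi_{\sigma^{n-j-1}\omega}\circ\dots\circ\psi_\omega$ collects the remaining $n-j$ inner maps. This decomposition is dictated by the author's convention $\Psi_\omega=\psi_{\sigma^{|\omega|-1}\omega}\circ\dots\circ\psi_\omega$. Composing with $f^j$ and $\phi$,
$$f^j\circ\phi\circ\Psi_\omega \;=\; \bigl(f^j\circ\phi\circ\Psi_{\omega'}\bigr)\circ\Theta_j.$$
By condition~(3) applied to $\omega'$ (a word of length $j$, which lies in $\Omega$ since $\Omega$ is forced to be closed under shifts by the very definition of $\Psi_\omega$), the left factor has $C^r$ size at most $1$, hence Lipschitz constant at most $1$. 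By condition~(2), each of the $n-j$ factors of $\Theta_j$ has $C^r$ size $\leq 1$ and so is $1$-Lipschitz. Composition multiplies Lipschitz constants, so $f^j\circ\phi\circ\Psi_\omega$ is itself $1$-Lipschitz, which yields the displayed estimate from $\|t-s'\|\leq\eps$.

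There is no serious obstacle; the only delicate points are notational. First, condition~(3) must be read as $\|f^{|\omega|}\circ\phi\circ\Psi_\omega\|_{C^r}\leq 1$, since $\Psi_\omega$ lands in $Q^k$ while $f$ acts on $M$. Second, the $C^r$-size conventions from the appendix must be set up so that $\|\cdot\|_{C^r}\leq 1$ really does imply unit Lipschitz constant and multiplies under composition; this is the role of the technicalities deferred to Appendix~\ref{appendix:Cr-size}. Third, the edge case $j=0$ is handled by the same estimate (with $\Psi_{\omega'}$ an empty composition and the full Lipschitz bound coming from $\Theta_0$ together with the $C^r$ size of $\phi$, which can be absorbed by an appropriate normalization of the size). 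The substantive content is nothing more than a multiplication of $1$-Lipschitz factors — no Pesin theory or ergodic input — which is precisely the simplification that condition~(2) was added to produce, as the author emphasizes just before the statement.
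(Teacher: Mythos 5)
The paper states this Fact without proof, remarking only that condition~(2) makes it ``straightforward''; your argument supplies exactly the chain-rule/Lipschitz computation the author must have had in mind. You correctly locate the typos (the missing $\phi$ in condition~(3) and in the displayed covering set), the factorization $\Psi_\omega = \Psi_{\sigma^{n-j}\omega}\circ\Theta_j$ is the right one, and the bound $\mathrm{Lip}(f^j\circ\phi\circ\Psi_\omega)\leq\mathrm{Lip}(f^j\circ\phi\circ\Psi_{\omega'})\cdot\mathrm{Lip}(\Theta_j)\leq 1$ from conditions~(3) and~(2) is precisely what makes the collection an $(\eps,n)$-cover. One small point worth making explicit rather than waving at: the $j=0$ case requires $\phi$ itself to be $1$-Lipschitz, i.e.\ $\|\phi\|_{C^r}\leq 1$; this holds either because one takes $\emptyset\in\Omega$ with $\Psi_\emptyset=\mathrm{id}$ so that condition~(3) applied to the empty word reads $\|\phi\|_{C^r}\leq 1$, or because in all uses the disk is drawn from the standard uniform family $D^k_r$ of unit $C^r$ size. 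You flagged this, which is the right instinct.
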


On the other hand, the notion of resolution induces entropy-like quantities:

\begin{definition}
Let $1\leq r<\infty$ and let $f:M\to M$ be a $C^r$ self-map of a compact
manifold.
Let $R_f(C^r,n,\phi)$ be the minimal size of a $C^r$-resolution of
order $n$ of a $C^r$ singular disk $\phi$.
The \new{resolution entropy} of $\phi$ is:
 $$
    h_R(f,\phi):=\limsup_{n\to\infty} \frac1n\log R_f(C^r,n,\phi).
 $$
If $\mathcal D$ is a collection of $C^r$ singular disks, its $C^r$ resolution entropy
is
 $$
    h_{R,C^r}(f,\mathcal D) := \sup_{\phi\in\mathcal D} h_R(f,\phi)
 $$
and its $C^r$ uniform resolution entropy is:
 $$
    H_{R,C^r}(f,\mathcal D) := \limsup_{n\to\infty} \frac1n \log R_f(C^r,n,\mathcal D)
 $$
where $R_f(C^r,n,\mathcal D):= \max_{\phi\in\mathcal D} R_f(C^r,n,\phi)$. We set:
 $$
   h^{k,C^r}_R(f)=h_{R,C^r}(f,\mathcal D^k_r) \text{ and }
   H^{k,C^r}_R(f)=H_{R,C^r}(f, D^k_r).
 $$
\end{definition}

The following is immediate but very important:

\begin{fact}\label{fact:Rres-sub-mult}
Let $1\leq r<\infty$ and $0\leq k\leq d$.
Let $f:M\to M$ be a $C^r$ self-map of a compact $d$-dimensional manifold.
The sequence $n\mapsto R_f(C^r,n,D^k_r)$ is sub-multiplicative:
 $$
   R_f(C^r,n+m,D^k_r) \leq R_f(C^r,n,D^k_r)R_f(C^r,m,D^k_r).
 $$ 
\end{fact}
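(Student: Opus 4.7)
The plan is to concatenate resolutions. Given any $\phi \in D^k_r$, I would first choose a $C^r$-resolution $\mathcal R_1 = \{\psi_\omega : \omega \in \Omega_1\}$ of $\phi$ of order $n$ achieving the minimum $R_f(C^r, n, \phi)$ of words of length $n$. The crucial feature to exploit is condition (3): for each such word $\omega$ of length $n$, the iterated piece $f^n\circ\Psi_\omega$ has $C^r$ size at most $1$ and is therefore itself a member of the standard uniform family $D^k_r$. Hence one may recursively pick, for each such $\omega$, an order-$m$ $C^r$-resolution $\mathcal R_{2,\omega} = \{\psi'_{\omega,\tau} : \tau \in \Omega_{2,\omega}\}$ of the disk $f^n\circ\Psi_\omega$, of size at most $R_f(C^r, m, D^k_r)$.

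The combined resolution would be indexed by concatenated words $\omega\tau$, with the associated subdivision map (at the $Q^k \to Q^k$ level) given by $\Psi_\omega \circ \Psi'_{\omega,\tau}$. Condition (1) is immediate from compounding the covering properties $\bigcup_\tau \Psi'_{\omega,\tau}(Q^k) = Q^k$ and $\bigcup_\omega \Psi_\omega(Q^k) = Q^k$. Condition (2) is inherited letter-by-letter from the two resolutions. For condition (3) on a concatenated word of length $n + j$ with $1 \leq j \leq m$, I would invoke the factorization
\[
f^{n+j} \circ \Psi_\omega \circ \Psi'_{\omega,\tau} \;=\; f^j \circ (f^n \circ \Psi_\omega) \circ \Psi'_{\omega,\tau},
\]
whose $C^r$ size is bounded by $1$ by property (3) of $\mathcal R_{2,\omega}$ applied to its base disk $f^n\circ\Psi_\omega \in D^k_r$. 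For intermediate words lying entirely in $\Omega_1$, condition (3) is given directly by $\mathcal R_1$.

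Counting the words of length $n+m$: the size of the combined resolution is at most $\sum_\omega |\mathcal R_{2,\omega}| \leq R_f(C^r, n, \phi)\cdot R_f(C^r, m, D^k_r)$. Taking the supremum over $\phi\in D^k_r$ yields the claimed sub-multiplicativity. I do not anticipate any substantive obstacle; the one point that requires care is the verification of condition (3) for the intermediate words straddling the join between the two stages, which the displayed factorization handles by regrouping iterates and appealing to property (3) of the second-stage resolution. The argument really only uses the tree-structure of resolutions, together with the fact that property (3) reduces the resolution of an iterated piece to a resolution of a unit-size disk: exactly the compositional structure that makes $R_f(C^r,\cdot,D^k_r)$ sub-multiplicative.
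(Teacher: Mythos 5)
Your proof is correct and is exactly the concatenation argument that the paper has in mind when it declares the fact ``immediate'': condition (3) guarantees that each terminal piece $f^n\circ\phi\circ\Psi_\omega$ of an order-$n$ resolution is itself a unit-size disk in $D^k_r$, so an order-$m$ resolution of it can be grafted on, and the factorization $f^{n+j}\circ\phi\circ\Psi_\omega\circ\Psi'_{\omega,\tau}=f^j\circ(f^n\circ\phi\circ\Psi_\omega)\circ\Psi'_{\omega,\tau}$ is precisely the right way to verify condition (3) across the join. The only cosmetic remark is that the paper's definition writes $\|f^{|\omega|}\circ\Psi_\omega\|_{C^r}$ where $\|f^{|\omega|}\circ\phi\circ\Psi_\omega\|_{C^r}$ must be meant (otherwise the types do not match), and you have implicitly and correctly adopted that reading.
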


The key technical result of Yomdin's theory can be formulated as follows:

\begin{proposition}\label{prop:CardRes}
Let $1\leq r<\infty$ and $\alpha>0$. Let $f:M\to M$ be a $C^r$ self-map of a compact manifold.
There exist constants $C',C(r,\alpha),\eps_0(r,\alpha)$ with the following property.
For any $C^r$ singular disk $\phi$, any number $0<\eps<\eps_0(r,\alpha)$ and any 
integer $n\geq1$, 
 $$
   C'\eps^k r_f(\eps,n,\phi) \leq R_f(C^r,n,\phi)
      \leq C(r,\alpha) \Lip(f)^{(\frac kr +\alpha)n} r_f(\eps,n,\phi).
 $$
\end{proposition}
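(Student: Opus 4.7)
This follows directly from Fact \ref{fact:resolution-cover}. Take $Q^k_\eps$ to be a standard $\eps$-net in $Q^k$, whose cardinality is at most $c_k\eps^{-k}$ for a dimensional constant $c_k$. For any $C^r$-resolution $\mathcal R$ of $\phi$ of order $n$, Fact \ref{fact:resolution-cover} produces an $(\eps,n)$-cover of $\phi(Q^k)$ of cardinality at most $|\mathcal R|\cdot c_k\eps^{-k}$. Minimizing over resolutions yields $r_f(\eps,n,\phi)\leq c_k\eps^{-k}R_f(C^r,n,\phi)$, i.e., the claimed inequality with $C':=1/c_k$.

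\textbf{Upper bound.} The strategy is to combine an iterated Yomdin reparametrization scheme with a partition of $Q^k$ based on the $(\eps,n)$-Bowen structure of $\phi(Q^k)$. First I would pick a minimal $(\eps,n)$-cover $\{x_1,\ldots,x_N\}$ of $\phi(Q^k)$, so $N=r_f(\eps,n,\phi)$, and split $Q^k=\bigcup_i A_i$ with $\phi(A_i)\subset B_f(\eps,n,x_i)$. For each $i$ the plan is to build a $C^r$-resolution of order $n$ covering $A_i$ of size at most $C(r,\alpha)\Lip(f)^{(k/r+\alpha)n}$, using Yomdin's one-step reparametrization lemma: given any $C^r$ map $g:Q^k\to M$ with $\|g\|_{C^r}\leq 1$ and any $\alpha'>0$, there exist affine contractions $\psi_1,\ldots,\psi_m:Q^k\to Q^k$ with $\|\psi_j\|_{C^r}\leq 1$, $\bigcup_j\psi_j(Q^k)=Q^k$, and $\|f\circ g\circ\psi_j\|_{C^r}\leq 1$, where $m\leq C_1(r,\alpha')\Lip(f)^{k/r+\alpha'}$. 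Iterating this lemma $n$ times yields a resolution satisfying all three conditions of the definition, of total size at most $(C_1\Lip(f)^{k/r+\alpha'})^n$. Summing over $i$ gives $R_f(C^r,n,\phi)\leq N\cdot(C_1\Lip(f)^{k/r+\alpha'})^n$, and choosing $\alpha'$ slightly smaller than $\alpha$ allows absorbing $C_1^n$ into $\Lip(f)^{(\alpha-\alpha')n}$ once $\Lip(f)>1$ and $n$ is large enough (the case $\Lip(f)\leq 1$ and small $n$ being handled by enlarging the constant $C(r,\alpha)$).

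\textbf{Main obstacle.} The principal difficulty is Yomdin's one-step reparametrization lemma itself, in particular the sharp exponent $k/r$ rather than the naive $k$. Its proof subdivides $Q^k$ into cubes of side $\sim\Lip(f)^{-1/r}$, reparametrizes affinely into $Q^k$, and uses the Taylor approximation of $f\circ g$ at order $r$: a rescaling by $\Lip(f)^{-1/r}$ shrinks each of the $r$ relevant derivatives by a factor $\Lip(f)$, and the exponent $k/r$ is just the combinatorial count of such sub-cubes tiling $Q^k$. A secondary point is the verification that condition (3) of the resolution holds at every prefix along the iteration, which is automatic since the one-step lemma is invoked precisely to enforce this bound at each step.
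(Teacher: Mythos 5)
Your treatment of the lower bound is correct and matches the paper's (one-line) indication: Fact \ref{fact:resolution-cover} immediately produces an $(\eps,n)$-cover of $\phi(Q^k)$ of cardinality $|\mathcal R|\cdot\#Q^k_\eps$, and $\#Q^k_\eps\lesssim\eps^{-k}$ gives the claim. Note that the paper itself does not give a proof of the upper bound; it refers the reader to \cite{BuzziPhD}, so there is no in-text argument to compare against for that half.

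The upper bound, as you have sketched it, has a genuine gap. Your plan is to iterate a one-step reparametrization lemma with count $m\leq C_1(r,\alpha')\,\Lip(f)^{k/r+\alpha'}$, obtaining a resolution of size $\bigl(C_1\Lip(f)^{k/r+\alpha'}\bigr)^n=C_1^n\,\Lip(f)^{(k/r+\alpha')n}$, and then to ``absorb $C_1^n$ into $\Lip(f)^{(\alpha-\alpha')n}$''. That absorption requires $C_1\leq\Lip(f)^{\alpha-\alpha'}$, which cannot hold for general $f$: the constant $C_1$ in any version of Yomdin's reparametrization lemma is bounded below by dimensional constants (and by quantities like $\|f\|_{C^r}/\Lip(f)$), whereas $\Lip(f)^{\alpha-\alpha'}$ can be made arbitrarily close to $1$ by taking $\Lip(f)$ close to $1$ and $\alpha-\alpha'$ small. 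Since the proposition is stated for an arbitrary $C^r$ self-map of a compact manifold, the absorption step fails, and one is left with an $e^{O(n)}$ error that is not present in the claimed estimate. Enlarging the prefactor $C(r,\alpha)$ cannot fix this, because $C(r,\alpha)$ is not allowed to depend on $n$.

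There is a second, related problem: your decomposition of $Q^k$ into pieces $A_i$ with $\phi(A_i)\subset B_f(\eps,n,x_i)$ is introduced but never actually used. The per-piece bound you try to prove, $R_f(C^r,n,\cdot)\leq C(r,\alpha)\Lip(f)^{(k/r+\alpha)n}$, makes no reference to the dynamical ball constraint, and if it were true one could take $N=1$ and conclude $h^{k,C^r}_R(f)\leq(k/r)\lip(f)$, hence (via the lower bound) $h^{k,C^r}_\top(f)\leq(k/r)\lip(f)$ for all $k$ and all $r$; for $k=d$ and $r$ large this would force $h_\top(f)=0$ for every $C^\infty$ map, which is absurd. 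The factor $r_f(\eps,n,\phi)$ in Yomdin's estimate is not a harmless multiplicative afterthought: the construction must exploit, at every depth $m\leq n$, the fact that each piece $f^m\circ\phi\circ\Psi_\omega$ is confined to a single $(\eps)$-ball, so that the combinatorial branching caused by the non-$\Lip(f)$ part of the reparametrization constant is charged against the covering number rather than exponentiated. That bookkeeping (and the use of the semialgebraic reparametrization lemma to keep the per-cube constant from compounding) is precisely the content the paper defers to \cite{BuzziPhD}, and it is what your sketch is missing.
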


Remark that the above constants depend on $f$. The first inequality follows
from Fact \ref{fact:resolution-cover}. The second is the core of Yomdin theory,
we refer to \cite{BuzziPhD} for details.

%\begin{lemma}
%Let $r\geq1$, $\phi:Q^k\to M$ be a $C^r$ disk and some integer $n\geq1$.
%Given some $n\geq1$, a resolution $\mathcal R$ satisfying the following property,
%which we call \new{local minimality}, must have mini
%
%For any other $C^r$ resolution $\mathcal R'=(\phi'_\omega)_{\omega\in\Omega'}$,
%if $\phi'_{\sigma^j\omega}=\phi_{\sigma^j\omega}$ for all $j=0,\dots,|\omega|$, then 
% $$\#\{\alpha\in\Omega:\sigma\alpha=\omega\} \leq \#\{\alpha\in\Omega':\sigma\alpha=\omega\}.$$ 
%\end{lemma}
%
%\begin{proof}
%We proceed by contradiction: there exists a resolution $\mathcal R$ with
%$|\mathcal R|$ minimum which does not possess the above minimality property. 
%We assume the order $n$ of the resolution to be minimum.
%
%Clearly $n\geq2$. There is some $\omega\in\Omega$ such that, for some other resolution 
%$\mathcal R'$ of order $n$ of the same disk
% $$
%   \#\{\alpha\in\Omega:\sigma\alpha=\omega\} > \#\{\alpha\in\Omega':\sigma\alpha=\omega\}.
% $$
% 
%\end{proof}

\section{Properties of Dimensional Entropies}

We turn to various properties of dimensional entropies, most of which can
be shown using resolution entropy and its submultiplicativity.

\subsection{Link between Topological and Resolution Entropies}

We start by observing that Proposition \ref{prop:CardRes} links the topological and resolution
entropies.

\begin{corollary}
For all positive integers $r,k$, any collection of $C^r$ $k$-disks $\mathcal D$ and any $C^r$ self-map
$f$ on a manifold  equipped with a $C^r$ size:
 \begin{align*}
   h_\top(f,\mathcal D) \leq h_{R,C^r}(f,\mathcal D) \leq  h_\top(f,\mathcal D) + \frac kr \log\Lip(f)\\
   H_\top(f,\mathcal D) \leq H_{R,C^r}(f,\mathcal D) \leq  H_\top(f,\mathcal D) + \frac kr \log\Lip(f).
 \end{align*}
\end{corollary}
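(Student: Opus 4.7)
The plan is to derive this corollary as a direct consequence of Proposition \ref{prop:CardRes} by taking logarithms, dividing by $n$, and then passing successively to $\limsup_{n\to\infty}$, $\eps\to 0$, $\alpha\to 0$, and finally to the supremum over $\phi\in\mathcal D$. All four inequalities reduce to the same two-sided estimate in Proposition \ref{prop:CardRes}; the only real care needed is the distinction between the pointwise and uniform versions, which amounts to the order in which we take the supremum over disks relative to the $\limsup$ in $n$.

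First I would prove the non-uniform statement. Fix $\phi\in\mathcal D$ and $\alpha>0$, and choose $\eps\in(0,\eps_0(r,\alpha))$. From the left inequality of Proposition \ref{prop:CardRes}, $\log r_f(\eps,n,\phi) \leq \log R_f(C^r,n,\phi) + O(1)$, so dividing by $n$ and passing to $\limsup$ gives $\limsup_n\frac{1}{n}\log r_f(\eps,n,\phi) \leq h_R(f,\phi)$. Since this holds for all small $\eps$, we obtain $h_\top(f,\phi) \leq h_R(f,\phi)$. Taking $\sup_{\phi\in\mathcal D}$ yields the first inequality. For the upper bound, the right inequality of Proposition \ref{prop:CardRes} gives, for each fixed $\eps$ and $\alpha$,
\[
\limsup_{n\to\infty}\frac{1}{n}\log R_f(C^r,n,\phi) \leq \left(\tfrac{k}{r}+\alpha\right)\log\Lip(f) + \limsup_{n\to\infty}\frac{1}{n}\log r_f(\eps,n,\phi).
\]
The right-hand $\limsup$ is bounded above by $h_\top(f,\phi)$ (it is non-decreasing as $\eps\to 0$ and converges to it), so $h_R(f,\phi) \leq (\tfrac{k}{r}+\alpha)\log\Lip(f) + h_\top(f,\phi)$. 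Taking $\sup_{\phi\in\mathcal D}$ and then letting $\alpha\to 0$ gives the second inequality.

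For the uniform version, the same argument applies after first taking the supremum over $\phi\in\mathcal D$ in the pointwise bounds of Proposition \ref{prop:CardRes}. Since the constants $C'$, $C(r,\alpha)$, and $\eps_0(r,\alpha)$ do not depend on $\phi$, we get
\[
C'\eps^k \sup_{\phi\in\mathcal D} r_f(\eps,n,\phi) \leq R_f(C^r,n,\mathcal D) \leq C(r,\alpha)\Lip(f)^{(\tfrac{k}{r}+\alpha)n}\sup_{\phi\in\mathcal D} r_f(\eps,n,\phi).
\]
Taking $\frac{1}{n}\log$ and $\limsup_n$ now yields directly, after $\eps\to 0$ and $\alpha\to 0$, the two uniform inequalities.

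There is no real obstacle here: the content is entirely in Proposition \ref{prop:CardRes}, and the corollary is a mechanical passage to growth rates. The only point that requires a moment of thought is verifying that the bound $\limsup_n \frac{1}{n}\log r_f(\eps,n,\phi) \leq h_\top(f,\phi)$ used in the upper bound is valid for any fixed admissible $\eps$, which follows from the monotonicity in $\eps$ of the Bowen--Dinaburg expression for $h_\top(f,\phi)$.
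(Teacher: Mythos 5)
Your proof is correct and is exactly the mechanical passage from Proposition \ref{prop:CardRes} to growth rates that the paper leaves implicit (the paper states the corollary with no proof, merely pointing out that it "links the topological and resolution entropies"). The one point worth making explicit, which you do handle correctly, is that for the uniform version the supremum over $\phi\in\mathcal D$ must be taken \emph{inside} the $\limsup_n$, which is legitimate precisely because the constants $C'$, $C(r,\alpha)$, $\eps_0(r,\alpha)$ in Proposition \ref{prop:CardRes} are independent of $\phi$; and for the non-uniform upper bound, the monotonicity of $\eps\mapsto\limsup_n\frac1n\log r_f(\eps,n,\phi)$ is what lets you replace that $\limsup$ by $h_\top(f,\phi)$ before sending $\alpha\to 0$.
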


If the disks in $\mathcal D$ are $C^\infty$, then, for $r\leq s<\infty$,
 $$
   h_{R,C^r}(f,\mathcal D) \leq h_{R,C^s}(f,\mathcal D) \leq h_\top(f,\mathcal D) + \frac ks \log\Lip(f).
 $$
Letting $s\to\infty$, we get:

\begin{corollary}\label{cor:hR-htop}
If $f$ is $C^\infty$, then, for all $1\leq r<\infty$,
 $$
    h_{R,C^r}(f,\mathcal D^k_\infty) =  h_\top(f,\mathcal D^k_\infty).
 $$
The same holds for uniform topological entropy.
\end{corollary}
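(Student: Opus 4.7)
The proof is essentially laid out in the chain of inequalities displayed just above the corollary, so my plan is to justify each link and then pass to the limit.

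First I would verify the monotonicity statement $h_{R,C^r}(f,\mathcal D)\leq h_{R,C^s}(f,\mathcal D)$ for $r\leq s$ when $\mathcal D$ consists of $C^\infty$ disks. This should follow from the fact that the $C^r$ size is bounded above by the $C^s$ size (a standard fact about the $C^r$-sizes defined in the appendix, with the natural normalization chosen). Consequently, any $C^s$-resolution of order $n$ of $\phi$ automatically satisfies the $C^r$ constraints of conditions (2) and (3) in the definition, so it is also a $C^r$-resolution. This gives $R_f(C^r,n,\phi)\leq R_f(C^s,n,\phi)$ and hence the desired monotonicity; the same holds for the uniform variant.

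Next, I would apply the previous corollary with smoothness index $s$ (finite) to the $C^\infty$ family $\mathcal D^k_\infty\subset\mathcal D^k_s$, obtaining
\begin{equation*}
 h_{R,C^s}(f,\mathcal D^k_\infty)\leq h_\top(f,\mathcal D^k_\infty)+\frac ks\log\Lip(f).
\end{equation*}
Combined with Step~1 (for any fixed $r\leq s$), this yields
\begin{equation*}
 h_{R,C^r}(f,\mathcal D^k_\infty)\leq h_\top(f,\mathcal D^k_\infty)+\frac ks\log\Lip(f).
\end{equation*}
Since the left-hand side does not depend on $s$, letting $s\to\infty$ gives the non-trivial inequality $h_{R,C^r}(f,\mathcal D^k_\infty)\leq h_\top(f,\mathcal D^k_\infty)$.

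The reverse inequality is immediate from the first inequality of the previous corollary, namely $h_\top(f,\mathcal D^k_\infty)\leq h_{R,C^r}(f,\mathcal D^k_\infty)$. Combining the two directions gives the claimed equality. The argument for the uniform version $H$ is identical, using the uniform versions of each inequality. The only step requiring a little care is the monotonicity in Step~1, which is not stated explicitly in the excerpt but is a straightforward consequence of the way $C^r$-sizes are defined; everything else is a direct application of the previous corollary plus taking $s\to\infty$.
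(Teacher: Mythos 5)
Your proof is correct and follows exactly the route laid out in the paper: the monotonicity $h_{R,C^r}\leq h_{R,C^s}$ for $r\leq s$ (because $\|\cdot\|_{C^r}\leq\|\cdot\|_{C^s}$ makes every $C^s$-resolution a $C^r$-resolution), combined with the previous corollary's bound $h_{R,C^s}\leq h_\top+\frac ks\log\Lip(f)$, and then letting $s\to\infty$ with the reverse inequality already in hand. This is precisely the chain of inequalities displayed just before the corollary in the paper, with the uniform case handled identically.
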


\subsection{Gap between Uniform and Ordinary Dimensional Entropies}

Yomdin theory gives the following relation:

\begin{proposition}\label{prop:h-and-H}
Let $1\leq r<\infty$ and $f:M\to M$ be a $C^r$ self-map of a compact $d$-dimensional manifold.
For each $0\leq k\leq d$,
 $$
    h^{k,C^r}_\top(f) \leq H^{k,C^r}_\top(f) \leq h^{k,C^r}_\top(f)+\frac kr \lip(f)
 $$
and the same holds for the resolution entropies $h^{k,C^r}_R(f)$ and $H^{k,C^r}_R(f)$.

In particular, in the $C^\infty$ smooth case, all the versions of the dimensional entropies agree: $h^k_\top(f)=H^k_\top(f)=h^k_R(f)=H^k_R(f)=\lim_{r\to\infty} H^{k,C^r}_\top(f)$.
\end{proposition}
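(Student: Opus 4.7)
The first inequality $h^{k,C^r}_\top(f)\leq H^{k,C^r}_\top(f)$ and its resolution analogue are immediate: for any sequence $\phi\mapsto a_n(\phi)$ one has $\sup_\phi\limsup_n a_n(\phi)\leq\limsup_n\sup_\phi a_n(\phi)$. For the upper bound within $\frac{k}{r}\lip(f)$, the plan is to shuttle through resolution entropy using Proposition~\ref{prop:CardRes} and to exploit the sub-multiplicativity from Fact~\ref{fact:Rres-sub-mult}.

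First, applying Proposition~\ref{prop:CardRes} and Fact~\ref{fact:resolution-cover} pointwise to each $\phi\in D^k_r$, dividing by $n$, taking $\limsup$, letting $\eps\downarrow0$, and then taking $\sup_\phi$, one obtains
\begin{equation*}
 h^{k,C^r}_\top(f)\;\leq\;h^{k,C^r}_R(f)\;\leq\;h^{k,C^r}_\top(f)+(k/r+\alpha)\lip(f)
\end{equation*}
for every $\alpha>0$; passing the $\sup_\phi$ inside first (permitted because $C(r,\alpha)$ in Proposition~\ref{prop:CardRes} is uniform on $D^k_r$) yields the same chain with capital $H$'s. Thus topological and resolution entropies lie within $\frac{k}{r}\lip(f)$ of each other, separately at the non-uniform and at the uniform level. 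Note this alone does \emph{not} prove the proposition, which compares non-uniform versus uniform.

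To bridge the uniform and non-uniform entropies, the plan is to establish the equality $H^{k,C^r}_R(f)=h^{k,C^r}_R(f)$; combined with the preceding sandwiches this gives $H^{k,C^r}_\top\leq H^{k,C^r}_R=h^{k,C^r}_R\leq h^{k,C^r}_\top+\frac{k}{r}\lip(f)$, i.e.\ the topological statement, while the resolution statement $h^{k,C^r}_R\leq H^{k,C^r}_R\leq h^{k,C^r}_R+\frac{k}{r}\lip(f)$ becomes trivial. By Fact~\ref{fact:Rres-sub-mult} and Fekete's lemma, $H^{k,C^r}_R(f)=\inf_n\frac{1}{n}\log R_f(C^r,n,D^k_r)$ is a true limit. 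For each large $n$ I would pick a near-maximizer $\phi_n\in D^k_r$ with $R_f(C^r,n,\phi_n)\geq\frac{1}{2}R_f(C^r,n,D^k_r)$ and, by Arzela-Ascoli compactness of the $C^r$-bounded family $D^k_r$ in $C^{r-1}$, extract a subsequential limit $\phi^*\in D^k_r$. The main obstacle is the accompanying semi-continuity step: since iterates of $f$ amplify $C^0$-perturbations exponentially, one cannot directly conclude that $R_f(C^r,n_j,\phi^*)$ inherits the rate of $R_f(C^r,n_j,\phi_{n_j})$. I would handle this by transporting the comparison through the dynamic ball counts $r_f(\eps,n,\cdot)$---using Proposition~\ref{prop:CardRes} to pass from $R_f$ to $r_f$ at cost $\Lip(f)^{(k/r+\alpha)n}$, the lower-semi-continuity of $r_f(\eps,n,\cdot)$ under Hausdorff convergence of images (at the cost of halving $\eps$), and Proposition~\ref{prop:CardRes} again to return to $R_f$---with a diagonal choice of $(\eps_j,n_j)$ making $n_j$ grow slowly compared to the $C^0$-approximation rate of $\phi_{n_j}\to\phi^*$. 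The $C^\infty$ conclusion then follows by combining the bound with Lemma~\ref{lem:h-Cinfty} and letting $r\to\infty$ to eliminate the $\frac{k}{r}\lip(f)$ correction, identifying the four versions of the dimensional entropy.
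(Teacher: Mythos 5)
Your reduction of the nontrivial inequality to the identity $H^{k,C^r}_R(f)=h^{k,C^r}_R(f)$ is a genuinely different route from the paper's, and it does not close. The sticking point is exactly the one you flag, and the proposed diagonal choice of $(\eps_j,n_j)$ cannot repair it. The disk $\phi_n$ is chosen so that $R_f(C^r,n,\phi_n)$ is large \emph{at time $n$}; this carries no information about $R_f(C^r,m,\phi_n)$ or $r_f(\eps,m,\phi_n)$ for $m\ll n$. Meanwhile, semi-continuity of $m\mapsto r_f(\eps,m,\cdot)$ under $C^0$-convergence of disks requires $\Lip(f)^m\,d_{C^0}(\phi_{n_j},\phi^*)$ to stay small, so the usable time scale is pinned near $|\log d_{C^0}(\phi_{n_j},\phi^*)|/\log\Lip(f)$, while the scale at which $\phi_{n_j}$ carries its complexity is $n_j$. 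You cannot dictate the $C^0$-rate at which an Arzel\`a--Ascoli subsequence converges, so there is no way to force these scales to line up. In effect you are trying to prove, inline, a version of the topological variational principle; the paper shows (Theorem \ref{thm:top-var-pcp}) that this requires the additional Pliss-type Proposition \ref{prop:pliss-tree}, whose whole purpose is to locate within a deep resolution tree a sub-disk whose growth is near the target rate \emph{from time zero}, precisely so that the rate survives a $C^0$-limit. Note also that $H^{k,C^r}_R=h^{k,C^r}_R$ for finite $r$ is strictly stronger than what the proposition asserts (it only claims the two are within $\frac{k}{r}\lip(f)$ of each other), a further sign the route is mistargeted.

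The paper's actual argument is more elementary and entirely finitary, which is how it sidesteps the obstacle. It proves directly that $H^{k,C^r}_R(f)\le h^{k,C^r}_\top(f)+\frac kr\lip(f)$. Fix $\alpha>0$. For each $\phi\in D^k_r$ choose a \emph{finite} time $n_\phi$ so large that $r_f(\eps_0,n_\phi,\phi)\le e^{(h^{k,C^r}_\top(f)+\alpha)n_\phi}$ and $C(r,\alpha)\le e^{\alpha n_\phi}$. Because $n_\phi$ is a fixed finite time, this covering estimate persists on a $C^0$-neighborhood $\mathcal U_\phi$ of $\phi$: the amplification factor $\Lip(f)^{n_\phi}$ is a fixed constant, not one running off to infinity along a sequence. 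By $C^0$-relative compactness of $D^k_r$, finitely many $\mathcal U_{\phi_1},\dots,\mathcal U_{\phi_K}$ cover; set $N:=\max_j n_{\phi_j}$. Proposition \ref{prop:CardRes} converts the covering estimate into a resolution of order $n_{\phi_j}\le N$ and size $\le e^{(h^{k,C^r}_\top(f)+\frac kr\lip(f)+2\alpha)n_{\phi_j}}$ for every $\psi\in D^k_r$, and — this is where the structure of resolutions is exploited — each leaf $f^{n_{\phi_j}}\circ\Psi_\omega$ is again a disk of unit $C^r$-size by condition~(3) in the definition of a resolution, so the step can be iterated. One thereby builds, for every $n$ and every $\psi\in D^k_r$, a resolution of order $\ge n$ of size $\le\exp\bigl((h^{k,C^r}_\top(f)+\frac kr\lip(f)+2\alpha)(n+N)\bigr)$, and letting $n\to\infty$, $\alpha\to 0$ finishes. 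In other words, the sub-multiplicativity you invoke abstractly via Fekete is used \emph{constructively}, by concatenating resolutions. Your opening sandwiches between $h_\top$ and $h_R$ (and their uniform versions) via Proposition \ref{prop:CardRes}, and your final $r\to\infty$ step for the $C^\infty$ case using Lemma \ref{lem:h-Cinfty}, are both fine; it is the bridge between the uniform and non-uniform quantities that has to be the compactness-plus-iteration argument rather than a near-maximizer limit.
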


\begin{proof}
It is obvious that the uniform entropies dominate ordinary ones. By Fact \ref{fact:resolution-cover},
$h^{k,C^r}_\top(f)\leq h^{k,C^r}_R(f)$ and $H^{k,C^r}_\top(f)\leq H^{k,C^r}_R(f)$. Therefore it
is enough to show:
 \begin{equation}\label{eq:proof-unif-ent}
    H^{k,C^r}_R(f) \leq h^{k,C^r}_\top(f)+\frac kr \lip(f).
 \end{equation}
Let $\alpha>0$. Let $\eps_0>0$ as in Proposition
\ref{prop:CardRes}. This proposition defines a number $C(r,\alpha)$.
By definition, for every $\phi\in D^k_r$, there exists $n_\phi<\infty$ such that
$$r_f(\eps_0,n_\phi,\phi)\leq e^{(h^{k,C^r}_\top(f)+\alpha)n_\phi}.$$
We can arrange it so that this holds for all $k$-disks $\psi$ in some
$C^0$ neighborhood $\mathcal U_\phi$ of $\phi$. We also assume $n_\phi$ so large
that $C(r,\alpha)\leq e^{\alpha n_\phi}$.

By Proposition \ref{prop:CardRes}, each such $\psi$ admits a resolution with size at most
 $$
  r_f(\eps,n_\phi,\psi)\times C(r,\alpha) \Lip(f)^{\frac kr n_\phi}
    \leq e^{(h^{k,C^r}_R(f)+\frac kr\lip(f)+ 2\alpha)n_\phi}.
 $$

$D^k_r$ is relatively compact in the $C^0$ topology, hence there is a finite cover
$D^k_r\subset \mathcal U_{\phi_1}\cup\dots\cup\mathcal U_{\phi_K}$. Let $N:=\max n_{\phi_j}$.
It is now easy to build, for each $n\geq0$ and each $\psi\in D^k_r$ a $C^r$ resolution
$\mathcal R$ of order $n$ with:
 $$
   |\mathcal R| \leq \exp(h^{k,C^r}_\top(f)+\frac kr\lip(f)+ 2\alpha)(n+N).
 $$
(\ref{eq:proof-unif-ent}) follows by letting $\alpha$ go to zero.
\end{proof}

\subsection{Continuity properties}

\begin{proposition}
We have the following upper semicontinuity properties:
 \begin{enumerate}
  \item $f\mapsto H^{k,C^r}_R(f)$ is upper semicontinuous in the $C^r$ topology for all $1\leq r<\infty$;
  \item $f\mapsto H^k_\top(f)$ is upper semicontinuous in the $C^\infty$ topology;
  \item the defect in upper semi-continuity of $f\mapsto H^{k,C^r}_\top(f)$ at $f=f_0$ is at most
$\frac{k}{r}\lip(f_0)$:
 $$
    \limsup_{f\to f_0} H^{k,C^r}_\top(f) \leq H^{k,C^r}_\top(f_0)+\frac{k}{r}\lip(f_0).
 $$
 \end{enumerate}
\end{proposition}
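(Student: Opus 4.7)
The plan is to prove part (1) first, then read off (3) from (1) using the Corollary $H^{k,C^r}_\top\leq H^{k,C^r}_R\leq H^{k,C^r}_\top+\tfrac{k}{r}\lip(f)$, and finally deduce (2) from (3) by sending $r\to\infty$ and invoking Lemma~\ref{lem:h-Cinfty}.

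For (1), the starting point is Fact~\ref{fact:Rres-sub-mult}, whose submultiplicativity combined with Fekete's lemma gives the representation
$$
  H^{k,C^r}_R(f)=\inf_{n\geq 1}\frac{1}{n}\log R_f(C^r,n,D^k_r).
$$
An infimum of upper semicontinuous functions is upper semicontinuous, so it suffices to show that for each fixed $n$ there is a $C^r$-neighborhood $U_n$ of $f_0$ on which
$$
   R_f(C^r,n,D^k_r)\leq C_0\cdot R_{f_0}(C^r,n,D^k_r)
$$
with a constant $C_0=C_0(k,r)$ independent of $n$. Indeed, $\limsup_{f\to f_0}H^{k,C^r}_R(f)\leq \tfrac{\log C_0}{n}+\tfrac{1}{n}\log R_{f_0}(C^r,n,D^k_r)$ for every $n$, and the right side tends to $H^{k,C^r}_R(f_0)$ as $n\to\infty$.

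To produce $U_n$ I would start from a near-optimal resolution $\{\psi_\omega\}$ realizing $R_{f_0}(C^r,n,\phi)$ for a given $\phi\in D^k_r$. The difficulty is that an optimal resolution can saturate the constraint $\|f_0^{|\omega|}\circ\Psi_\omega\|_{C^r}\leq 1$, so even a tiny $C^r$ perturbation of $f_0$ could violate it. I buy slack by a single extra level of linear subdivision: replace each length-$n$ composition $\Psi_\omega$ by the $2^k$ compositions $\Psi_\omega\circ L_i$, where $L_1,\dots,L_{2^k}\colon Q^k\to Q^k$ are affine maps whose images cover $Q^k$ and which scale derivatives of order $j\geq 1$ by $2^{-j}$. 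This gains a fixed fraction of slack in every $C^r$ constraint while increasing the resolution size by at most $2^k$, so $C_0=2^k$ works. Continuity of $f\mapsto f^m$ in $C^r$ for each $m\leq n+1$ then ensures that, in a small enough $C^r$-neighborhood of $f_0$, the induced perturbation of each of the finitely many constraints is smaller than the slack, and the subdivided resolution is valid for $f$. Uniformity in $\phi$ is handled by the standard finite-cover trick already used in the proof of Proposition~\ref{prop:h-and-H}: $D^k_r$ is relatively compact in $C^0$, so finitely many $C^0$-neighborhoods, each equipped with a single modified resolution for a representative disk, cover everything.

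For (3), the Corollary yields $H^{k,C^r}_\top(f)\leq H^{k,C^r}_R(f)\leq H^{k,C^r}_\top(f)+\tfrac{k}{r}\lip(f)$, so applying (1) to the middle term gives
$$
  \limsup_{f\to f_0}H^{k,C^r}_\top(f)\leq H^{k,C^r}_R(f_0)\leq H^{k,C^r}_\top(f_0)+\tfrac{k}{r}\lip(f_0).
$$
For (2), by Lemma~\ref{lem:h-Cinfty} and monotonicity, $H^k_\top(f)=\inf_r H^{k,C^r}_\top(f)$; since $C^\infty$-convergence implies $C^r$-convergence, (3) gives for every finite $r$
$$
  \limsup_{f\to f_0,\,C^\infty}H^k_\top(f)\leq H^{k,C^r}_\top(f_0)+\tfrac{k}{r}\lip(f_0),
$$
and letting $r\to\infty$ kills the defect using $H^{k,C^r}_\top(f_0)\to H^k_\top(f_0)$. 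The main obstacle is the subdivision step in (1): one must check that the $C^r$ size on $M$ built in the appendix scales as expected under precomposition with linear contractions and that $\|f^m-f_0^m\|_{C^r}\to 0$ controls all finitely many $m\leq n+1$ constraints uniformly. Both are essentially routine given the definitions, but they encapsulate the only non-trivial work.
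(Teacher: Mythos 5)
Your argument is essentially identical to the paper's: part (1) follows from Fekete's lemma together with the submultiplicativity of resolution numbers (Fact~\ref{fact:Rres-sub-mult}) and a linear-subdivision argument showing $R_g(C^r,n,D^k_r)\le 2^k R_{f_0}(C^r,n,D^k_r)$ for $g$ $C^r$-close to $f_0$; part (3) is then read off from (1) via the two-sided comparison between $H^{k,C^r}_R$ and $H^{k,C^r}_\top$; and part (2) follows from (3) via Lemma~\ref{lem:h-Cinfty}. The paper dispatches the subdivision step with the single parenthetical ``use a linear subdivision,'' while you usefully spell out the slack-buying mechanism and the need for $C^r$-continuity of $f\mapsto f^m$ for $m\le n$, but the route is the same.
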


\begin{proof}
We prove (1).
The sub-multiplicativity of resolution numbers observed in Fact \ref{fact:Rres-sub-mult}
implies that: $H_R^{k,C^r}(f) = \inf_{n\geq1} \frac1n \log R_f(C^r,n,D^k_r)$. 
For each fixed positive integer $n$, $R_g(C^r,n,D^k_r)\leq 2^k R_f(C^r,n,D^k_r)$ for
any $g$ $C^r$-close to $f$ (use a linear subdivision). Thus $f\mapsto H_R^{k,C^r}(f)$
is upper semi-continuous in the $C^r$ topology.

We deduce (3) from (1).
Let $f_n\to f$ in the $C^r$ topology. By the preceding, $H_R^{k,C^r}(f)\geq\limsup_{n\to\infty}
H_R^{k,C^r}(f_n)$. By Proposition \ref{prop:h-and-H}, $H_\top^k(f)\geq H_R^{k,C^r}(f)-\frac kr \lip(f)$.

(2) follows from (3) using Lemma \ref{lem:h-Cinfty}. 
\end{proof}

On the other hand, $f\mapsto H^k_\top(f)$ fails to be lower semi-continuous except for interval
maps for which topological entropy is lower semi-continuous in the $C^0$ topology by a result
of Misiurewicz. In every case there are counter examples:

\begin{example}
For any $d\geq2$ and $1\leq k\leq d$, there is a self-map of a compact manifold of dimension $d$ 
at which $h^k_\top(f)$ fails to be lower semi-continuous.

Let $h:\RR\to[0,1]$ be a $C^\infty$ function such that $h(t)=1$ if and only if $t=0$.
Let $F_\lambda:[0,1]^d\to[0,1]^d$ be defined by
 $$
   F_\lambda(x_1,\dots,x_d)=(h(\lambda)x_1,4x_1 x_2(1-x_2),x_3,\dots,x_d)
 $$
Observe that if $\lambda\ne0$, then $h(\lambda)\in[0,1)$ and $F_\lambda^n(x_1,\dots,x_d)$
approaches $\{(0,0)\}\times[0,1]^{d-2}$ on which $F_\lambda$ is the identity. Therefore
$h_\top(F_\lambda)=0$. On the other hand $h_\top(F_0)=h_\top(x\mapsto 4x(1-x))=\log 2$.
Now, $H^k_\top(F_\lambda)\leq h_\top(F_\lambda)=0$ for any $\lambda\ne 0$ and
$H^k_\top(F_0)\geq h^k_\top(f)\geq h_\top(F_0,\{1\}\times[0,1]\times\{(0,\dots,0)\})=\log 2$ for any
$k\geq1$.
\end{example}

\section{Hyperbolicity from Entropies}

We now explain how the dimensional entropies can yield
dynamical consequences. We start by recalling an inequality
which will yield hyperbolicity at the level of measures.
Then we give the definition and main results for entropy-expanding
maps. Finally we explain the new notion of entropy-hyperbolicity
for diffeomorphisms.

\subsection{A Ruelle-Newhouse type inequality}

One of the key uses of dimensional entropies is to give
bounds on the exponents using the following estimate. 
This will give hyperbolicity of large entropy measure from
assumptions on these dimensional entropies.

\begin{theorem}\label{thm:RN-inequality} \cite{BuzziEE}
Let $f:M\to M$ be a $C^r$ self-map of a compact manifold with $r>1$.
Let $\mu$ be an ergodic, invariant probability measure with
Lyapunov exponents $\lambda_1(\mu)\geq\lambda_2(\mu)\geq\dots\geq\lambda_d(\mu)$
repeated according to multiplicity. Recall that $H^k_{\top}(f)$  is the
uniform $k$-dimensional entropy of $f$. Then:
 $$
   h(f,\mu) \leq H^k_\top(f)+\lambda_{k+1}(\mu)^++\dots+\lambda_d(\mu)^+.
 $$
\end{theorem}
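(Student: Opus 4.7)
My plan is to prove the theorem by a hybrid of Newhouse's volume-growth argument and Ruelle's classical inequality, splitting the Oseledets bundle at index $k$. For each small $\eta>0$, I would express $h(f,\mu)$ through Katok's entropy formula as the exponential growth rate of the minimal $(\eps,n)$-cover of a compact set of $\mu$-measure $>1-\eta$, and then build such a cover by handling the ``fast'' $k$ Oseledets directions with the definition of $H^{k}_\top(f)$ and the complementary $d-k$ ``slow'' directions with the Ruelle-type cocycle estimate that bounds the contribution of the slow bundle to the $(\eps,n)$-covering count by $\exp(n(\sum_{i>k}\lambda_i(\mu)^+ + \eta))$.

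Concretely, by Oseledets' theorem and Lusin's theorem I would fix a Pesin block $K\subset M$ with $\mu(K)>1-\eta$ on which the measurable splitting $T_xM=E^u(x)\oplus E^s(x)$, with $\dim E^u(x)=k$ grouping the $k$ largest exponents, depends continuously in $x$ and on which the slow-cocycle bound above holds uniformly for all $n\geq N_0$. At each $x\in K$ I would build a $C^\infty$ singular $k$-disk $\phi_x\in\mathcal D^k_\infty$ of uniformly bounded $C^\infty$ size, approximately tangent to $E^u(x)$, for instance by exponentiating a small affine $k$-disk of $E^u(x)$ in a fixed local chart. A small neighborhood $U_x$ of $x$ in $M$ is then a ``box'' parametrized (up to controlled distortion) by $\phi_x$ times a small $(d-k)$-cell tangent to $E^s$. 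An $(\eps,n)$-cover of $U_x\cap K$ is then produced by combining an $(\eps/2,n)$-cover of $\phi_x$ of cardinality at most $\exp(n(H^k_\top(f)+\eta))$, which is immediate from the definition of $H^k_\top(f)$ and uniform in $x$ because the $\phi_x$ have uniformly bounded size, with an $(\eps/2,n)$-cover of each transverse $(d-k)$-cell of cardinality at most $\exp(n(\sum_{i>k}\lambda_i^++\eta))$, which is the standard Ruelle-type cocycle estimate on $E^s$. A finite subcover of $K$ by the $U_x$ together with Katok's formula then gives $h(f,\mu)\leq H^k_\top(f)+\sum_{i>k}\lambda_i^++2\eta$; letting $\eta\to 0$ yields the claim.

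The main obstacle is the construction and tracking of the fast disks $\phi_x$. When $k$ does not equal the number of positive Lyapunov exponents, $E^u(x)$ is not tangent to any invariant foliation, so no Pesin ``fast manifold'' is available, and the smooth disk $\phi_x$ will develop nonlinear distortions under $f^n$ that must be tamed before the covering count on $\phi_x$ can be traded for the uniform bound $H^k_\top(f)$. This is exactly where the Yomdin--Gromov reparametrization lemma (Proposition \ref{prop:CardRes}) enters: applied to $\phi_x$, it converts the intrinsic $(\eps,n)$-count into the ambient one at the cost of a Yomdin error $\tfrac{k}{r}\lip(f)$. A secondary nuisance is that $f$ is only $C^r$, $r>1$, while $H^k_\top(f)$ refers to $C^\infty$ disks; one would therefore run the whole argument with the $C^r$ versions $H^{k,C^r}_\top(f)$, invoke Proposition \ref{prop:h-and-H} together with Lemma \ref{lem:h-Cinfty} to pass to the $C^\infty$ limit as $r\to\infty$, and absorb the Yomdin error. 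Once these regularity passages are arranged, the Ruelle estimate on the slow bundle plays exactly the role of the linearization step in the classical proof of Ruelle's inequality, and the argument concludes.
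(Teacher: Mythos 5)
Your overall plan --- Katok's covering characterization of $h(f,\mu)$, Pesin/Lyapunov blocks, a tangent-bundle splitting at index $k$, a $k$-disk handled by the uniform entropy $H^k_\top(f)$ and the complementary $d-k$ directions handled by a Ruelle-type estimate --- is the right shape, and it is consistent with the paper's remark that the proof is ``similar to Newhouse's and relies on Pesin theory.'' But the step you present as standard is precisely the one that fails, and your proposed remedy aims at the wrong obstacle.

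The gap is in the claim that each transverse $(d-k)$-cell admits an $(\eps/2,n)$-cover of cardinality at most $\exp(n(\sum_{i>k}\lambda_i^++\eta))$. For this you would need the iterates of the transverse cell to remain (roughly) tangent to the slow bundle $E^s$. They do not. Even when $\lambda_k>\lambda_{k+1}$, so that $E^s(x)=\bigoplus_{\lambda_i\leq\lambda_{k+1}}E_i(x)$ is $Tf$-invariant as a measurable bundle, a smooth $(d-k)$-cell tangent to $E^s(x)$ at $x$ is \emph{not} a Pesin manifold: the cone around $E^s$ is repelled by the graph transform because $E^u$ dominates. In Lyapunov coordinates the map is $A+O(\gamma)$; the $O(\gamma)$ error at step one feeds an $E^u$-component into the tangent of the cell, and over the remaining $j$ iterates this grows like $e^{j\lambda_1}$, so after $n$ steps the cell has spread at the fast rate $e^{n\lambda_1}$ rather than the slow rate $e^{n\lambda_{k+1}^+}$. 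This is exactly the same reason there is no ``center-stable'' Pesin manifold tangent to $E^s$ once some $\lambda_i>0$ for $i>k$ --- the obstruction you correctly flag on the fast side applies with equal force to the transverse side. The Yomdin--Gromov reparametrization (Proposition~\ref{prop:CardRes}) does not address this: it controls the complexity of iterates of the $k$-disk $\phi_x$, not the transverse spread, and in any case the covering count of $\phi_x$ needs no reparametrization at all --- $r_f(\eps,n,\phi_x)\leq\sup_{D\in D^k_\infty}r_f(\eps,n,D)$, which is precisely what the definition of $H^k_\top(f)$ bounds uniformly in $x$.

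Two secondary points. First, the smoothness passage you propose is circular: Lemma~\ref{lem:h-Cinfty} requires $f$ to be $C^\infty$, so one cannot take $r\to\infty$ for a map that is only $C^r$ with $r>1$ fixed; but this detour is also unnecessary, since the disks $\phi_x$ obtained by exponentiating affine pieces are $C^\infty$ with uniformly bounded size regardless of the smoothness of $f$, and the definition of $H^k_\top(f)$ applies directly. Second, to close the real gap one cannot literally multiply a $k$-disk cover at time $0$ by a transverse cover at time $0$: the Newhouse-type argument must repartition iteratively in Lyapunov charts, reabsorbing the fast-direction spread of the transverse plaques into fresh $k$-disk coverings at each step so that only the slow-direction subdivisions accumulate to $e^{n\sum_{i>k}\lambda_i^+}$. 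That iterative reabsorption is the genuinely Pesin-theoretic content of the argument, and it is what a static product decomposition omits.
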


\begin{remark}
For $k=0$ this reduces to Ruelle's inequality. For $k$ equal to the
number of nonnegative exponents, this is close to Newhouse inequality
(with $H^k_\top(f)$ replacing $\gamma^k(f)$). The proof is similar to
Newhouse's and relies on Pesin theory.
\end{remark}

\subsection{Entropy-expanding Maps}

We require that the full topological entropy only appear at
the full dimension.

\begin{definition}
A $C^r$ self-map $f:M\to M$ of a compact manifold is \new{entropy-expanding}
if:
 $$
    H^{d-1}_\top(f) < h_\top(f).
 $$
\end{definition}

An immediate class of examples is provided by the interval
maps with non-zero topological entropy.

\medbreak

The first consequence of this condition is that ergodic invariant
probability measures with entropy $>H^{d-1}_\top(f)$ have only
Lyapunov exponents bounded away from zero. This follows immediately
from Theorem \ref{thm:RN-inequality}. 

This also allows the application
of (a non-invertible version of) Katok's theorem, proving a logarithmic
lower bound on the number of periodic points.

Katok's proof gives 
horseshoes with topological entropy approaching $h_\top(f)$. In particular
these maps are points of lower semi-continuity of $f\mapsto h_\top(f)$
in any $C^r$ topology, $r\geq0$.
Combining with the upper semi-continuity from Yomdin theory we get:

\begin{proposition}\label{prop:hexp-open}
The entropy-expansion property is open in the $C^\infty$ topology.
\end{proposition}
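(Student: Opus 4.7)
The plan is to combine two one-sided continuity properties established (or sketched) just above: upper semi-continuity of $H^{d-1}_\top$ in the $C^\infty$ topology, and lower semi-continuity of $h_\top$ at the entropy-expanding map $f_0$. Since $f_0$ is entropy-expanding, the gap $\delta_0 := h_\top(f_0) - H^{d-1}_\top(f_0) > 0$ is strictly positive, so it suffices to show that each of these two quantities is perturbed by at most $\delta_0/3$ on some $C^\infty$-neighborhood of $f_0$.

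For the upper bound, part (2) of the continuity proposition immediately yields a $C^\infty$-neighborhood $\mathcal U_1$ of $f_0$ with $H^{d-1}_\top(f) \leq H^{d-1}_\top(f_0)+\delta_0/3$ for every $f \in \mathcal U_1$. For the lower bound I would exploit the Ruelle--Newhouse inequality of Theorem \ref{thm:RN-inequality}: take any ergodic $\mu$ with $h(f_0,\mu)$ close to $h_\top(f_0)$ (available by the variational principle, since entropy-expansion forces $h_\top(f_0)>H^{d-1}_\top(f_0)$). Applying the inequality with $k=d-1$ gives
$$\lambda_d(\mu) \geq h(f_0,\mu) - H^{d-1}_\top(f_0) > 0,$$
so \emph{every} Lyapunov exponent of $\mu$ is positive and bounded away from zero. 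Hence Katok's horseshoe theorem (in the non-invertible version mentioned after Theorem \ref{thm:Katok}, which needs only absence of zero exponents) produces a horseshoe $\Lambda_0$ with $h_\top(f_0|\Lambda_0) > h_\top(f_0) - \delta_0/3$.

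Hyperbolic basic sets persist and are conjugated under $C^1$-small (a fortiori $C^\infty$-small) perturbations, so there is a $C^\infty$-neighborhood $\mathcal U_2$ of $f_0$ and, for each $f \in \mathcal U_2$, a horseshoe $\Lambda_f$ for $f$ conjugate to $\Lambda_0$. Conjugacy preserves topological entropy, whence
$$h_\top(f) \geq h_\top(f|\Lambda_f) = h_\top(f_0|\Lambda_0) > h_\top(f_0) - \delta_0/3.$$
Combining on $\mathcal U_1\cap\mathcal U_2$ yields $h_\top(f) - H^{d-1}_\top(f) > \delta_0/3 > 0$, so $f$ is entropy-expanding.

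The only genuinely non-trivial input is the lower semi-continuity step, and within that the main point to check is that Katok's construction applies in the non-invertible setting used here; this is standard once all Lyapunov exponents are positive, and persistence of the resulting horseshoe under $C^1$-perturbation is classical. Everything else is a routine combination of the upper semi-continuity already proved from Yomdin's theorem with the Ruelle--Newhouse inequality cited above.
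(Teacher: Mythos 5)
Your proof is correct and takes essentially the same route as the paper: the paper also derives hyperbolicity of large-entropy measures from the Ruelle--Newhouse inequality with $k=d-1$, invokes the non-invertible version of Katok's theorem to produce horseshoes with entropy close to $h_\top(f_0)$ (hence lower semi-continuity of $h_\top$ at $f_0$), and combines this with the upper semi-continuity of $H^{d-1}_\top$ in the $C^\infty$ topology coming from Yomdin theory. Your quantitative $\delta_0/3$ bookkeeping is just a slightly more explicit version of the paper's sketch.
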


Thus we can use the following estimate

\begin{proposition} \cite{BuzziSMF}
The Cartesian product of a finite number of $C^\infty$ smooth interval maps,
each with nonzero topological entropy is entropy-expanding.
\end{proposition}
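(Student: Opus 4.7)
The plan is to establish $H^{d-1}_\top(f) < h_\top(f)$ by first bounding the volume growth of $(d-1)$-disks uniformly in the product setting, then transferring this bound to the dimensional entropy via Yomdin's theory. I would begin by noting that the product formula for topological entropy gives $h_\top(f) = \sum_i h_\top(f_i) > 0$, and would use as an input the Misiurewicz--Szlenk identity $\frac{1}{n}\log V(g^n) \to h_\top(g)$ for $C^\infty$ interval maps $g$ of positive entropy, where $V$ denotes total variation.

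For the volume bound, I would take a $C^r$ $(d-1)$-disk $\phi:Q^{d-1}\to M$ with $\|\phi\|_{C^r}\le 1$ and apply the Cauchy--Binet identity to $D(f^n\circ\phi)$, exploiting the block-diagonal form of $Df$ inherited from the product structure. Bounding $\sqrt{\sum a_\ell^2}\le\sum|a_\ell|$ gives
$$ \|\Lambda^{d-1}D(f^n\circ\phi)\|\le\sum_{\ell=1}^d\prod_{i\ne\ell}\bigl|(f_i^n)'(\pi_i\phi)\bigr|\cdot\bigl|\det D(\pi_{\hat\ell}\circ\phi)\bigr|. $$
Integrating over $Q^{d-1}$ and performing the change of variables $y=\pi_{\hat\ell}\circ\phi(x)$, with the preimage multiplicity of $\pi_{\hat\ell}\circ\phi:Q^{d-1}\to\prod_{i\ne\ell}I_i$ bounded uniformly in $\phi$ by a Yomdin--Gromov semi-algebraic argument (valid for $r$ large enough relative to the $C^r$ bound on $\phi$), I would obtain
$$ \vol(f^n\circ\phi)\le C\sum_{\ell=1}^d\prod_{i\ne\ell}V(f_i^n). $$
Applying the Misiurewicz--Szlenk asymptotics factor-by-factor then yields
$$ \limsup_n\frac{1}{n}\log\vol(f^n\circ\phi)\le h_\top(f)-\min_i h_\top(f_i), $$
and this estimate is uniform over $\phi\in D^{d-1}_r$.

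The remaining and most delicate step is to transfer this uniform volume bound into a bound on $H^{d-1}_\top(f)$. As the paper's Question explicitly records, the inequality $h^k_\top\le\gamma^k$ is open in general, so the transfer cannot be routine. What saves the argument is that the bound above is \emph{uniform} in $\phi$: the same semi-algebraic resolution technology underlying Proposition~\ref{prop:CardRes} converts a uniform volume estimate into a uniform bound on $R_f(C^r,n,D^{d-1}_r)$, paying only a $\frac{d-1}{r}\lip(f)$ correction. Combining with Proposition~\ref{prop:h-and-H} and letting $r\to\infty$ (justified by Lemma~\ref{lem:h-Cinfty} since $f$ is $C^\infty$) would then give
$$ H^{d-1}_\top(f)\le h_\top(f)-\min_i h_\top(f_i)<h_\top(f), $$
establishing entropy-expansion. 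The principal technical burden, which I would expect to consume the bulk of any full write-up and which is treated in \cite{BuzziSMF}, lies in two quantitative ingredients: the uniform multiplicity control for the projections $\pi_{\hat\ell}\circ\phi$ in the Cauchy--Binet step, and the uniform volume-to-resolution passage in the last step.
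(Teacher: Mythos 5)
The proposition is cited from \cite{BuzziSMF}, so the paper contains no proof to compare against directly; I will assess your argument on its own terms and against what I know of the cited paper's approach.

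Your Cauchy--Binet decomposition of $\|\Lambda^{d-1}D(f^n\circ\phi)\|$ exploiting the block-diagonal structure of $Df$ is sound, and, modulo the multiplicity control you flag, it gives a correct uniform bound $\gamma^{d-1}(f)\le h_\top(f)-\min_i h_\top(f_i)$. On the multiplicity issue itself: be careful, because for a single fixed $\ell$ the projection $\pi_{\hat\ell}\circ\phi$ can have arbitrarily large multiplicity even with $\|\phi\|_{C^r}\le 1$ (the "good" direction $\ell$ varies over $Q^{d-1}$), so the clean global change-of-variables you write is not literally available. It can be repaired by first running a Yomdin--Gromov decomposition of $Q^{d-1}$ into boundedly many pieces on each of which one chosen projection behaves algebraically, but this needs to be done before, not after, Cauchy--Binet, and it is not a routine footnote.

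The genuine gap, however, is the final transfer from the uniform volume bound to $H^{d-1}_\top(f)$. You acknowledge the difficulty but then assert that "the same semi-algebraic resolution technology underlying Proposition~\ref{prop:CardRes} converts a uniform volume estimate into a uniform bound on $R_f(C^r,n,D^{d-1}_r)$." Proposition~\ref{prop:CardRes} does no such thing: it compares $R_f$ with the \emph{covering number} $r_f$, not with volume, and the relation between covering number and volume for $k$-disks with $k\ge 2$ is precisely the content of the paper's explicit open Question following the Newhouse/Yomdin discussion. The paper even exhibits (in the appendix-level "Further Counter-Example") a $C^\infty$ $2$-disk in a $3$-manifold for which the uniform inequality $\eps^k r_f(\eps,n,\phi)\le C(\max_{m\le n}\vol(f^m\circ\phi)+\ldots)$ fails, i.e.\ the very estimate that makes your argument close in the curve case (equation~(\ref{eq:counterexample2})) is false already for $(d-1)$-disks when $d\ge 3$. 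So for $d\ge 3$ your argument requires an affirmative answer to an open question, which you cannot take for granted; for $d=2$ it does close, but then you have only re-derived the known surface case.

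The proof in \cite{BuzziSMF} does not route through volume growth at all. It is a direct covering-number argument: for a product $f_1\times\cdots\times f_d$, the $(\eps,n)$-Bowen balls are products of one-dimensional Bowen balls, so they form a combinatorial grid of roughly $\prod_i e^{nh_\top(f_i)}$ rectangles (using the Misiurewicz--Szlenk identification of interval entropy with branch growth). One then shows, using a Yomdin-type reparametrization into boundedly many "graph-like" pieces, that a $C^\infty$ $(d-1)$-disk with controlled $C^r$ size can meet at most $C\sum_\ell\prod_{i\ne\ell}e^{nh_\top(f_i)}$ of these cells, and iterating this within the resolution framework yields $H^{d-1}_\top(f)\le h_\top(f)-\min_i h_\top(f_i)$ directly, never touching $\gamma^{d-1}$. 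Your volume-based route would be a nice alternative precisely \emph{if} the paper's Question were settled; as written it does not constitute a proof.
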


To get dynamically interesting examples:

\begin{example}
For $|\eps|$ small enough, the plane map $F_\eps:(x,y)\mapsto(1-1.8x^2-\eps y^2 ,1-1.9y^2-\eps x^2)$
preserves $[-1,1]^2$ and its restriction to this set is entropy-expanding.
\end{example}

A sufficient condition, considered in a different approach by Oliveira and Viana \cite{Oliveira1,
Oliveira2} is the following:

\begin{lemma}
Let $f:M\to M$ be a diffeomorphism of a compact Riemanian manifold. Let
$\|\Lambda^kTf\|$ be the maximum over all $1\leq \ell\leq k$ and all $x\in M$
of the Jacobian of the restrictions of the differential $T_xf$ to any
$k$-dimensional subspace of $T_xM$. Then
 $$
    H^k_\top(f) \leq \log\|\Lambda^kTf\|.
 $$
In particular, $\log\|\Lambda^kT\|<h_\top(f)$ implies that $f$ is 
entropy-expanding. An even stronger condition is $(d-1)\lip(f)<h_\top(f)$.
\end{lemma}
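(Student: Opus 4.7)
The plan is to establish the key inequality $H^k_\top(f)\le\log\|\Lambda^kTf\|$; the two entropy-expansion assertions then follow almost immediately.

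First I would derive a uniform volume bound. By the chain rule together with the convention that $\|\Lambda^kTf\|$ dominates the $\ell$-Jacobian of $T_xf$ for every $1\le\ell\le k$ and every $x\in M$, one has
\[
\|\Lambda^\ell T(f^n\circ\phi)(t)\|\le\|\Lambda^kTf\|^n\cdot\|\Lambda^\ell T\phi(t)\|
\]
for any $C^r$ singular $\ell$-disk $\phi$ with $\ell\le k$. Integrating over $Q^\ell$ gives $\vol(f^n\circ\phi)\le C\,\|\Lambda^kTf\|^n$, uniformly over $\phi$ in the standard uniform family.

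Next I would transport this to the uniform resolution entropy by building, for each disk in the standard uniform family, a $C^r$-resolution of order $n$ inductively: given a current piece $\Psi$ with $\|f^j\circ\Psi\|_{C^r}\le1$, apply $f$ and subdivide $Q^k$ so that every sub-piece $\Psi'$ satisfies $\|f^{j+1}\circ\Psi'\|_{C^r}\le1$. Using a Yomdin-type subdivision in which the number of sub-cubes reflects the true $k$-volume growth $\vol(f^{j+1}\circ\Psi)\le\|\Lambda^kTf\|\cdot\vol(f^j\circ\Psi)$ rather than the pessimistic bound $\Lip(f)^k$, the multiplicative factor per iteration is $\|\Lambda^kTf\|$ up to a correction $\Lip(f)^{k\alpha/r}$ coming from the higher-order derivatives. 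Iterating yields
\[
H^{k,C^r}_R(f)\le\log\|\Lambda^kTf\|+\tfrac{k}{r}\lip(f)+\alpha.
\]
By Lemma \ref{lem:h-Cinfty} and the $C^\infty$ identification $H^k_\top(f)=\lim_{r\to\infty}H^{k,C^r}_R(f)$ from Proposition \ref{prop:h-and-H}, letting $r\to\infty$ and $\alpha\to0$ gives $H^k_\top(f)\le\log\|\Lambda^kTf\|$. Setting $k=d-1$, the hypothesis $\log\|\Lambda^{d-1}Tf\|<h_\top(f)$ immediately yields $H^{d-1}_\top(f)<h_\top(f)$, i.e., entropy-expansion; the stronger form follows from Hadamard's inequality $\|\Lambda^{d-1}Tf\|\le\Lip(f)^{d-1}$, which gives $\log\|\Lambda^{d-1}Tf\|\le(d-1)\lip(f)$.

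The main obstacle is the refined Yomdin subdivision in the second step. The standard Yomdin reparametrization, as used for instance in Lemma \ref{lem:h-Cinfty}, supplies a per-iteration factor $\Lip(f)^{kn/r}$, which would only give the crude bound $H^k_\top(f)\le k\lip(f)$. Extracting the sharp Jacobian-based factor requires a volume-adapted subdivision, exploiting that the actual $k$-volume growth is governed by $\|\Lambda^kTf\|$ rather than by the worst-case operator norm of $Tf$; keeping the $C^r$-norm of the pieces---and not just the Jacobian---under control through this non-uniform subdivision is the technical heart of the argument. Note that this route is needed precisely because the bound $h^k_\top(f)\le\max_\ell\gamma^\ell(f)$ (which would otherwise suffice, together with $\gamma^\ell(f)\le\log\|\Lambda^kTf\|$) is explicitly left open in the preceding section.
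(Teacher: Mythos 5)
Your proposal gets the first step (the chain-rule volume bound) and the two closing consequences right, but the bridge you identify between the volume bound and $H^k_\top(f)$ is precisely the step that is missing, and you cannot close it with the tools at hand. The ``volume-adapted Yomdin subdivision'' you describe---a reparametrization scheme whose per-iteration branching factor is $\|\Lambda^kTf\|$ rather than $\Lip(f)^k$---is not a variant of Proposition \ref{prop:CardRes}; it would be a genuine strengthening of Yomdin's theorem and, as you yourself note, would essentially settle the open question that $h^k_\top(f)\le\max_{0\le\ell\le k}\gamma^\ell(f)$. The obstruction is structural: the conditions defining a $C^r$-resolution control the $C^r$ size of every piece $\psi_\omega$ and of every forward image $f^{|\omega|}\circ\Psi_\omega$, and these are governed by all derivatives up to order $r$, not by $k$-volume. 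A curve of tiny length but huge $C^2$-norm already needs many reparametrization pieces, so a subdivision counted by volume alone cannot keep the resolution constraints satisfied. The gap you flag as the ``technical heart'' is thus a missing idea, not a technicality, and without it the proof does not go through.

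The paper takes a much more elementary route, one you should have looked for as soon as the Yomdin machinery forced you into an open problem: the lemma is proved by a variation on the classical $C^1$ proof of Ruelle's inequality, applied to the disk family rather than to a partition of $M$. Fix $\eps$ so small that $f$ is close to linear at scale $\eps$, and refine a cover of $\phi(Q^k)$ one iterate at a time. A piece of diameter at most $\eps$ lying inside a $k$-dimensional submanifold $S$ through $x$ has image under $f$ that is $O(\alpha\eps)$-close to the ellipsoid $T_xf\bigl(B(0,\eps)\cap T_xS\bigr)$, whose semi-axes are $\sigma_1\eps,\dots,\sigma_k\eps$ with $\sigma_i$ the singular values of $T_xf|_{T_xS}$; covering that ellipsoid by $\eps$-balls costs about $\prod_i\max(\sigma_i,1)\le\max(\|\Lambda^kTf\|,1)$ of them, up to a constant $C(k)$. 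Iterating and passing to $f^N$ with $N\to\infty$ kills the multiplicative constant and yields $H^{k,C^1}_\top(f)\le\log\|\Lambda^kTf\|$ directly, hence $H^k_\top(f)\le\log\|\Lambda^kTf\|$ since the uniform entropies are non-increasing in $r$. This argument needs neither resolutions nor $C^\infty$ smoothness of $f$---only $C^1$ regularity and the uniform $C^1$ bound built into the standard uniform family---so it is both self-contained and more general than your route, which quietly presupposes a result the paper explicitly leaves open.
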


The proof of this lemma is a variation on the classical proof of Ruelle's inequality.

\medbreak

We are able to analyze the dynamics of entropy-expanding maps with respect to large entropy measures
rather completely. 

\begin{theorem}\label{thm-hexp}
Let $f:M\to M$ be a $C^\infty$ self-map of a compact manifold. Assume that $f$
is entropy-expanding. Then:
 \begin{itemize}
  \item $f$ has finitely many maximum measures;
  \item its periodic points satisfies a multiplicative lower bound.
 \end{itemize}
\end{theorem}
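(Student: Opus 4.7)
The plan is to mimic the interval-map template described in Section~2: build an entropy-conjugacy between $f$ (restricted to the orbits relevant for large entropy measures) and a countable state Markov shift, and then invoke the Vere-Jones--Gurevi\v{c} theory. The first step is to extract uniform hyperbolicity of large entropy measures from Theorem~\ref{thm:RN-inequality}. Applied with $k=d-1$, it gives, for every ergodic invariant $\mu$,
$$
  h(f,\mu)\leq H^{d-1}_\top(f)+\lambda_d(\mu)^+,
$$
so $\lambda_d(\mu)\geq h(f,\mu)-H^{d-1}_\top(f)$. Since $\lambda_d$ is the smallest exponent of $\mu$, \emph{all} its Lyapunov exponents are bounded below by this quantity. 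Fix $\chi$ with $0<\chi<h_\top(f)-H^{d-1}_\top(f)$ (which is positive by the entropy-expansion hypothesis); then every ergodic $\mu$ with $h(f,\mu)>H^{d-1}_\top(f)+\chi$ is non-uniformly expanding with the uniform lower bound $\chi$ on every exponent.

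Second, I would build a countable state Markov shift $(\Sigma,\sigma)$ and a semi-conjugacy $\pi:\Sigma\to M$ that is an entropy-conjugacy for the subsystem made of measures of entropy $>H^{d-1}_\top(f)+\chi$. The alphabet indexes pairs (Pesin-block index, inverse branch of some fixed iterate $f^N$); the Markov property comes from linearization on Pesin blocks, whose geometry is controlled thanks to the uniform expansion from Step~1. The crucial dynamical input is that the combinatorial cost of inverse branches localized on submanifolds of dimension $\leq d-1$ grows at rate at most $H^{d-1}_\top(f)$: this follows from the dimensional resolution entropies, which by Proposition~\ref{prop:h-and-H} and Corollary~\ref{cor:hR-htop} all coincide with the $H^k_\top(f)$ in the $C^\infty$ setting, hence are $\leq H^{d-1}_\top(f)<h_\top(f)$. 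Consequently $\Sigma$ itself has Gurevi\v{c} entropy $h_\top(f)$.

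Third, the Vere-Jones--Gurevi\v{c} theory for countable Markov shifts closes the argument. At most finitely many topologically transitive components of $\Sigma$ have Gurevi\v{c} entropy $h_\top(f)$, since otherwise their disjoint union would exceed it. Each such component supports at most one measure of maximal entropy, and that measure exists because the uniform hyperbolicity from Step~1 yields the tail/positive-recurrence estimate. Pulling back via $\pi$ gives finitely many maximum measures for $f$. Gurevi\v{c}'s formula equidistributing periodic orbits on positive recurrent components then delivers the multiplicative lower bound on $\#\{x:f^nx=x\}$, once one verifies that periodic orbits of $f$ project from those of $\Sigma$ away from the entropy-negligible exceptional set, which is routine.

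The main obstacle is the second step: constructing a countable Markov shift entropy-conjugate to $f$ and establishing positive recurrence on its high-entropy components. For interval maps this is carried out by the classical Hofbauer tower; for entropy-expanding maps in higher dimensions one must weave Yomdin's resolution theory together with Pesin blocks in order to handle a \emph{countable} rather than finite alphabet. This is the technical heart of~\cite{BuzziEE}, and it is precisely here that the hypothesis $H^{d-1}_\top(f)<h_\top(f)$ becomes indispensable, by forcing the Gurevi\v{c} entropy of the shift to be concentrated on finitely many transitive components.
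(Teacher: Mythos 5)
The paper itself does not give a proof of this theorem — it only sketches the strategy in one paragraph ("The proof of the theorem involves a partition whose boundaries are pieces of smooth submanifolds, therefore of entropy bounded by $H^{d-1}_\top(f)$") and points to \cite{BuzziEE} and \cite{BuzziPQFT}. Your first step (applying Theorem~\ref{thm:RN-inequality} with $k=d-1$ to bound all Lyapunov exponents of large-entropy measures away from zero) is correct and matches the paper's indication. You are also right to flag the entropy-conjugacy to a countable Markov shift as the technical heart.

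However, your third step contains a genuine error. You write that at most finitely many transitive components of $\Sigma$ can have Gurevi\v{c} entropy $h_\top(f)$, "since otherwise their disjoint union would exceed it." This is false: Gurevi\v{c} entropy of a disjoint union is the supremum of the entropies of the components, not their sum, so a countable Markov shift can perfectly well have infinitely many transitive components each with full Gurevi\v{c} entropy $h_\top(f)$, and nothing prevents each from carrying its own maximum measure. The whole difficulty — and the reason for the "quasi-finite type" machinery in \cite{BuzziPQFT} — is precisely to rule this out. The finiteness does not come for free from Vere-Jones and Gurevi\v{c}; it comes from a quantitative Hofbauer-tower-style bound on the number of "incomplete" or "critical" pieces at each depth, and this is exactly where the inequality $H^{d-1}_\top(f)<h_\top(f)$ is used a second time: the boundary of the partition consists of smooth $(d-1)$-disks, so its entropy (and hence the exponential growth rate of critical cylinders) is controlled by $H^{d-1}_\top(f)$, hence strictly below $h_\top(f)$. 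This sub-exponential-relative-to-$h_\top(f)$ growth of the "bad" part of the alphabet is what forces finitely many full-entropy components and positive recurrence, not a naive counting argument. Your proposed route through Pesin blocks and inverse branches also diverges from the paper's — the paper's partition is constructed topologically from smooth submanifolds so that the dimensional-entropy bound on the boundary is available directly, whereas a Pesin-block construction would leave you without an obvious handle on the boundary's entropy.
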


This can be understood as generalization of the Markov
property which corresponds to partition having boundaries with essentially
finite forward or backward orbits. The proof of the theorem involves a partition
whose boundaries are pieces of smooth submanifolds, therefore of entropy
bounded by $H^{d-1}_\top(f)$.

In \cite{BuzziPQFT}, we are able to define a nice class of symbolic
systems, called \emph{puzzles of quasi-finite type}, which contains 
the suitably defined symbolic representations of
entropy-expanding maps satisfying a technical condition and
have the above properties. Moreover, their periodic points define zeta functions 
with meromorphic extensions and their natural extensions can be
classified up to entropy-conjugacy in the same way as interval maps.

\subsection{Entropy-Hyperbolicity}

Entropy-expanding maps are \emph{never diffeomorphisms}. Indeed,
they have ergodic invariant measures which have nonzero entropy
and only positive Lyapunov exponents. Wrt the {inverse diffeomorphism}
these measures have the same nonzero entropy but only negative
Lyapunov exponents, contradicting Ruelle's inequality.
Thus we need a different notion for diffeomorphism.

\begin{definition}
The \new{unstable (entropy) dimension} is:
 $$
   d_u(f):=\min\{0\leq k\leq d : H^k_\top(f)=h_\top(f) \}.
 $$
If $f$ is a diffeomorphism, then the \new{stable dimension} is: $d_s(f):=d_u(f^{-1})$
(if $f$ not a diffeomorphism we set $d_s(f)=0$).
\end{definition}

Observe that $f$ is entropy-expanding if and only if $d_u(f)$ coincides with
the dimension of the manifold.

\begin{lemma}
Let $f:M\to M$ be a $C^r$ self-map of a compact $d$-dimensional manifold with $r>1$. Then:
 $$
   d_u(f)+d_s(f) \leq d.
 $$
\end{lemma}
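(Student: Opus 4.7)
The plan is to apply Theorem \ref{thm:RN-inequality} simultaneously to $f$ and $f^{-1}$ against a single ergodic measure of near-maximal entropy, and then read off the conclusion from the ordering of Lyapunov exponents. First I would dispose of the trivial cases: if $f$ is not a diffeomorphism then $d_s(f)=0$ by definition and the bound follows from $d_u(f)\leq d$; and if $h_\top(f)=0$ then $H^0_\top(f)=0=h_\top(f)$ gives $d_u(f)=0$ and likewise $d_s(f)=0$. So I may assume $f$ is a diffeomorphism with $h_\top(f)>0$, which forces $d_u(f)\geq 1$ and $d_s(f)\geq 1$. By minimality in the definitions,
$$H^{d_u(f)-1}_\top(f)<h_\top(f) \quad\text{and}\quad H^{d_s(f)-1}_\top(f^{-1})<h_\top(f^{-1})=h_\top(f).$$
The variational principle together with affineness of $\mu\mapsto h(f,\mu)$ over ergodic decompositions then yields an ergodic $f$-invariant probability measure $\mu$ whose entropy strictly exceeds both of these two numbers simultaneously.

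Write $\lambda_1(\mu)\geq\cdots\geq\lambda_d(\mu)$ for the Lyapunov exponents of $\mu$, repeated according to multiplicity. Applying Theorem \ref{thm:RN-inequality} to $f$ with $k=d_u(f)-1$ gives
$$h(f,\mu)\leq H^{d_u(f)-1}_\top(f)+\sum_{i=d_u(f)}^{d}\lambda_i(\mu)^+.$$
Since the left-hand side strictly exceeds the first term on the right, the sum of positive parts is strictly positive; by monotonicity of the exponents this forces $\lambda_{d_u(f)}(\mu)>0$.

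Running the same argument for $f^{-1}$, whose Lyapunov exponents for $\mu$ in decreasing order are $-\lambda_{d-j+1}(\mu)$ for $j=1,\dots,d$, with $k=d_s(f)-1$ yields
$$h(f,\mu)=h(f^{-1},\mu)\leq H^{d_s(f)-1}_\top(f^{-1})+\sum_{j=d_s(f)}^{d}\bigl(-\lambda_{d-j+1}(\mu)\bigr)^+,$$
and the strict inequality forces the \emph{largest} term in this sum, namely $\bigl(-\lambda_{d-d_s(f)+1}(\mu)\bigr)^+$, to be positive, so $\lambda_{d-d_s(f)+1}(\mu)<0$. Combining the two sign conditions with the weak monotonicity of the $\lambda_i(\mu)$ forces $d_u(f)<d-d_s(f)+1$, i.e.\ $d_u(f)+d_s(f)\leq d$. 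The only delicate point is producing a single ergodic $\mu$ that beats both thresholds at once, but this is handled by the standard variational principle plus affineness of entropy under ergodic decomposition; everything else is an index-matching exercise on top of Theorem \ref{thm:RN-inequality}.
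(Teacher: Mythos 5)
Your proof is correct and follows essentially the same route as the paper: apply Theorem \ref{thm:RN-inequality} to $f$ and $f^{-1}$ for a single ergodic measure of entropy exceeding both $H^{d_u(f)-1}_\top(f)$ and $H^{d_s(f)-1}_\top(f^{-1})$ (whose existence is guaranteed by the variational principle), and deduce from the ordering of Lyapunov exponents that at least $d_u(f)$ are positive and at least $d_s(f)$ are negative. The paper's proof is terse and leaves the index bookkeeping and the trivial cases ($f$ not a diffeomorphism, $h_\top(f)=0$, $d_u(f)=0$) implicit; you spell these out explicitly, which is fine, but the mathematical content is the same.
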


\begin{proof}
Theorem \ref{thm:RN-inequality} implies that measures with entropy $>H^{d_u(f)-1}_\top(f)$ have
at least $d_u(f)$ positive exponents. The same reasoning
applied to $f^{-1}$ shows that such measures have at least $d_s(f)$ negative exponents. 
By the variational principle such measures exist. Hence $d_u(f)+d_s(f)\leq d$.
\end{proof}

We can now propose our definition:

\begin{definition}
A diffeomorphism such that $d_u(f)+d_s(f)=d$ is \new{entropy-hyperbolic}.
\end{definition}

Obviously surface diffeomorphisms with non-zero topological entropy
are entropy-hyperbolic.

\medbreak

Exactly as above, we obtain from Theorems \ref{thm:RN-inequality} and \ref{thm:Katok}:

\begin{theorem}
Let $f:M\to M$ be a $C^r$ diffeomorphism of some compact manifold with $r>1$.
Assume that $f$ is entropy-hyperbolic. Then:
 \begin{itemize}
  \item all ergodic invariant probability measures with entropy close enough to
the topological entropy have the absolute value of their Lyapunov exponents
bounded away from zero;
  \item their periodic points satisfy a logarithmic lower bound;
  \item they contain horseshoes with topological entropy arbitrarily close to
that of $f$.
 \end{itemize}
\end{theorem}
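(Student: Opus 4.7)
\medbreak

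\noindent\textbf{Proof Plan.} The plan is to combine Theorem~\ref{thm:RN-inequality} (applied to both $f$ and $f^{-1}$) with Katok's theorem, using the identity $d_u+d_s=d$ from entropy-hyperbolicity to squeeze the Lyapunov spectrum from both sides.

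First, set
\[
  h^* := \max\bigl(H^{d_u-1}_\top(f),\, H^{d_s-1}_\top(f^{-1})\bigr).
\]
By the definitions of $d_u(f)$ and $d_s(f)=d_u(f^{-1})$, neither $H^{d_u-1}_\top(f)$ nor $H^{d_s-1}_\top(f^{-1})$ equals the topological entropy of $f$ (using $h_\top(f^{-1})=h_\top(f)$), so $h^*<h_\top(f)$. I would then take any ergodic invariant probability measure $\mu$ with $h(f,\mu)>h^*$ and apply Theorem~\ref{thm:RN-inequality} with $k=d_u-1$:
\[
  h(f,\mu)-H^{d_u-1}_\top(f)\leq \sum_{i=d_u}^d \lambda_i(\mu)^+ \leq (d-d_u+1)\,\lambda_{d_u}(\mu)^+.
\]
Since $\lambda_1\geq\dots\geq\lambda_{d_u}$, this gives a uniform positive lower bound on the first $d_u$ exponents of $\mu$, controlled by $h(f,\mu)-h^*$. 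Applying the same reasoning to $f^{-1}$ (whose exponents are $-\lambda_d\geq\dots\geq-\lambda_1$) with $k=d_s-1$ yields a uniform positive lower bound on $-\lambda_j(\mu)$ for $j=d-d_s+1,\dots,d$. The key point is that $d_u+d_s=d$ makes these two ranges complementary, covering all indices $1,\dots,d$ and leaving no room for a zero exponent. This proves the first bullet.

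For the other two bullets, I would use the variational principle, together with the fact that ergodic components of a measure of maximal entropy have maximal entropy, to produce ergodic measures $\mu_n$ with $h(f,\mu_n)\to h_\top(f)$. For all sufficiently large $n$, $h(f,\mu_n)>h^*$, so by the previous step these $\mu_n$ have all Lyapunov exponents bounded away from zero. Now apply the general-dimensional version of Katok's theorem recalled right after Theorem~\ref{thm:Katok}: each $\mu_n$ yields a horseshoe $\Lambda_n$ with $h_\top(f|\Lambda_n)$ arbitrarily close to $h(f,\mu_n)$, hence arbitrarily close to $h_\top(f)$. This immediately settles the third bullet, and since a horseshoe $\Lambda_n$ contains at least $\exp(n\,h_\top(f|\Lambda_n))$ (up to a multiplicative constant) periodic points of period $n$, taking $n\to\infty$ along the $\Lambda_n$ yields
\[
  \limsup_{n\to\infty}\frac1n\log\#\{x\in M:f^n(x)=x\}\geq h_\top(f),
\]
which is the logarithmic lower bound of the second bullet.

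I do not anticipate a serious obstacle: the main theorem is a clean packaging of Theorem~\ref{thm:RN-inequality} and Katok's theorem through the balance condition $d_u+d_s=d$. The one point that requires care is ensuring that the lower bound on $|\lambda_i(\mu)|$ in the first bullet is truly uniform over the class of measures with $h(f,\mu)$ close to $h_\top(f)$; this is automatic from the explicit dependence of the bound on the gap $h(f,\mu)-h^*$, which is positive and uniformly large for measures whose entropy is sufficiently close to $h_\top(f)$.
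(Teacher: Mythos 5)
Your argument is correct and follows the paper's intended route exactly: the paper's one-line proof (``Exactly as above, we obtain from Theorems~\ref{thm:RN-inequality} and~\ref{thm:Katok}'') refers to applying the Ruelle--Newhouse inequality to both $f$ with $k=d_u-1$ and $f^{-1}$ with $k=d_s-1$, then invoking the general-dimensional Katok theorem on measures of nearly maximal entropy. The observation that $d_u+d_s=d$ makes the two index ranges $\{1,\dots,d_u\}$ and $\{d_u+1,\dots,d\}$ exactly complementary (so that no exponent can be near zero) is precisely the content of entropy-hyperbolicity, and your explicit bound $\lambda_{d_u}(\mu)^+\geq\bigl(h(f,\mu)-H^{d_u-1}_\top(f)\bigr)/(d-d_u+1)$ correctly delivers the required uniformity.
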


\begin{corollary}
The set of entropy-hyperbolic diffeomorphisms of a compact manifold
is open in the $C^\infty$ topology.
\end{corollary}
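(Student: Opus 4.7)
The plan is to show that the entropy-hyperbolicity condition $d_u(f)+d_s(f)=d$ persists under small $C^\infty$ perturbations of a given entropy-hyperbolic $f_0$. Write $d_u := d_u(f_0)$ and $d_s := d_s(f_0)$, so $d_u+d_s=d$. By the preceding lemma, $d_u(f)+d_s(f)\le d$ always holds, so it suffices to prove that, for every $f$ sufficiently $C^\infty$-close to $f_0$, one has $d_u(f)\ge d_u$ and $d_s(f)\ge d_s$.

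First I would check that $h_\top$ is continuous at $f_0$ in the $C^\infty$ topology. Upper semi-continuity is part (2) of the continuity proposition applied with $k=d$. For lower semi-continuity, the previous theorem (the hyperbolic version of Katok) provides, for any $\eta>0$, a horseshoe $\Lambda\subset M$ for $f_0$ with $h_\top(f_0|\Lambda)>h_\top(f_0)-\eta$. Horseshoes persist under $C^1$ (a fortiori $C^\infty$) perturbation with continuously varying entropy, so $h_\top(f)\ge h_\top(f_0)-2\eta$ for $f$ close enough to $f_0$. Hence $h_\top(f)\to h_\top(f_0)$ as $f\to f_0$.

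Next I would exploit the strict inequality $H^{d_u-1}_\top(f_0)<h_\top(f_0)$ built into the definition of $d_u$. Pick $\delta>0$ with $H^{d_u-1}_\top(f_0)<h_\top(f_0)-3\delta$. By part (2) of the continuity proposition, $f\mapsto H^{d_u-1}_\top(f)$ is upper semi-continuous in the $C^\infty$ topology; combined with the continuity of $h_\top$ established above, this gives $H^{d_u-1}_\top(f)<h_\top(f)-\delta$ for all $f$ in a $C^\infty$-neighborhood $\mathcal U$ of $f_0$. By the definition of $d_u(f)$ as the smallest $k$ with $H^k_\top(f)=h_\top(f)$, this forces $d_u(f)\ge d_u$ on $\mathcal U$. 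Applying the identical argument to $f_0^{-1}$ (observing that the inversion $f\mapsto f^{-1}$ is a $C^\infty$-homeomorphism of $\mathrm{Diff}^\infty(M)$, so it transports $C^\infty$-neighborhoods), we also get $d_s(f)\ge d_s$ on some $C^\infty$-neighborhood of $f_0$. Intersecting the two neighborhoods and using $d_u(f)+d_s(f)\le d=d_u+d_s$ yields equality throughout, so $f$ is entropy-hyperbolic.

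The only genuinely non-routine step is the continuity of $h_\top$ at $f_0$, where lower semi-continuity cannot be had in general but is supplied here by the persistent horseshoes of entropy close to $h_\top(f_0)$ produced by the hyperbolic Katok theorem; everything else is straightforward Yomdin-type semi-continuity combined with the elementary upper bound $d_u(f)+d_s(f)\le d$.
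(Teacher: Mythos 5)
Your proof is correct and follows the same route the paper takes (implicitly, since the corollary is stated without detailed proof): lower semicontinuity of $h_\top$ at entropy-hyperbolic maps via Katok-style persistent horseshoes, upper semicontinuity of $H^k_\top$ via Yomdin theory, applied to both $f$ and $f^{-1}$ and combined with the a priori inequality $d_u(f)+d_s(f)\le d$. The bookkeeping with $\delta$ and the observation that inversion is a homeomorphism of $\mathrm{Diff}^\infty(M)$ fill in exactly the details the paper leaves to the reader.
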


\subsection{Examples of Entropy-Hyperbolic Diffeomorphisms}
The techniques of \cite{BuzziSMF} yield:

\begin{lemma}
Linear toral automorphisms are entropy-hyperbolic
if and only if they are hyperbolic in the usual sense: no eigenvalue
lies on the unit circle.
\end{lemma}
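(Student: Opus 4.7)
The plan is to compute $d_u(A)$ and $d_s(A)$ explicitly in terms of the spectrum and then read off entropy-hyperbolicity. Order the eigenvalues of $A$ (with algebraic multiplicity) as $|\lambda_1|\ge\dots\ge|\lambda_d|$, set $u:=\#\{i:|\lambda_i|>1\}$, $s:=\#\{i:|\lambda_i|<1\}$, and recall that $h_\top(A)=\sum_{i=1}^{u}\log|\lambda_i|$. I will show $d_u(A)=u$ and, by the symmetric argument applied to $A^{-1}$, $d_s(A)=s$. Entropy-hyperbolicity $d_u(A)+d_s(A)=d$ then reduces to $u+s=d$, that is, no eigenvalue on the unit circle.

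For $d_u(A)\le u$ I exhibit one $u$-disk carrying the full topological entropy. Let $E^u\subset\RR^d$ be the generalized eigenspace of the eigenvalues of modulus greater than $1$ and let $D$ be a small smooth $u$-disk in a translate $E^u+p$ inside $\TT^d$. Since $E^u$ is $A$-invariant, $A^nD$ stays in a parallel translate with $u$-volume $|\det(A|_{E^u})|^n\vol(D)=e^{nh_\top(A)}\vol(D)$. On the other hand, for any $x\in E^u+p$ the intersection $B_A(\eps,n,x)\cap(E^u+p)$ lies in the ellipsoid $\{v\in E^u:\|A^{n-1}v\|\le\eps\}$, whose $u$-volume equals $\omega_u\eps^u|\det(A|_{E^u})|^{-(n-1)}=\omega_u\eps^u e^{-(n-1)h_\top(A)}$ (a centre off the leaf costs only a factor $2^u$). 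Comparing the two volumes gives $r_A(\eps,n,D)\gtrsim \eps^{-u}\vol(D)\,e^{(n-1)h_\top(A)}$, so $h_\top(A,D)\ge h_\top(A)$ and hence $H^u_\top(A)\ge h^u_\top(A)\ge h_\top(A)$; combined with $H^u_\top(A)\le H^d_\top(A)=h_\top(A)$ this forces $H^u_\top(A)=h_\top(A)$, so $d_u(A)\le u$.

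For the reverse inequality $d_u(A)\ge u$ I need $H^{u-1}_\top(A)<h_\top(A)$. Applying the lemma $H^k_\top(f)\le\log\|\Lambda^k Tf\|$ to $f=A^n$ (again linear, so the bound is the single constant $\log\|\Lambda^kA^n\|$) gives $H^k_\top(A^n)\le\log\|\Lambda^kA^n\|$. A Bowen-ball scale change — if $L=\|A\|$, then $B_{A^n}(\eps L^{-(n-1)},m,x)\subset B_A(\eps,nm,x)$, hence $r_A(\eps,nm,\phi)\le r_{A^n}(\eps L^{-(n-1)},m,\phi)$ — yields $H^k_\top(A^n)\ge nH^k_\top(A)$. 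Dividing by $n$, letting $n\to\infty$, and using Gelfand's formula $\|\Lambda^kA^n\|^{1/n}\to\rho(\Lambda^kA)=\prod_{i=1}^k|\lambda_i|$, I obtain
\[
  H^k_\top(A)\le\sum_{i=1}^{k}\log|\lambda_i|,\qquad 0\le k\le d.
\]
Taking $k=u-1$ and using $|\lambda_u|>1$ gives $H^{u-1}_\top(A)\le\sum_{i=1}^{u-1}\log|\lambda_i|<h_\top(A)$, so $d_u(A)=u$. Running the same argument on $A^{-1}$ (whose eigenvalues are the reciprocals of those of $A$) gives $d_s(A)=d_u(A^{-1})=s$, and the lemma follows. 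The only delicate step is the asymptotic $\|\Lambda^kA^n\|^{1/n}\to\prod_{i=1}^k|\lambda_i|$, which is a standard application of Gelfand's formula to the exterior power; the remainder combines a volume-growth lower bound on an unstable leaf with the exterior-power upper bound from the cited lemma.
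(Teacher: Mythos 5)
Your proof is correct and, since the paper does not actually prove this lemma (it only says that ``the techniques of \cite{BuzziSMF} yield'' it), you have produced a genuinely self-contained argument rather than a reproduction of one in the text. The structure is clean: you establish the exact formulas $d_u(A)=u$ and $d_s(A)=s$ (valid whether or not $A$ is hyperbolic), so that entropy-hyperbolicity $d_u+d_s=d$ becomes literally $u+s=d$, i.e.\ no spectrum on the unit circle. The lower bound $d_u(A)\le u$ via a disk in a translate of $E^u$ and a Lebesgue-measure count of Bowen balls restricted to the leaf is sound; the upper bound $d_u(A)\ge u$ via $H^k_\top(A^n)\ge n\,H^k_\top(A)$ (a Bowen-ball rescaling) together with the cited exterior-power estimate and Gelfand's formula is a nice way to beat the possible non-sharpness of $\|\Lambda^k A\|$ caused by Jordan blocks. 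Two small points worth spelling out if you write this up: (i) in the leaf argument, the containment of the Bowen-ball slice in the ellipsoid $\{v\in E^u:\|A^{n-1}v\|<\eps\}$ requires $\eps$ small enough (say $\eps<1/(2\|A\|)$) plus the easy induction $\|A^kv\|<\eps\Rightarrow\|A^{k+1}v\|<\|A\|\eps<1/2$, because the torus metric only coincides with the Euclidean one at small scale; (ii) the cited lemma actually gives $H^k_\top(A)\le\max_{1\le\ell\le k}\log\|\Lambda^\ell A\|$, so after Gelfand you get $H^k_\top(A)\le\max_{1\le\ell\le k}\sum_{i\le\ell}\log|\lambda_i|$, which for $k=u-1$ is still exactly $\sum_{i\le u-1}\log|\lambda_i|<h_\top(A)$ since all $|\lambda_i|>1$ for $i\le u-1$; your stated bound is correct in the range you use it but should be phrased with the $\max$. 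Neither of these affects the validity of the argument.
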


The following condition is easily seen to imply entropy-hyperbolicity:

\begin{lemma}\label{lem:unif-hh}
Let $f:M\to M$ be a diffeomorphism of a compact manifold of dimension
$d$. Assume that there are two integers $d_1+d_2=d$ such that:
 $$
   \log\|\Lambda^{d_1-1}Tf\|<h_\top(f) \text{ and }
   \log\|\Lambda^{d_2-1}Tf\|<h_\top(f).
 $$
Then $f$ is entropy-hyperbolic.
\end{lemma}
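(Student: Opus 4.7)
The plan is to reduce entropy-hyperbolicity to two dimensional lower bounds and then apply the preceding lemma (which asserts $H^k_\top(f) \leq \log\|\Lambda^k Tf\|$) symmetrically to $f$ and to $f^{-1}$. Since the immediately preceding lemma already guarantees $d_u(f) + d_s(f) \leq d$, and the assumption gives $d_1 + d_2 = d$, it suffices to establish the two matching lower bounds $d_u(f) \geq d_1$ and $d_s(f) \geq d_2$. These will sandwich $d_u(f) + d_s(f)$ between $d$ and $d$, giving the required equality.

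For the first inequality I would apply the preceding lemma with $k = d_1 - 1$: the hypothesis $\log\|\Lambda^{d_1-1}Tf\| < h_\top(f)$ immediately yields $H^{d_1-1}_\top(f) < h_\top(f)$. Since $d_u(f)$ is defined as the \emph{minimum} $k$ with $H^k_\top(f) = h_\top(f)$, a strict inequality at $k = d_1 - 1$ forces $d_u(f) \geq d_1$. For the second inequality I would apply the same preceding lemma to the diffeomorphism $f^{-1}$, with $k = d_2 - 1$ and using $h_\top(f^{-1}) = h_\top(f)$, obtaining $H^{d_2-1}_\top(f^{-1}) < h_\top(f)$, hence $d_s(f) = d_u(f^{-1}) \geq d_2$. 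Combining, $d = d_1 + d_2 \leq d_u(f) + d_s(f) \leq d$, so equality holds and $f$ is entropy-hyperbolic by definition.

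The argument is thus a direct two-line consequence of the previous lemma applied symmetrically, together with the minimality clause in the definition of $d_u$. The main point requiring care is notational rather than mathematical: the hypothesis is written with $Tf$ in both conditions, whereas the $f^{-1}$ step naturally invokes $\|\Lambda^{d_2-1}Tf^{-1}\|$. I would read the second condition as the corresponding bound for $f^{-1}$ (which is the version needed to feed the preceding lemma), as this is the only way the symmetry between $d_u$ and $d_s$ enters the proof. No Pesin theory, no variational principle, and no further estimates are needed beyond what the preceding lemma has already packaged.
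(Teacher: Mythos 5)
Your proof is correct and is the natural argument; the paper gives no proof at all for this lemma (it merely asserts it is ``easily seen''), so there is nothing to compare against beyond the obvious route you took: apply the bound $H^k_\top(\cdot)\leq\log\|\Lambda^kT(\cdot)\|$ to $f$ with $k=d_1-1$ and to $f^{-1}$ with $k=d_2-1$, invoke the minimality in the definition of $d_u$, and sandwich with the previously established inequality $d_u(f)+d_s(f)\leq d$. You were also right to flag the hypothesis: as literally printed, with $Tf$ in \emph{both} inequalities, the second condition only strengthens the first (it gives $H^{d_2-1}_\top(f)<h_\top(f)$, hence a larger lower bound on $d_u(f)$, but says nothing about $d_s(f)$), and since there is no general comparison between $\|\Lambda^\ell Tf\|$ and $\|\Lambda^\ell Tf^{-1}\|$ the stated hypothesis is genuinely insufficient without the correction. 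Reading the second inequality as $\log\|\Lambda^{d_2-1}Tf^{-1}\|<h_\top(f)$, which is clearly the intent (it is the only reading that exploits the symmetry $d_s(f)=d_u(f^{-1})$ and $h_\top(f^{-1})=h_\top(f)$), makes the argument close cleanly exactly as you wrote it.
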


\section{Further directions and Questions}

We discuss some developping directions and ask some questions.

\subsection{Variational Principles}

It seems reasonable to conjecture the following \emph{topological variational principle} for dimensional entropies,
at least for $C^\infty$ self-maps and diffeomorphisms:

{In each dimension, there is a $C^\infty$ disk with maximum topological entropy, i.e., $h^k_\top(f)$.}

Does it fail for finite smoothness?

\medbreak

A probably more interesting but more delicate direction would be an \emph{ergodic variational
principle}. Even its formulation is not completely clear. A possibility would be as follows:

For each dimension $k$, $h^k_\top(f)$ is the supremum of the entropies of $k$-disks contained
in unstable manifolds of points in any set of full measure with respect to all invariant 
probability measures.

\subsection{Dimensional Entropies of Examples}

Let $f_i:M_i\to M_i$ are smooth maps for $i=1,\dots,n$ and consider the following formula:
 $$
    H^k_\top(f_1\times\dots\times f_n) = \max_{\ell_1+\dots+\ell_n=k}
      H^{\ell_1}_\top(f)+\dots+H^{\ell_n}_\top(f).
 $$
This is only known in special cases --see \cite{BurguetPhD}. It would imply that product
of entropy-expanding maps are again entropy-expanding.

\medbreak

If $f:M\to M$ is an expanding map of a compact manifold, is it true that $H^{d-1}_\top(f)<h_\top(f)$.
Note that this fails for piecewise expanding maps (think of a limit set containing an isolated invariant
curve with maximum entropy). 

Likewise is an Anosov diffeomorphism, even far from linear,
entropy-hyperbolic?

\medbreak

Find examples where $h^{k,C^r}_\top(f)<H^{k,C^r}_\top(f)$.

\subsection{Other types of dimensional complexity}

Other "dimensional complexities" have been investigated from growth rates of multi(co)vectors
for the Kozlovski entropy formula \cite{Kozlovski} to the currents which are fundamental to
multidimensional complex dynamics \cite{Gromov2} and the references therein.

How do they relate to the above dimensional entropies?

\subsection{Necessity of Topological Assumptions}

We have seen in Sect. 2.1 that, for maps, the assumption of no zero Lyapunov exponent for the large entropy measure, (or even that these exponents are bounded away from zero) is not enough for our purposes (e.g., finiteness of the number of maximum measures). Such results seem to require more uniform assumptions,
like the one we make on dimensional entropies.

Is it the same for diffeomorphisms? That is, can one find diffeomorphisms  with infinitely many
maximum measures, all with exponents bounded away from zero?

Let $f$ be a $C^r$ self-map of a compact manifold. Assume that there are numbers
$h<h_\top(f)$, $\lambda>0$, such that the Lyapunov exponents of any ergodic
invariant measure with entropy at least $h$ fall outside of $[-\lambda,\lambda]$.
Assume also that the set of invariant probability measures with entropy $\geq h$
is compact. Does it follow that there are only finitely many maximum measures?

\subsection{Entropy-Hyperbolicity}

In a work in progress with T. Fisher, we show that the  condition of Lemma \ref{lem:unif-hh}
is satisfied by a version of a well-known example of robustly transitive, non-uniformly hyperbolic
diffeomorphism of $\TT^4$ due to Bonatti and Viana \cite{BUH}. Building nice
center-stable and center-unstable invariant laminations, we expect be able
to show the same properties as in Theorem \ref{thm-hexp}. 

\medbreak

I however conjecture that the finite number of maximum measures, etc. should in fact
hold for every $C^\infty$ entropy-expanding diffeomorphism, even when there is no such
nice laminations. Of course this contains the case of surface diffeomorphisms which
is still open (see Conjecture \ref{conj:surface}), despite the result on a toy model \cite{BuzziPWAH}.

\subsection{Generalized Entropy-Hyperbolicity}

It would be interesting to have a \emph{more general notion of entropy-hyperbolicity}. For instance,
if a hyperbolic toral automorphism is entropy-hyperbolic, this is not the case for the disjoint
union of two such systems of the same dimenion if they have distinct stable dimensions. It may
be possible to "localize" the definition either near points or near invariant measures to avoid
these stupid obstructions (this is one motivation for the above question on variational
principles for dimensional entropies).

If one could remove such obstructions, the remaining ones could reflect basic dynamical phenomena
opening the door to a speculative "entropic Palis program".

\appendix

\section{$C^r$ sizes}\label{appendix:Cr-size}

We explain how to measure the $C^r$ size of singular disks of a compact manifold $M$
of dimension $d$. Here $1\leq r\leq\infty$.

We select a finite atlas $\mathcal A$ made of charts $\chi_i:U_i\subset\RR^d\to M$ such that
changes of coordinates $\chi_i^{-1}\circ\chi_j$ are $C^r$-diffeomorphisms of open subsets
of $\RR^d$. Then we define, for any singular $k$-disk $\phi$:
 $$
   \| \phi \|_{C^r} := \sup_{x\in M} \inf_{U_i\ni x} \max_{s_1+\dots+s_k\leq r} \max_{1\leq j\leq d}
        |\partial_{t_1}^{s_1}\dots\partial_{t_k}^{s_k}(\pi_j\circ\chi_i\circ\phi)(t_1,\dots,t_k)|
 $$
where the above partial derivatives are computed at $t=\chi_i^{-1}(x)$ and $\pi_j(u_1,\dots,u_d)=u_j$.

\begin{fact}\label{fact:equiv-Cr}
If $\|\cdot\|_{C^r}$ and $\|\cdot\|'_{C^r}$ are two $C^r$ size defined by the above
procedure, there exists a constant $K$ such that, for any $C^r$ $k$-disk $\phi:Q^k\to M$:
 $$
    \|\phi\|_{C^r}\leq K\cdot \|\phi\|'_{C^r}.
 $$
\end{fact}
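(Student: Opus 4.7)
The plan is to exploit the chain rule together with compactness of $M$, reducing the comparison of the two $C^r$ sizes to a uniform control of the finitely many change-of-coordinate maps. First, I would apply the Lebesgue number lemma to the finite open covers $\{\chi_i(U_i)\}_i$ and $\{\chi'_j(U'_j)\}_j$ of $M$ to obtain $\delta>0$ such that every subset of $M$ of diameter less than $\delta$ is contained in some $\chi_i(U_i)$ and in some $\chi'_j(U'_j)$. For each overlapping pair $(i,j)$ the transition map $T_{ij}:=\chi_i^{-1}\circ\chi'_j$ is a $C^r$ diffeomorphism between open sets of $\RR^d$; by restricting it to compact sub-domains selected using $\delta$, its partial derivatives of order $\leq r$ admit a uniform bound $L$ depending only on the two atlases and on $r$.

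Second, fix a singular $k$-disk $\phi:Q^k\to M$ and a point $t\in Q^k$ with $x=\phi(t)$. Given any chart $\chi_i$ with $U_i\ni x$, I would pick, using the Lebesgue number $\delta$, a chart $\chi'_j$ such that a $\delta$-neighbourhood of $x$ lies both in $\chi_i(U_i)$ and in $\chi'_j(U'_j)$. On a neighbourhood of $t$ one then has the factorisation
\[
\chi_i^{-1}\circ\phi \;=\; T_{ij}\circ g,\qquad g:=\chi'_j{}^{-1}\circ\phi.
\]
Applying Fa\`a di Bruno's formula expresses each partial derivative $\partial^s(\pi_\ell\circ T_{ij}\circ g)(t)$ with $|s|\leq r$ as a polynomial combination of the derivatives of $\pi_\ell\circ T_{ij}$ evaluated at $g(t)$ (bounded by $L$) and of the derivatives of $g$ up to order $|s|$ (each bounded by $\|\phi\|'_{C^r}$). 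Taking the maximum over $s$ and $\ell$, then the infimum over admissible $\chi_i$, and finally the supremum over $x\in M$, yields $\|\phi\|_{C^r}\leq K\cdot\|\phi\|'_{C^r}$ for a constant $K$ depending only on the two atlases, $r$, and $d$. The reverse inequality is obtained by swapping the roles of the two atlases.

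The main obstacle will be the bookkeeping in Fa\`a di Bruno: at order $s$ the formula produces terms containing products of up to $s$ derivatives of $g$, so a naive reading gives only a polynomial bound in $\|\phi\|'_{C^r}$. To recover the stated linear inequality one has to collect these contributions and absorb them into $K$, using that the combinatorial coefficients and the transition-map bounds depend only on the atlases (and, in the regime where this Fact is later applied, the $C^r$ size of the disks of interest is bounded above). The remaining steps --- existence of the Lebesgue number, uniform $C^r$ control of the $T_{ij}$'s, and the matching of domains via $\delta$ --- are routine once the finite-atlas setup is fixed.
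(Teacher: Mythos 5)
The paper states Fact~\ref{fact:equiv-Cr} without any proof, so there is no argument in the text to compare yours against; the following assesses the proposal on its own terms. Your strategy --- a Lebesgue number for the two finite covers, uniform $C^r$ bounds on the finitely many transitions $T_{ij}=\chi_i^{-1}\circ\chi'_j$, and Fa\`a di Bruno applied to $\chi_i^{-1}\circ\phi=T_{ij}\circ(\chi_j'^{-1}\circ\phi)$ --- is the natural one, and it is sound except for the point you yourself flag at the end, which is a genuine gap and not mere bookkeeping.

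Fa\`a di Bruno bounds $|\partial^s(T_{ij}\circ g)|$ by a polynomial of degree $s$ in $\max_{s'\le s}|\partial^{s'}g|$, and for $r\ge 2$ this \emph{cannot} be improved to the linear bound claimed: read literally for all disks, the Fact is false. Concretely, take $M=S^1=\RR/2\pi\ZZ$, let $\{\chi_i\}$ be affine charts (transitions are translations) and $\{\chi'_j\}$ be charts of the form $u\mapsto c_j+u+u^2/4$, and let $\phi(t)=At\bmod 2\pi$ with $A$ large. Then $\|\phi\|_{C^2}=A$, since the affine local representatives have vanishing second derivative; but a local representative $g$ in the primed atlas satisfies $g'(1+g/2)=A$, hence $g''=-\tfrac12 A^2/(1+g/2)^3$, so $\|\phi\|'_{C^2}\asymp A^2$ and no constant $K$ can give $\|\phi\|'_{C^2}\le K\|\phi\|_{C^2}$ uniformly in $A$. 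What your chain-rule argument actually establishes is the bound $\|\phi\|_{C^r}\le K\max\bigl(\|\phi\|'_{C^r},(\|\phi\|'_{C^r})^r\bigr)$, which reduces to the linear form only when $\|\phi\|'_{C^r}\le1$. That restricted statement is exactly what is needed --- the Fact is invoked only in the proof that $H^{k,C^r}_\top(f)$ is independent of the $C^r$ size, where every disk in sight lies in the standard uniform family $D^k_r$ (size $\le1$) and linear subdivision only shrinks the size --- so you should promote your parenthetical remark into an explicit hypothesis rather than gesture at absorbing the higher-order terms into $K$, which is not possible in general.
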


\begin{fact}\label{fact:Cinfty-approx}
Let $\phi:Q^k\to M$ be a $C^r$ disk for some finite $r\geq1$. Then, for any $t_0\in Q^k$,
there exists a $C^\infty$ approximation $\phi_\infty:Q^k\to M$ such that:
 $$
   \forall t\in Q^k\quad d(\phi_\infty(t),\phi(t)) \leq \|\phi\|_{C^r} \|t-t_0\|^r.
 $$
and $\|\phi_\infty\|_{C^\infty}\leq 2 \|\phi\|_{C^r}$.
\end{fact}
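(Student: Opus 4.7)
\medskip

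\noindent{\it Proof proposal.} The plan is to take $\phi_\infty$ to be essentially the degree-$(r-1)$ Taylor polynomial of $\phi$ at $t_0$, carried into $M$ through a chart and smoothly truncated so that it lands in $M$. The Taylor polynomial is tailor-made for the required error estimate, and being a polynomial it is automatically $C^\infty$ with derivatives of order $\geq r$ vanishing identically, which is what makes the $C^\infty$ size finite.

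First I would fix a chart $\chi:U\subset\RR^d\to M$ with $\phi(t_0)\in\chi(U)$ and set $\tilde\phi:=\chi^{-1}\circ\phi$ on the (open) preimage of $\chi(U)$ containing $t_0$. In $\RR^d$ define
$$
  P(t) \;:=\; \sum_{|s|\leq r-1}\frac{1}{s!}\,\partial^s\tilde\phi(t_0)\,(t-t_0)^s.
$$
Taylor's integral remainder gives $|\tilde\phi(t)-P(t)|\leq \frac{1}{r!}\|\phi\|_{C^r}\|t-t_0\|^r$; pulled back to $M$ via $\chi$ (bi-Lipschitz on a bounded set) this yields the required distance bound, using Fact \ref{fact:equiv-Cr} to absorb the distortion of $\chi$ into the constant. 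For the $C^\infty$ size: since $\deg P\leq r-1$, derivatives of order $\geq r$ are zero; the derivative of order $|\alpha|=r-1$ is the constant $\partial^\alpha\tilde\phi(t_0)$ (bounded by $\|\phi\|_{C^r}$); and lower-order derivatives are polynomials in $(t-t_0)$ whose coefficients are partial derivatives of $\tilde\phi$ at $t_0$.

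Second I need to convert $P$, which only takes values in $\RR^d$ and may exit $\chi(U)$, into a map $\phi_\infty:Q^k\to M$. Introduce a smooth cutoff $\rho$ equal to $1$ on $\{\|t-t_0\|\leq\delta/2\}$ and supported in $\{\|t-t_0\|\leq\delta\}$, with $\delta$ chosen small enough that $\tilde\phi(t_0)+\rho(t)[P(t)-\tilde\phi(t_0)]$ remains in $U$. Set
$$
  \phi_\infty(t)\;:=\;\chi\bigl(\tilde\phi(t_0)+\rho(t)[P(t)-\tilde\phi(t_0)]\bigr)
$$
on the $\delta$-ball and $\phi_\infty(t):=\phi(t_0)$ elsewhere. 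Inside $\{\|t-t_0\|\leq\delta/2\}$ we have $\phi_\infty=\chi\circ P$ and the Taylor estimate applies directly; outside this ball, $\|t-t_0\|^r\geq (\delta/2)^r$ is uniformly bounded below, so both the cutoff transition and the constant extension fit inside the slack $\|\phi\|_{C^r}\|t-t_0\|^r$ once $\delta$ is taken small enough relative to $\diam(M)/\|\phi\|_{C^r}$.

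Third, the $C^\infty$ bound: derivatives of $\phi_\infty$ are polynomial combinations of derivatives of $\rho$ (of size $\delta^{-|\alpha|}$) and derivatives of $P-\tilde\phi(t_0)$ (vanishing at $t_0$ to order $1$, hence of size $\delta^{1-|\alpha|}\|\phi\|_{C^r}$ up to order $r-1$, and identically zero above). Each product is therefore uniformly bounded by a constant multiple of $\|\phi\|_{C^r}$; by shrinking the chart around $\phi(t_0)$ so that the metric distortion of $\chi$ is arbitrarily close to~$1$, and by taking $\delta$ small so that the Taylor polynomial varies only slightly over its support, one brings this constant down to the claimed~$2$. The main obstacle is precisely this last optimization: the cutoff's derivatives and the polynomial growth of $P$ on $Q^k$ pull in opposite directions, and the factor~$2$ is obtained by localizing the construction to a neighborhood of $t_0$ small enough that the higher-order Taylor coefficients contribute strictly less than $\|\phi\|_{C^r}$, while the whole of $Q^k$ is handled using the generous slack $\|t-t_0\|^r$ away from $t_0$.
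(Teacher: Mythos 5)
Your overall strategy — Taylor expansion of $\phi$ at $t_0$ in a chart, followed by some truncation to make it a globally defined $C^\infty$ disk — is exactly the route the paper sketches, and the choice of the degree-$(r-1)$ polynomial is the right one because it gives the Lagrange remainder bound $\frac1{r!}\|\phi\|_{C^r}\|t-t_0\|^r$. The difficulties are all in the truncation step, and there your estimates contain two genuine errors. First, the scaling of the derivatives of $P-\tilde\phi(t_0)$ is not what you claim. The function $P-\tilde\phi(t_0)$ does vanish to order $1$ at $t_0$, but that only controls its \emph{value}: $|P(t)-\tilde\phi(t_0)|\lesssim\|\phi\|_{C^r}\delta$ on $\{\|t-t_0\|\leq\delta\}$. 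Its $j$-th derivative, $j\geq1$, has constant term $D^j\tilde\phi(t_0)$ and is therefore of size comparable to $\|\phi\|_{C^r}$, \emph{uniformly in $\delta$} — not $\delta^{1-j}\|\phi\|_{C^r}$ as you assert. With the Leibniz rule this means the term $D^\alpha\rho\cdot(P-\tilde\phi(t_0))$ is of order $\delta^{-|\alpha|}\cdot\|\phi\|_{C^r}\delta=\delta^{1-|\alpha|}\|\phi\|_{C^r}$, which blows up as $\delta\to0$ once $|\alpha|\geq2$; notice that even your own (incorrect) scaling would produce the same uncontrolled factor $\delta^{1-|\alpha|}$, so the conclusion that each product is $O(\|\phi\|_{C^r})$ does not follow from what precedes it. Shrinking $\delta$ therefore does not help the $C^\infty$ bound; it destroys it.

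The second error is in the constant extension. You argue that, outside the $\delta$-ball, the slack $\|\phi\|_{C^r}\|t-t_0\|^r\geq\|\phi\|_{C^r}(\delta/2)^r$ absorbs the transition and the constant extension ``once $\delta$ is taken small enough relative to $\diam(M)/\|\phi\|_{C^r}$.'' This is backwards: taking $\delta$ small makes $\|\phi\|_{C^r}(\delta/2)^r$ small, i.e.\ it shrinks the available slack. In fact the constant extension $\phi_\infty\equiv\phi(t_0)$ satisfies the distance bound exactly when $\|t-t_0\|^{r-1}\geq\|\phi\|_{C^1}/\|\phi\|_{C^r}$, which is guaranteed for $\|t-t_0\|\geq1$ but can fail for $\delta\leq\|t-t_0\|<1$ when $\delta$ is small and $\|\phi\|_{C^1}$ is close to $\|\phi\|_{C^r}$. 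So the cutoff radius must be of order $1$, not small; but then one must separately ensure that the chart is large enough to contain the Taylor polynomial up to that radius, which is the real technical point that your construction leaves open. What is missing, in short, is a correct interpolation: the Taylor polynomial wins only where the chart allows, the constant wins only where $\|t-t_0\|\geq1$, and neither of your two choices of $\delta$ (small enough to control the chart, large enough to control the constant extension) can simultaneously hold without a further argument.
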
 

This is easily shown by considering a neighborhood of $\phi(t_0)$ contained in a single
chart of $\mathcal A$ and approximating $\phi$ by its Taylor expansion in that chart.

\end{document}